\newtheorem{theorem}{Theorem}[section]
\newtheorem{lemma}[theorem]{Lemma}
\newtheorem{proposition}[theorem]{Proposition}
\newtheorem{corollary}[theorem]{Corollary}
\theoremstyle{definition}
\newtheorem{definition}[theorem]{Definition}
\newcommand{\ol}[1]{\overline{#1}}
\renewcommand{\v}{\check}
\newcommand{\h}{\hat}
\renewcommand{\d}{\check}
\renewcommand{\u}{\hat}
\renewcommand{\t}{\tilde}
\newcommand{\vh}[1]{\v{\h #1}}
\newcommand{\hv}[1]{\h{\v #1}}
\newcommand{\du}[1]{\d{\u #1}}
\newcommand{\ud}[1]{\u{\d #1}}
\newcommand{\op}{\oplus}
\DeclareRobustCommand{\cev}[1]{%
  {\mathpalette\do@cev{#1}}%
}
\newcommand{\do@cev}[2]{%
  \vbox{\offinterlineskip
    \sbox\z@{$\m@th#1 x$}%
    \ialign{##\cr
      \hidewidth\reflectbox{$\m@th#1\vec{}\mkern4mu$}\hidewidth\cr
      \noalign{\kern-\ht\z@}
      $\m@th#1#2$\cr
    }%
  }%
}
\journal{}
\begin{document}

\begin{frontmatter}

\title{The Game of Arrows on 3-legged spider graphs}

\author{Bryant G. Mathews}
\fnref{fn1}
\ead{bmathews@apu.edu}

\fntext[fn1]{This research did not receive any specific grant from funding agencies in the public, commercial, or not-for-profit sectors. Declarations of interest: none.}

\address{Department of Mathematics, Physics, and Statistics\\
Azusa Pacific University \\
701 E. Foothill Blvd., Azusa, CA 91702 USA}

%\date{\today}
%\dedicatory{}
%    Abstract is required.
\begin{abstract}
The Game of Cycles is a combinatorial game introduced by Francis Su in 2020 in which players take turns marking arrows on the edges of a simple plane graph, avoiding the creation of sinks and sources and seeking to complete a ``cycle cell.''  Su and his collaborators (2021) found winning strategies on graphs with certain types of symmetry using reverse mirroring.   

In this paper, we for the first time determine the winning player in the Game of Cycles on an infinite family of graphs lacking symmetry.  In particular, we use the Sprague-Grundy Theorem to show that player two has a winning strategy for the Game of Cycles on any $3$-legged spider graph with legs of odd length. Because the cycle cell victory condition is extraneous for tree graphs (including spiders), we drop it from the rules and call the result the Game of Arrows.  Our proof leans heavily on a notion of state isomorphism that allows us to decompose a game state into states of smaller pieces of a graph, leading to nim-sum calculations with Grundy values.     
\end{abstract}

\begin{keyword}
combinatorial game theory \sep 
Game of Cycles \sep 
Sprague-Grundy Theorem \sep 
graph theory \sep 
spider graphs
\MSC[2020] 91A46 \sep 05C57
\end{keyword}

\end{frontmatter}
%\linenumbers

%\maketitle
%    Text of article.
\section{Introduction}
In Francis Su's Game of Cycles \cite{alvarado2020,su2020}, two players take turns marking arrows on edges of a simple plane graph, avoiding the creation of sinks and sources.  We will assume that the graph has no isolated vertices, but we do not require it to be connected.  The winner is the first player to complete a ``cycle cell'' (a cycle around a bounded face of the graph) \emph{or} the last player to mark an arrow. Previous work on the Game of Cycles has avoided tree graphs, which admit no cycles.  Nevertheless, the game can be played on a tree, and many game positions on non-trees are equivalent to game positions on trees or forests.  For example, the game position on the graph to the left in Figure \ref{fig:equiv_positions} is equivalent to the game position on the tree to the right.  

\tikzstyle{every node}=[circle, draw, fill=black, inner sep=0pt, minimum width=3pt, label distance=.7mm]
\begin{figure}[H]
\centering
\begin{subfigure}{.3\textwidth}  
\centering
\begin{tikzpicture}[scale=.4]
\draw
{
(270:3) node{} -- (-54:3) node{} -- (-18:3) node{} -- (18:3) node{} -- (54:3) node{} -- (90:3) node{} -- (180:3) node{} -- cycle
(0,-1) node{} -- (0,1) node{}
};
\begin{scope}[decoration={markings, 
    mark=at position 0.58 with {\arrow{Straight Barb[scale=1.5]}}}
    ] 
\draw[postaction={decorate}]{(0,-1) node{} -- (0,-3)};
\draw[postaction={decorate}]{(0,1) node{} -- (0,3)};
\draw[postaction={decorate}]{(18:3) node{} -- (54:3)};
\end{scope}
\end{tikzpicture}
\end{subfigure}\hspace{0.5cm}%
\begin{subfigure}{.6\textwidth}
\centering
\begin{tikzpicture}[scale=0.7]
\draw \foreach \x in {1,2,3,4,5,6,7,8,9,10}
{
(\x,0) node{} -- (\x+1,0) node{}
};
\draw
{
(4,1) -- (4,2) node{}
(6,1) -- (6,2) node{}
};
\begin{scope}[decoration={markings, 
    mark=at position 0.58 with {\arrow{Straight Barb[scale=1.5]}}}
    ] 
\draw[postaction={decorate}]{(6,1) node{} -- (6,0)};
\draw[postaction={decorate}]{(4,1) node{} -- (4,0)};
\draw[postaction={decorate}]{(2,0) node{} -- (3,0)};
\draw[postaction={decorate}]{(9,0) node{} -- (10,0)};
\end{scope}
\end{tikzpicture}
\end{subfigure}
    \caption{Equivalent game positions in the Game of Cycles.}
    \label{fig:equiv_positions}
\end{figure}

The Game of Cycles is easily solved for trees of maximum degree at most two, which are just path graphs.  Player two has a winning strategy on a path graph if and only if the number of edges is even.  Trees with exactly one vertex of degree greater than two are called spider graphs.  We call a spider graph \emph{$n$-legged} if its maximum degree is $n$.  In this paper, we show that player two has a winning strategy for the Game of Cycles on any $3$-legged spider graph with legs of odd length.  

Because our particular interest is in trees, we can drop the cycle cell victory condition from the game rules and forget the particular embedding of the graph in the plane.  The resulting game, which we call the \emph{Game of Arrows}, can be played on any simple graph without isolated vertices, whether planar or not.  The Game of Arrows on a tree is of course equivalent to the Game of Cycles on any planar embedding of that tree.   

Note that with the usual sink/source rule, it is forbidden to mark a leaf edge.  In order to simplify calculations, we apply a ``trimming'' operation to graphs in such a way that the Game of Arrows on a graph $G$ is equivalent to the ``Trimmed Game of Arrows'' on the trimmed graph $T(G)$, with sinks and sources now permitted at leaves.  For a spider graph with legs of length at least two, trimming simply reduces the length of each leg by one.  Our analysis will therefore focus on the Trimmed Game of Arrows on 3-legged spider graphs with legs of even length.  

Section 2 introduces terminology and notation, and Section 3 explains our ``graph trimming'' operation.  After reviewing the definition and properties of Grundy values, Section 4 defines our notion of state isomorphism and explains how to verify it locally, one vertex at a time.  Finally, Sections 5--12 apply a variety of techniques to progressively compute Grundy values of states of spider graphs with fewer and fewer marks, eventually arriving at the Grundy value of the empty state.  

In what follows, we assume that all graphs are simple, with no isolated vertices.  

\section{States, Followers, and Descendents}
After defining ``arrows'' and ``decorations'' of a graph $G$, we introduce ``states'' of $G$ to represent possible positions in the Trimmed Game of Arrows on $G$.  Successive possible positions in the game are represented by the ``followers'' and ``descendents'' of a state.  

\begin{definition}
The \emph{set of arrows} of a graph $G$ is the set $A(G)=\{(v,w)\mid \{v,w\}\in E(G)\}$ of arrows that can be drawn on the edges of $G$. 

A \emph{decoration} $X$ of a graph $G$ is a subset of $A(G)$ such that if $(v,w)\in X$, then $(w,v)\notin X$ (at most one mark per edge).  
\begin{itemize}
    \item A vertex $w$ of $G$ is a \emph{sink} of a decoration $X$ if $\{v\mid (v,w)\in X\}=  \{v\mid \{v,w\}\in E(G)\}$.  
    \item A vertex $v$ of $G$ is a \emph{source} of a decoration $X$ if $\{w\mid (v,w)\in X\}= \{w\mid \{v,w\}\in E(G)\}$.
\end{itemize}  
These definitions are illustrated in Figure \ref{fig:sink_source}.

\begin{figure}[H]
\centering
\begin{tikzpicture}[scale=1]
\draw{
(0,0)node[label={[label distance=.5mm]86:$w$}]{} 
-- (1,0)node[label={[label distance=.5mm]above:$v$}]{} 
-- (2,0)node{} 
-- (3,0)node{}
(0,0) 
-- (150:1)node{} 
-- (210:1)node{} 
-- cycle
};
\begin{scope}[decoration={markings, 
    mark=at position 0.58 with {\arrow{Straight Barb[scale=1.5]}}}
    ] 
\draw[postaction={decorate}]
{(150:1) -- (0,0)};
\draw[postaction={decorate}]
{(210:1) -- (0,0)};
\draw[postaction={decorate}]
{(1,0) -- (0,0)};
\draw[postaction={decorate}]
{(1,0) -- (2,0)};
\end{scope}
\end{tikzpicture}
    \caption{A decoration $X$ of a graph $G$, with a sink at $w$ and a source at $v$.}
    \label{fig:sink_source}
\end{figure}

A \emph{leaf} of a graph is a vertex of degree $1$. Vertices which are not leaves are called \emph{internal vertices}. We denote the sets of leaves and internal vertices of a graph $G$ by $L(G)$ and $\operatorname{Int}(G)$, respectively. A sink or source of $X$ is \emph{internal} if it is an internal vertex of $G$. 
\end{definition}

\begin{definition}
A decoration $X$ of a graph $G$ is called a \emph{state} of $G$ if it has no internal sinks or sources.  
\end{definition}

\begin{definition}
A \emph{follower} of a state $X$ of a graph $G$ is a state $X\cup \{(v,w)\}$ of $G$ such that $(v,w)\notin X$.  We denote the set of all followers of the state $X$ by $\mathcal{F}_G(X)$, or by $\mathcal{F}(X)$ when the graph is clear from the context.  We extend this notation to sets of states as follows:  $$\mathcal{F}(\{X_1, X_2, \ldots,X_k\})=\mathcal{F}(X_1)\cup \mathcal{F}(X_2)\cup \cdots \cup \mathcal{F}(X_k).$$ Then, for example, $\mathcal{F}^2(X)=\mathcal{F}(\mathcal{F}(X))$ is the set of all followers of followers of $X$.  A state with no followers is called \emph{terminal}. A \emph{descendent} of a state $X$ is an element of $\mathcal{F}^n(X)$ for some positive integer $n$.  
The set of all descendents of $X$ is denoted by $\mathcal{F}^\bullet(X)$.
\end{definition}

When we view a state $X$ of a graph $G$ as a position in the Trimmed Game of Arrows on $G$, the game tree of this position has vertex set $\{X\}\cup \mathcal{F}^\bullet(X)$ and directed edges of the form $(X',X'')$, where $X'\in \{X\}\cup \mathcal{F}^\bullet(X)$ and $X''\in\mathcal{F}(X')$.

\section{Dormant States and Graph Trimming}

We prove that the Game of Arrows on a graph $G$ is equivalent to the Trimmed Game of Arrows on the ``trimming'' of $G$.  This will justify our focus in the remainder of the paper on the Trimmed Game of Arrows.  

We have already introduced states to represent possible positions in the Trimmed Game of Arrows.  We now define ``dormant'' states to represent possible positions in the Game of Arrows.   

\begin{definition}
A state $X$ of a graph $G$ is \emph{dormant} if it has no sinks or sources at the leaves of $G$ (and therefore no sinks or sources at all).  We write $\mathcal{DF}(X)$ for the set of dormant followers of $X$ and $\mathcal{DF}^\bullet(X)$ for the set of dormant descendents of $X$.  

When we view a dormant state $X$ of a graph $G$ as a position in the Game of Arrows on $G$, the game tree of this position has vertex set $\{X\}\cup \mathcal{DF}^\bullet(X)$, and directed edges of the form $(X',X'')$, where $X'\in \{X\}\cup \mathcal{DF}^\bullet(X)$ and $X''\in\mathcal{DF}(X')$.
\end{definition}

``Trimming'' a graph $G$ involves first removing the leaves and leaf edges, and then ``splitting apart'' the resulting graph at each of the vertices that were adjacent to the leaves of $G$.  We formalize this notion of splitting apart in the following definition.  

\begin{definition}
Suppose $G$ is a graph and $K$ is a set of vertices of $G$.  We define the \emph{ramification of $G$ at $K$}, denoted $R_K(G)$, to be the graph with vertex set
$$(V(G)\setminus K)\cup \{a_x\mid a\in K\text{ and } \{a,x\}\in E(G)\}$$ and edge set 
\begin{align*}
    E(G-K)& \cup \{\{a_x,x\}\mid a\in K, x\notin K \text{ and }\{a,x\}\in E(G)\}\\
    & \cup \{\{a_b,b_a\}\mid a, b\in K\text{ and } \{a,b\}\in E(G)\},
\end{align*}
where $G-K$ is the subgraph of $G$ induced by $V(G)\setminus K$.  For example, a graph $G$ and its ramification at the vertex set $K=\{a,b\}$ are illustrated in Figure \ref{fig:ramification}.  

\begin{figure}[H]
\centering
\begin{subfigure}{.4\textwidth}  
\centering
\begin{tikzpicture}[scale=.6]
\draw
{
(-2,0) node[label=below:$x$]{} -- (0,0) node[label=below:$y$]{} -- (2,0) node[label=below:$z$]{}
(-2,0) -- (-1,2) node[label=above:$a$]{} -- (0,0) -- (1,2) node[label=above:$b$]{} -- (2,0)
(-1,2) -- (1,2)
};
\end{tikzpicture}
\end{subfigure}\hspace{1cm}%
\begin{subfigure}{.4\textwidth}
\centering
\begin{tikzpicture}[scale=.6]
\draw
{
(-2,0) node[label=below:$x$]{} -- (0,0) node[label=below:$y$]{} -- (2,0) node[label=below:$z$]{}
(-3,2) node[label={[label distance=.2mm]135:$a_x$}]{} -- (-2,0) 
(-1,2) node[label={[label distance=.2mm]135:$a_y$}]{} -- (0,0)
(1,2) node[label={[label distance=0mm]45:$b_y$}]{} -- (0,0)
(3,2) node[label={[label distance=0mm]45:$b_z$}]{} -- (2,0)
(-1,3.4) node[label={[label distance=0.3mm]left:$a_b$}]{} -- (1,3.4) node[label={[label distance=0.2mm]right:$b_a$}]{}
};
\end{tikzpicture}
\end{subfigure}
    \caption{A graph $G$ and its ramification $R_K(G)$ at $K=\{a,b\}$.}
    \label{fig:ramification}
\end{figure}

There is a natural graph homomorphism from $R_K(G)$ to $G$ which is the identity on $V(G)\setminus K$ and drops the subscript from any vertex of the form $a_x$.  This homomorphism induces a bijection between the sets of arrows of the two graphs.  
\end{definition}

\begin{definition}
Let $G$ be a graph, let $G'$ be the subgraph of $G$ induced by $\mathrm{Int}(G)$, and let $K$ be the set of internal vertices of $G$ adjacent to leaves of $G$.  We define the \emph{trimming of $G$} to be the ramification of $G'$ at $K$. We denote the trimming of $G$ by $T(G)$.  Note that when $G$ has no leaves, $T(G)=G$.  Furthermore, the trimming of a graph has no isolated vertices.
\end{definition}

The surjective homomorphism $e: T(G)=R_K(G')\to G'$ that drops all subscripts from the names of the vertices of $T(G)$ induces a bijection between the sets of arrows of $T(G)$ and $G'$, which in turn gives a bijection $e^+$ between decorations of $T(G)$ and decorations of $G'$.  The next theorem shows that this bijection on decorations restricts to a bijection between states of $T(G)$ and dormant states of $G$.  As an example, Figure \ref{fig:trimming_bijection} displays the dormant state $e^+(X)$ of the graph $G$, together with the corresponding state $X$ of the trimming $T(G)$.    

\begin{figure}[H]
\centering
\begin{subfigure}{.4\textwidth}  
\centering
\begin{tikzpicture}[scale=.9]
\begin{scope}[decoration={markings, 
    mark=at position 0.58 with {\arrow{Straight Barb[scale=1.5]}}}
    ] 
\draw
{
(0:1) node[label={[label distance=.3mm]30:$z$}]{} -- (60:1) node[label={[label distance=.7mm]15:$c$}]{} -- (120:1) node[label={[label distance=.4mm]165:$b$}]{} -- (180:1) node[label={[label distance=.5mm]260:$w$}]{} -- (240:1) node[label={[label distance=.7mm]260:$x$}]{} -- (300:1) node[label={[label distance=.7mm]270:$y$}]{} -- cycle
(-2.5,{sqrt(3)/2}) node{} -- (-2,0) node[label={[label distance=.4mm]left:$a$}]{}-- (-1,0)
(-2.5,{-sqrt(3)/2}) node{} -- (-2,0)
};
\draw[postaction={decorate}]
{(120:1) -- (180:1)};
\draw[postaction={decorate}]
{(120:1) -- (60:1)};
\draw[postaction={decorate}]
{(1,0) -- (330:{sqrt(3)})};
\draw[postaction={decorate}]
{(300:1) -- (330:{sqrt(3)})};
\draw
{
(-1/2,{sqrt(3)/2+1}) node{} -- (-1/2,{sqrt(3)/2})
(1/2,{sqrt(3)/2+1}) node{} -- (1/2,{sqrt(3)/2})
(1,0) -- (330:{sqrt(3)}) -- (300:1)
(330:{sqrt(3)}) node[label={[label distance=.3mm]60:$d$}]{} -- (330:{sqrt(3)+1}) node{}
}
;
\end{scope}
\end{tikzpicture}
%\caption*{$G$}
\end{subfigure}\hspace{1cm}%
\begin{subfigure}{.4\textwidth}
\centering
\begin{tikzpicture}[scale=.9]
\draw
{
(0:1) node[label={[label distance=.4mm]80:$z$}]{} -- 
(60:1) node[label={[label distance=.4mm]60:$c_z$}]{}  
(120:1) node[label={[label distance=.4mm]120:$b_w$}]{} -- 
(180:1) node[label={[label distance=.6mm]260:$w$}]{} -- 
(240:1) node[label={[label distance=.7mm]260:$x$}]{} -- 
(300:1) node[label={[label distance=.4mm]0:$y$}]{} 
-- (0:1)
(-2,0) node[label={[label distance=.2mm]left:$a_w$}]{} 
-- (-1,0)
(-1/2,{sqrt(3)/2+1}) node[label={[label distance=0mm]180:$b_c$}]{} 
-- (1/2,{sqrt(3)/2+1}) node[label={[label distance=.5mm]0:$c_b$}]{}
(1,0) -- (2,0) node[label={[label distance=.2mm]0:$d_z$}]{}
(300:1) -- (300:2) node[label={[label distance=.2mm]300:$d_y$}]{}
};
\begin{scope}[decoration={markings, 
    mark=at position 0.58 with {\arrow{Straight Barb[scale=1.5]}}}
    ]    
\draw[postaction={decorate}]
{(120:1) -- (180:1)};
\draw[postaction={decorate}]
{(-1/2,{sqrt(3)/2+1}) -- (1/2,{sqrt(3)/2+1})};
\draw[postaction={decorate}]
{(1,0) -- (2,0)};
\draw[postaction={decorate}]
{(300:1) -- (300:2)};
\end{scope}
\end{tikzpicture}
%\caption*{$T(G)$}
\end{subfigure}
    \caption{A dormant state $e^+(X)$ of a graph $G$ and the corresponding state $X$ of $T(G)$.}
    \label{fig:trimming_bijection}
\end{figure}

\begin{theorem}\label{equiv_thm}
The Game of Arrows on a graph $G$ is equivalent to the Trimmed Game of Arrows on $T(G)$.  
\end{theorem}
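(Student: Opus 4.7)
The plan is to exhibit an isomorphism of game trees induced by the arrow-level bijection between $T(G)$ and $G'$. First I would establish the position-level correspondence: the decoration bijection $e^+$ restricts to a bijection between states of $T(G)$ and dormant states of $G$. Then I would show this bijection sends followers to followers, so that the game trees themselves are isomorphic. Since both games declare the last player to move the winner, this yields equivalence.

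For the position-level step, I would analyze the structure of $T(G)=R_K(G')$. Every new vertex of the form $a_x$ or $a_b$ introduced by ramification has degree one in $T(G)$, so the internal vertices of $T(G)$ are exactly those in $V(G')\setminus K$. Since $K$ consists of the internal vertices of $G$ adjacent to at least one leaf of $G$, any $v\in V(G')\setminus K$ has no leaf neighbors in $G$, so its $G$-neighborhood coincides with its $G'$-neighborhood (with $K$-neighbors renamed in $T(G)$). On the $G$ side, a dormant state has no sinks or sources at leaves of $G$, hence no marks on leaf edges; moreover, any $a\in K$ has an unmarked leaf edge in any dormant state and therefore cannot be a sink or source. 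Putting these together, dormant states of $G$ correspond bijectively (via restriction to non-leaf edges) to decorations of $G'$ having no sinks or sources at vertices of $V(G')\setminus K$, and states of $T(G)$ correspond (via $e^+$) to decorations of $G'$ satisfying exactly the same condition.

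For the move-level step, I would observe that $e^+$ is induced by the bijection between $A(T(G))$ and $A(G')$, so adding an arrow on either side is the same operation under the correspondence. Legality transfers cleanly: in both games, the resulting decoration is legal iff no sink or source was created at a vertex of $V(G')\setminus K$. In particular, marking an edge of $T(G)$ incident to one of the new leaves $a_x$ or $a_b$ is legal in the Trimmed Game because those are leaves, and it corresponds to marking an edge incident to a vertex $a\in K$ in $G$, which is legal there because $a$ still has an unmarked leaf edge to guarantee it is neither sink nor source. Hence $e^+$ intertwines the follower relations of the two games, terminal positions correspond to terminal positions, and the induced bijection between $\{X\}\cup\mathcal{F}^\bullet(X)$ and $\{e^+(X)\}\cup\mathcal{DF}^\bullet(e^+(X))$ is an isomorphism of game trees.

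The main obstacle is the case-by-case degree bookkeeping needed to make the position-level step airtight: tracking how the sink/source condition behaves at each vertex type (leaves of $G$, vertices of $K$, non-$K$ internal vertices of $G$, and the new subscripted vertices of $T(G)$), and verifying that the arrow bijection preserves head/tail assignments at every $v\in V(G')\setminus K$. Once that bookkeeping is done, the isomorphism of game trees, and therefore the equivalence of the two games, follows immediately.
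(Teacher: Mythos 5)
Your proposal is correct and follows essentially the same route as the paper: show that $e^+$ restricts to a bijection between states of $T(G)$ and dormant states of $G$, observe that it respects the follower relation, and conclude that the game trees are isomorphic. Your intermediate characterization of both sides as decorations of $G'$ with no sinks or sources at vertices of $V(G')\setminus K$ just makes explicit the bookkeeping that the paper's ``without loss of generality'' reduction leaves implicit.
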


\begin{proof}
Let $G'$, $K$, $e$, and $e^+$ be defined as above.  We will show that $e^+$ restricts to a bijection between states of $T(G)$ and dormant states of $G$. This bijection respects the inclusion relation on states, so it will restrict further to a bijection between the descendents of a state $X$ of $T(G)$ and the dormant descendents of $e^+(X)$. This bijection respects the follower relation, so it will induce an isomorphism between the game trees of the Trimmed Game of Arrows on $T(G)$ and the Game of Cycles on $G$.  

Suppose $X$ is a decoration of $T(G)$ such that $e^+(X)$ is not a dormant state of $G$.  Because $e^+(X)$ is a decoration of $G'$, we may assume, without loss of generality, that $e^+(X)$ has a sink at some vertex $x\in \mathrm{Int}(G)\setminus K\subset V(T(G))$.  That is, for every vertex $v$ of $G$ adjacent to $x$, $(v,x)\in e^+(X)$.  In particular, for any vertex $u$ of $T(G)$ adjacent to $x$, $(e(u),x)\in e^+(X)$.  This implies that $(u,x)\in X$, since $e$ induces a bijection between the sets of arrows of $T(G)$ and $G'$.  Therefore, $X$ has a sink at $x$, so $X$ is not a state of $T(G)$.  

Next, we prove the converse.  Suppose $X$ is not a state of $T(G)$.  We may assume, without loss of generality, that $X$ has a sink at some vertex $x\in \mathrm{Int}(T(G))=\mathrm{Int}(G)\setminus K$.  Let $v$ be any vertex of $G$ adjacent to $x$.  Then $v\in \mathrm{Int}(G)$, so there exists some vertex $u$ of $T(G)$ adjacent to $x$ for which $e(u)=v$.  Because $(u,x)\in X$, it follows that $(v,x)\in e^+(X)$.  This implies that $e^+(X)$ has a sink at $x$, so $e^+(X)$ is not a dormant state.
\end{proof}

It is worth noting that every graph $H$ is the trimming of some graph $G$.  We can take the vertex set of $G$ to be $V(H)\cup \{x'\mid x\in L(G)\}$ and the edge set of $G$ to be $E(H)\cup \{\{x',x\}\mid x\in L (G)\}$.  This means that solving the Trimmed Game of Arrows for every graph is equivalent to solving the Game of Arrows for every graph.  Figure \ref{fig:trimming_inverse} shows a graph $H$ together with the graph $G$ constructed as described above.  

\begin{figure}[H]
\centering
\begin{subfigure}{.4\textwidth}  
\centering
\begin{tikzpicture}[scale=.9]
\draw
{
(0,0) node{} -- (1,0) node{} -- (2,0) node{} -- (3,0) node{} -- (4,0) node[label={[label distance=.4mm]0:$y$}]{}
(0,0) -- (0,1) node{} -- (1,1) node{} -- (1,0) 
(2,0) -- (2,1) node[label={[label distance=.5mm]above:$x$}]{}
};
\end{tikzpicture}
\end{subfigure}\hspace{1cm}%
\begin{subfigure}{.4\textwidth}
\centering
\begin{tikzpicture}[scale=.9]
\draw
{
(0,0) node{} -- (1,0) node{} -- (2,0) node{} -- (3,0) node{} -- (4,0) node[label={[label distance=.4mm]above:$y$}]{} -- (5,0) node[label={[label distance=.0mm]right:$y'$}]{}
(0,0) -- (0,1) node{} -- (1,1) node{} -- (1,0) 
(2,0) -- (2,1) node[label={[label distance=.5mm]right:$x$}]{} -- (2,2) node[label={[label distance=.5mm]89:$x'$}]{}
};
\end{tikzpicture}
\end{subfigure}
    \caption{A graph $H$ together with a graph $G$ constructed so that $T(G)=H$.}
    \label{fig:trimming_inverse}
\end{figure}

Note also that the trimming of a forest is a forest, and every forest is the trimming of some forest.  Therefore, solving the Game of Cycles for all forests is equivalent to solving the Trimmed Game of Arrows for all forests.  

\section{Grundy Values and State Isomorphisms}

The Sprague-Grundy Theorem states that any perfect-information, impartial, finite two-player game in which the last player to make a move wins is equivalent to a game of Nim with a single heap of a certain size, known as the Grundy value of the game \cite{gr1939,sp1936}.  When the Grundy value is nonzero, the first player has a winning strategy, and when the Grundy value is zero, the second player has a winning strategy.  

Applying this result to each position in the Trimmed Game of Arrows on a graph, we can think of each state of the graph as having its own Grundy value.  The ``Sprague-Grundy function'' of a graph then computes the Grundy values of the graph's states recursively, working backwards from terminal states towards those with fewer marks (see \cite{demaine2009} or \cite{schleicher2006} for a recent exposition).  Ultimately, we aim to compute the Grundy value of the empty state of a graph in order to determine which player has a winning strategy on that graph.

Grundy values behave well with respect to disjunctive sums of games (disjoint unions of states).  In order to take advantage of this fact to compute Grundy values, we need a way of decomposing a state of a graph into a disjoint union of states of smaller pieces of the original graph.  To serve this purpose, we introduce the notion of state isomorphism and show how to verify these isomorphisms locally, one vertex at a time.  

\begin{definition}
The \emph{Sprague-Grundy function} $g$ of a graph $G$ is a function from the set of all states of $G$ to the nonnegative integers assigning a \emph{Grundy value} to each state $X$.  The function $g$ is zero on each terminal state and is defined recursively by
setting 
\begin{equation*}
    g(X)=\mathrm{mex}\, g(\mathcal{F}(X)),
\end{equation*} the minimum excluded Grundy value among the followers of $X$.  
According to the Sprague-Grundy Theorem, a state $X$ is a winning state for the player whose turn it is to make a mark if and only if $g(X)\ne 0$.
\end{definition}

\begin{definition}
We write $x\oplus y$ for the \emph{nim sum} of nonnegative integers $x$ and $y$, defined to be ``binary addition without carrying.'' 

We write $G+H$ for the disjoint union of two graphs $G$ and $H$. When $X$ and $Y$ are states of graphs $G$ and $H$, respectively, we write $X\sqcup Y$ for the state of the disjoint union $G+H$ induced by $X$ and $Y$, and we refer to this state as the \emph{disjoint union} of $X$ and $Y$.

By the Sprague-Grundy Theorem,  
\begin{equation*}
    g(X\sqcup Y)=g(X)\oplus g(Y).
\end{equation*}
\end{definition}

\begin{definition}
For any state $X$ of a graph $G$, we let $U_G(X)$ or $U(X)$ denote the subgraph of $G$ induced by the edges of $G$ that are unmarked in $X$, that is, by the set $E(G)\setminus \{\{v,w\}\mid (v,w)\in X\}$.  
\end{definition}

\begin{definition}
For any graph $G$, the \emph{flip} operation on the set $A(G)$ of arrows of $G$ maps each arrow $(v,w)$ to $(w,v)$.  The \emph{flip} of a decoration $X$ of a graph $G$ is the decoration containing the flips of all of the arrows in $X$.  We denote the flip of $X$ by $\cev{X}$.  Note that $X$ is a state if and only if its flip is a state.
\end{definition}

\begin{definition}
A \emph{state isomorphism} from a state $X$ of a graph $G$ to a state $Y$ of a graph $H$ is a bijection $a:A(U(X))\to A(U(Y))$ that commutes with the flip operation and induces a bijection from the descendents of $X$ to the descendents of $Y$.
To make the last condition precise, let $a^+$ denote the bijection from the decorations of $G$ containing $X$ to the decorations of $H$ containing $Y$ defined by $a^+(X^*)=Y\cup a(X^*\setminus X)$. We require that $a^+$ restrict to a bijection from $\mathcal{F}^\bullet(X)$ to $\mathcal{F}^\bullet(Y)$.  In other words, we require that $a^+$ and $(a^+)^{-1}$ both be \emph{state-preserving}, that is, that they map states to states.  The requirement that $a$ commute with the flip operation ensures that $a$ induces a bijection between the edge sets of $U(X)$ and $U(Y)$.

We say that a state $X$ is \emph{isomorphic} to a state $Y$ if there exists a state isomorphism from $X$ to $Y$.  With this definition, state isomorphism is clearly an equivalence relation.  
\end{definition}

For any state $X$, the flip operation $a$ on $A(U(X))=A(U(\cev{X}))$ is a state isomorphism from $X$ to $\cev{X}$.  In this context, $a^+$ is the flip operation on decorations, and we have already noted above that a decoration is a state if and only if its flip is a state. 

\begin{proposition}
Isomorphic states $X$ and $Y$ of graphs $G$ and $H$, respectively, have equal Grundy values: $g(X)=g(Y)$.  
\end{proposition}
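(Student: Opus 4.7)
The plan is to prove the proposition by strong induction on the number of edges of $U(X)$, which is the same as $|A(U(X))|/2$. Since a state isomorphism $a\colon A(U(X))\to A(U(Y))$ is a bijection, we have $|A(U(X))| = |A(U(Y))|$, so $U(X)$ and $U(Y)$ have the same number of edges. This quantity strictly decreases when passing to a follower, so the induction is well-founded.

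For the base case, suppose $X$ is terminal, i.e.\ $\mathcal{F}(X)=\emptyset$, so $\mathcal{F}^\bullet(X)=\emptyset$. Since $a^+$ restricts to a bijection $\mathcal{F}^\bullet(X)\to\mathcal{F}^\bullet(Y)$, the state $Y$ is also terminal, and $g(X)=0=g(Y)$.

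For the inductive step, I first verify that $a^+$ restricts to a bijection $\mathcal{F}(X)\to\mathcal{F}(Y)$: any follower $X'=X\cup\{(v,w)\}$ of $X$ is in particular a descendent, so $a^+(X')=Y\cup\{a((v,w))\}$ is a descendent of $Y$ that contains exactly one more arrow than $Y$, hence a follower; the same argument applied to $(a^+)^{-1}$ gives surjectivity onto $\mathcal{F}(Y)$. Next, for each follower $X'=X\cup\{(v,w)\}$ with image $Y'=a^+(X')$, I claim that $a$ restricts to a state isomorphism from $X'$ to $Y'$. Indeed, $A(U(X'))=A(U(X))\setminus\{(v,w),(w,v)\}$, and because $a$ commutes with the flip operation, its image is precisely $A(U(Y))\setminus\{a((v,w)),a((w,v))\}=A(U(Y'))$; the restricted map still commutes with flip, and the induced map $(a|_{X'})^+$ agrees with the restriction of $a^+$ to decorations containing $X'$, so it inherits the state-preserving property and bijects $\mathcal{F}^\bullet(X')$ with $\mathcal{F}^\bullet(Y')$.

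Applying the inductive hypothesis to each pair $(X', a^+(X'))$ — valid because $U(X')$ has one fewer edge than $U(X)$ — gives $g(X')=g(a^+(X'))$ for every $X'\in\mathcal{F}(X)$. Combining this with the bijection between $\mathcal{F}(X)$ and $\mathcal{F}(Y)$, we conclude that $g(\mathcal{F}(X))=g(\mathcal{F}(Y))$ as sets, so
\begin{equation*}
    g(X)=\mathrm{mex}\,g(\mathcal{F}(X))=\mathrm{mex}\,g(\mathcal{F}(Y))=g(Y).
\end{equation*}
The only real subtlety is checking that the restriction of a state isomorphism to a follower is again a state isomorphism, and the flip-commutation hypothesis is exactly what makes this step work, since it ensures that both the arrow $(v,w)$ chosen and its reverse $(w,v)$ are removed from the unmarked arrow set on both sides simultaneously.
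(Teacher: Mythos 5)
Your proof is correct and takes essentially the same route as the paper: the paper's proof simply observes that a state isomorphism yields an isomorphism of game trees (a bijection from $\{X\}\cup\mathcal{F}^\bullet(X)$ to $\{Y\}\cup\mathcal{F}^\bullet(Y)$ respecting the follower relation) and then invokes the definition of the Sprague--Grundy function. Your induction on the number of edges of $U(X)$ merely makes explicit what the paper leaves implicit, including the worthwhile checks that $a^+$ carries followers to followers and that the restriction of $a$ to a follower is again a state isomorphism.
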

\begin{proof}
Suppose there exists a state isomorphism $a$ from $X$ to $Y$.  Then $a$ induces a bijection from $\{X\}\cup \mathcal{F}^\bullet(X)$ to $\{Y\}\cup \mathcal{F}^\bullet(Y)$ which respects the follower relation.  (This can be understood as providing an isomorphism between the game trees associated with the states $X$ and $Y$.)  By the definition of the Sprague-Grundy function, $g(X)=g(Y)$.
\end{proof}

In practice, the isomorphisms we employ between states $X$ and $Y$ will usually be induced by graph homomorphisms $f:U(X)\to U(Y)$.  

\begin{definition}
Suppose $X$ and $Y$ are states of graphs $G$ and $H$, respectively, and $f:U(X)\to U(Y)$ is a graph homomorphism.  Then the function $a_f:A(U(X))\to A(U(Y))$ on arrows given by $a_f((v,w))=(f(v),f(w))$ automatically commutes with the flip operation.  If $a_f$ defines a state isomorphism from $X$ to $Y$, we call it the state isomorphism \emph{induced by $f$}.
\end{definition}

As an example, Figure \ref{fig:state_isomorphism} displays isomorphic states $X$ and $Y$ of graphs $G$ and $H$, respectively.  A state isomorphism from $X$ to $Y$ is induced by the graph homomorphism $f:U(X)\to U(Y)$ that is the identity on $\{s, u, v, w\}$ and maps $t_1$ and $t_2$ to $t$. Note that the inverse of this particular state isomorphism is not induced by any graph homomorphism from $U(Y)$ to $U(X)$.  

\begin{figure}[H]
\centering
\begin{subfigure}{.4\textwidth}  
\centering
\begin{tikzpicture}[scale=.8]
\draw
{
(0,2)node{}
-- (0,1)node[label={[label distance=.4mm]left:$s$}]{}
-- (0,0)node[label={[label distance=0mm]left:$t_1$}]{}
-- (0,-1)node[label={[label distance=.4mm]left:$u$}]{}
-- (0,-2)node{}
(1,0)node[label={[label distance=.1mm]above:$t_2$}]{}
-- (2,0)node[label={[label distance=.4mm]above:$v$}]{}
-- (3,0)node[label={[label distance=.4mm]right:$w$}]{}
(3,1)node{}
-- (3,0)
-- (3,-1)node{}
};
\begin{scope}[decoration={markings, 
    mark=at position 0.58 with {\arrow{Straight Barb[scale=1.5]}}}
    ] 
\draw[postaction={decorate}]{(0,2) -- (0,1)};
\draw[postaction={decorate}]{(0,-1) -- (0,-2)};
\draw[postaction={decorate}]{(3,1) -- (3,0)};
\draw[postaction={decorate}]{(3,0) -- (3,-1)};
\end{scope}
\end{tikzpicture}
\end{subfigure}\hspace{0cm}%
\begin{subfigure}{.4\textwidth}  
\centering
\begin{tikzpicture}[scale=.8]
\draw
{
(0,1)node[label={[label distance=.4mm]left:$s$}]{}
-- (0,0)node[label={[label distance=.4mm]left:$t$}]{}
-- (0,-1)node[label={[label distance=.4mm]left:$u$}]{}
-- (0,-2)node{}
(-.6,2)node{}
-- (0,1) 
-- (.6,2)node{}
(0,0)
-- (1,0)node[label={[label distance=.4mm]above:$v$}]{}
-- (2,0)node[label={[label distance=.4mm]above:$w$}]{}
};
\begin{scope}[decoration={markings, 
    mark=at position 0.58 with {\arrow{Straight Barb[scale=1.5]}}}
    ] 
\draw[postaction={decorate}]{(-.6,2) -- (0,1)};
\draw[postaction={decorate}]{(.6,2) -- (0,1)};
\draw[postaction={decorate}]{(0,-1) -- (0,-2)};
\end{scope}
\end{tikzpicture}
\end{subfigure}
    \caption{Isomorphic states $X$ and $Y$ of graphs $G$ and $H$, respectively.}
    \label{fig:state_isomorphism}
\end{figure}

It would not be sufficient, in the definition of state isomorphism, to require merely that $a^+$ restrict to a bijection from $\mathcal{F}(X)$ to $\mathcal{F}(Y)$.  With that definition, the states in Figure \ref{fig:followersvdescendents} would be isomorphic via identification of the vertices $v_1$ and $v_2$, even though the corresponding game trees are clearly not isomorphic.  

\begin{figure}[H]
\centering
\begin{subfigure}{.4\textwidth}  
\centering
\begin{tikzpicture}[scale=.8]
\draw
{
(-2,0)node{}
-- (-1,0)node[label={[label distance=.5mm]above:$u$}]{}
-- (0,0)node[label={[label distance=.1mm]above:$v_1$}]{}
(1,0)node[label={[label distance=.1mm]above:$v_2$}]{}
-- (2,0)node[label={[label distance=.3mm]above:$w$}]{}
-- (3,0)node{}
};
\begin{scope}[decoration={markings, 
    mark=at position 0.58 with {\arrow{Straight Barb[scale=1.5]}}}
    ] 
\draw[postaction={decorate}]{(-2,0) -- (-1,0)};
\draw[postaction={decorate}]{(3,0) -- (2,0)};
\end{scope}
\end{tikzpicture}
\end{subfigure}\hspace{1cm}%
\begin{subfigure}{.4\textwidth}  
\centering
\begin{tikzpicture}[scale=.8]
\draw
{
(-2,0)node{}
-- (-1,0)node[label={[label distance=.5mm]above:$u$}]{}
-- (0,0)node[label={[label distance=.5mm]above:$v$}]{}
-- (1,0)node[label={[label distance=.3mm]above:$w$}]{}
-- (2,0)node{}
};
\begin{scope}[decoration={markings, 
    mark=at position 0.58 with {\arrow{Straight Barb[scale=1.5]}}}
    ] 
\draw[postaction={decorate}]{(-2,0) -- (-1,0)};
\draw[postaction={decorate}]{(2,0) -- (1,0)};
\end{scope}
\end{tikzpicture}
\end{subfigure}
    \caption{Two non-isomorphic states.}
    \label{fig:followersvdescendents}
\end{figure}

Fortunately, it does suffice to check that $a^+$ and $(a^+)^{-1}$ are well behaved on a small class of descendents of $X$ and $Y$ which we call ``flowers.''  

\begin{definition}
Let $X$ be any state of a graph $G$.  For any $v\in U(X)$, we call the following two decorations of $G$ the \emph{tail-flower} and the \emph{head-flower} of $X$ at $v$, respectively:
\begin{align*}
    X_t(v) & =X\cup \{(v,v')\mid \{v,v'\}\in E(U(G))\}, \\
    X_h(v) & =X\cup \{(v',v)\mid \{v,v'\}\in E(U(G))\}.
\end{align*}
These definitions are illustrated in Figure \ref{fig:Xt_Ht}.

\begin{figure}[H]
\centering
\begin{subfigure}{.3\textwidth}  
\centering
\begin{tikzpicture}[scale=1]
\draw
{
(0,0)node[label={[label distance=.6mm]-80:$u$}]{}
-- (1,0)node[label={[label distance=.5mm]below:$v$}]{}
-- (2,0)node{}
(120:1)node{}
-- (0,0)
-- (240:1)node{}
};
\begin{scope}[decoration={markings, 
    mark=at position 0.58 with {\arrow{Straight Barb[scale=1.5]}}}
    ] 
\draw[postaction={decorate}]{(120:1) -- (0,0)};
\end{scope}
\end{tikzpicture}
\caption*{$X$}
\end{subfigure}\hspace{0cm}%
\begin{subfigure}{.3\textwidth}  
\centering
\begin{tikzpicture}[scale=1]
\draw
{
(0,0)node[label={[label distance=.6mm]-80:$u$}]{}
-- (1,0)node[label={[label distance=.5mm]below:$v$}]{}
-- (2,0)node{}
(120:1)node{}
-- (0,0)
-- (240:1)node{}
};
\begin{scope}[decoration={markings, 
    mark=at position 0.58 with {\arrow{Straight Barb[scale=1.5]}}}
    ] 
\draw[postaction={decorate}]{(120:1) -- (0,0)};
\draw[postaction={decorate}]{(0,0) -- (240:1)};
\draw[postaction={decorate}]{(0,0) -- (1,0)};
\end{scope}
\end{tikzpicture}
\caption*{$X_t(u)$}
\end{subfigure}\hspace{0cm}%
\begin{subfigure}{.3\textwidth}
\centering
\begin{tikzpicture}[scale=1]
\draw
{
(0,0)node[label={[label distance=.6mm]-80:$u$}]{}
-- (1,0)node[label={[label distance=.5mm]below:$v$}]{}
-- (2,0)node{}
(120:1)node{}
-- (0,0)
-- (240:1)node{}
};
\begin{scope}[decoration={markings, 
    mark=at position 0.58 with {\arrow{Straight Barb[scale=1.5]}}}
    ] 
\draw[postaction={decorate}]{(120:1) -- (0,0)};
\draw[postaction={decorate}]{(0,0) -- (1,0)};
\draw[postaction={decorate}]{(2,0) -- (1,0)};
\end{scope}
\end{tikzpicture}
\caption*{$X_h(v)$}
\end{subfigure}
    \caption{A state $X$ of a graph $G$ along with two of its flowers $X_t(u)$ and $X_h(v)$.}
    \label{fig:Xt_Ht}
\end{figure}

Note that there exists a decoration $X^*\supset X$ of $G$ with a source at $v$ if and only if $X_t(v)$ has a source at $v$.  Similarly, that there exists a decoration $X^*\supset X$ of $G$ with a sink at $v$ if and only if $X_h(v)$ has a sink at $v$.
\end{definition}

\begin{definition}\label{local_def}
Let $X$ and $Y$ be states of graphs $G$ and $H$, respectively.  Suppose $a:A(U(X))\to A(U(Y))$ is a bijection that commutes with the flip operation.  We say that $a^+$ is \emph{locally state-preventing at the vertex $x$ of $U(X)$} provided that whenever a flower of $X$ at $x$ is not a state, its image under $a^+$ is also not a state.   

Similarly, we say that $(a^+)^{-1}$ is \emph{locally state-preventing at the vertex $y$ of $U(Y)$} provided that whenever a flower of $Y$ at $y$ is not a state, its image under $(a^+)^{-1}$ is also not a state.
\end{definition}

\begin{theorem}[Local Criterion for State Isomorphism]\label{local_crit}
Let $X$ and $Y$ be states of graphs $G$ and $H$, respectively.  Suppose $a:A(U(X))\to A(U(Y))$ is a bijection that commutes with the flip operation. Then $a$ is a state isomorphism from $X$ to $Y$ if and only if $a^+$ and $(a^+)^{-1}$ are locally state-preventing at every vertex of $U(X)$ and $U(Y)$, respectively.  
\end{theorem}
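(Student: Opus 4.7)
The plan is to prove the two directions separately, relying throughout on a monotonicity observation: if a decoration $D$ has a sink or source at a vertex $v$, then every edge at $v$ is already marked in $D$, so every decoration $D' \supseteq D$ inherits the same sink or source at $v$. I would record first the consequence that any decoration sandwiched between two states, $X \subseteq D \subseteq X^*$, is itself a state; in particular, $\mathcal{F}^\bullet(X)$ is exactly the set of states of $G$ that properly contain $X$.

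For the forward direction, I would argue by contrapositive. Suppose $a^+$ fails to be locally state-preventing at some $x \in U(X)$: there is a flower $F$ of $X$ at $x$ that is not a state while $a^+(F)$ is a state. Since $x$ is incident to an unmarked edge of $X$, we have $X \subsetneq F$ and correspondingly $Y \subsetneq a^+(F)$, placing $a^+(F) \in \mathcal{F}^\bullet(Y)$. If $a$ were a state isomorphism, $(a^+)^{-1}$ would carry $\mathcal{F}^\bullet(Y)$ into $\mathcal{F}^\bullet(X)$, forcing $F = (a^+)^{-1}(a^+(F))$ to be a state---a contradiction. A failure of $(a^+)^{-1}$ at a vertex of $U(Y)$ is handled symmetrically.

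For the backward direction, assume the local conditions hold. By symmetry it suffices to show that for any state $X^* \supseteq X$, the decoration $Y^* := a^+(X^*)$ is a state. Suppose for contradiction that $Y^*$ has an internal source at some vertex $y$ (the internal-sink case is analogous, using the head-flower $Y_h(y)$). Since $Y$ is a state, $y$ must lie in $U(Y)$. The arrows at $y$ in the tail-flower $Y_t(y)$ consist of those already in $Y$---each of which is outgoing, since $Y \subseteq Y^*$ and $y$ is a source in $Y^*$---together with freshly added outgoing arrows along all remaining edges at $y$; hence $Y_t(y)$ also has an internal source at $y$ and is not a state. Moreover, the source at $y$ in $Y^*$ forces every previously unmarked edge at $y$ to be marked outgoing in $Y^*$, so $Y_t(y) \subseteq Y^*$. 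Applying $(a^+)^{-1}$ yields $X \subseteq (a^+)^{-1}(Y_t(y)) \subseteq X^*$, and the sandwich consequence of monotonicity certifies $(a^+)^{-1}(Y_t(y))$ as a state---contradicting that $(a^+)^{-1}$ is locally state-preventing at $y$.

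The main obstacle, I expect, is conceptual rather than technical: one must recognize that whenever $Y^* = a^+(X^*)$ fails to be a state at a vertex $y$, the offending flower of $Y$ at $y$ is automatically contained in $Y^*$, so that its $(a^+)^{-1}$-image is sandwiched between $X$ and $X^*$ and hence forced to be a state by monotonicity. Once this containment is spotted, the remainder of the argument is routine bookkeeping across the tail/source and head/sink cases.
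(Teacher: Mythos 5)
Your proof is correct and follows essentially the same route as the paper's: both directions hinge on observing that when a decoration fails to be a state at a vertex, the corresponding flower is a non-state contained in that decoration, combined with the monotonicity of sinks and sources under adding arrows. You make explicit the sandwich lemma (any decoration between two states is a state, so descendents are exactly the states properly containing $X$) that the paper uses implicitly, and you organize the backward direction via $(a^+)^{-1}$ where the paper writes out the $a^+$ half and calls the other ``analogous,'' but these are only cosmetic differences.
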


\begin{proof}
Suppose, first, that $a^+$ and $(a^+)^{-1}$ are locally state-preventing at every vertex of $U(X)$ and $U(Y)$, respectively.  We show that $a$ is a state isomorphism.  

Let $X^*$ be any decoration of $G$ containing $X$ which is not a descendent of $X$.  Without loss of generality, we may assume that $X^*$ has a source at some vertex $x$ of $U(X)$ that is not a leaf of $G$. It follows that $X_t(x)$ is not a state and is a subset of $X^*$.  Because $a^+$ is locally state-preventing, $a^+(X_t(x))$ is not a state of $H$, and therefore $a^+(X^*)$ is not a descendent of $Y$.

The proof that $a^+(X^*)\notin \mathcal{F}^\bullet(Y)$ implies $X^*\notin \mathcal{F}^\bullet(X)$ is analogous.  

Next, suppose $a$ is a state isomorphism from $X$ to $Y$. Because $a^+$ induces a bijection from the descendents of $X$ to the descendents of $Y$, $a^+$ and $(a^+)^{-1}$ are automatically locally state-preventing at every vertex of $U(X)$ and $U(Y)$, respectively.  
\end{proof}

In certain circumstances, the conditions for $a^+$ and $(a^+)^{-1}$ to be locally state-preventing can be stated more simply.  
\begin{definition}\label{local_state_iso}
Suppose $a=a_f:A(U(X))\to A(U(Y))$ is a bijection induced by a graph homomorphism $f:U(X)\to U(Y)$, and suppose the vertex $x$ of $U(X)$ is the unique $f$-preimage of $f(x)$.  In this situation, we have  $a_f^+(X_t(x))\allowbreak =Y_t(f(x))$ and $a_f^+(X_h(x))=Y_h(f(x))$.  Therefore, the conditions for $a_f^+$ and $(a_f^+)^{-1}$ to be locally state-preventing at $x$ and $f(x)$, respectively, can be combined into a single criterion:  each flower of $X$ at $x$ is a state if and only if its image under $a_f^+$ is a state.  When this criterion holds, we say that $a_f$ is a \emph{local state isomorphism at $x$}.  
\end{definition}

When $f$ is a graph isomorphism, this simpler criterion can be applied at every vertex of $U(X)$.  

\begin{corollary}\label{local_iso}
Suppose $a_f$ is a bijection induced by a graph isomorphism $f:U(X)\to U(Y)$.  Then $a_f$ is a state isomorphism if and only if it is a local state isomorphism at every vertex $x$ of $U(X)$.  
\end{corollary}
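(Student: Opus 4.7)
The plan is to reduce the claim to Theorem \ref{local_crit} and then observe that under the hypothesis that $f$ is a graph isomorphism, the two ``locally state-preventing'' conditions at a vertex $x$ of $U(X)$ and its image $f(x)$ in $U(Y)$ collapse into the single biconditional defining a local state isomorphism at $x$.

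First I would note that because $f$ is a graph isomorphism, every vertex $x$ of $U(X)$ is the unique $f$-preimage of $f(x)$, so Definition \ref{local_state_iso} does apply at every vertex of $U(X)$, and in particular $a_f^+(X_t(x))=Y_t(f(x))$ and $a_f^+(X_h(x))=Y_h(f(x))$. Symmetrically, $f^{-1}:U(Y)\to U(X)$ is a graph isomorphism with $a_{f^{-1}}=(a_f)^{-1}$, and for every vertex $y$ of $U(Y)$, the flowers of $Y$ at $y$ are the $a_f^+$-images of the flowers of $X$ at $f^{-1}(y)$. Thus $f$ sets up a bijection between vertices of $U(X)$ and vertices of $U(Y)$ under which flowers correspond to flowers.

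For the forward direction, assume $a_f$ is a local state isomorphism at every vertex $x$ of $U(X)$. Fix such an $x$. The local state isomorphism biconditional says each flower of $X$ at $x$ is a state iff its image under $a_f^+$ is a state. The forward implication gives that whenever a flower of $X$ at $x$ fails to be a state, so does its image, which is exactly the local-state-preventing condition for $a_f^+$ at $x$. The reverse implication, combined with the identifications $a_f^+(X_t(x))=Y_t(f(x))$ and $a_f^+(X_h(x))=Y_h(f(x))$, says that whenever a flower of $Y$ at $f(x)$ fails to be a state, its $(a_f^+)^{-1}$-image fails to be a state, which is the local-state-preventing condition for $(a_f^+)^{-1}$ at $f(x)$. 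Since $f$ is a bijection on vertex sets, as $x$ ranges over $V(U(X))$, $f(x)$ ranges over $V(U(Y))$, so we obtain the hypotheses of Theorem \ref{local_crit} and conclude that $a_f$ is a state isomorphism.

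For the converse, assume $a_f$ is a state isomorphism. Theorem \ref{local_crit} gives that $a_f^+$ and $(a_f^+)^{-1}$ are locally state-preventing at every vertex of $U(X)$ and $U(Y)$, respectively. Fix a vertex $x$ of $U(X)$. Using the identifications above, the local-state-preventing condition for $a_f^+$ at $x$ says: if a flower of $X$ at $x$ is not a state, then its $a_f^+$-image is not a state. The local-state-preventing condition for $(a_f^+)^{-1}$ at $f(x)$ gives the converse implication, namely that if the $a_f^+$-image of a flower of $X$ at $x$ (which is a flower of $Y$ at $f(x)$) is not a state, then the flower of $X$ at $x$ is not a state. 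Together these produce the biconditional required for $a_f$ to be a local state isomorphism at $x$. No real obstacle arises here: the only thing to watch carefully is bookkeeping, making sure that the ``at $f(x)$'' condition on $(a_f^+)^{-1}$ translates via the flower identities back to a condition at $x$, which is precisely what bijectivity of $f$ on vertices guarantees.
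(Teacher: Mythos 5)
Your proposal is correct and follows exactly the route the paper intends: the paper leaves this corollary without a written proof, relying on the observation in Definition \ref{local_state_iso} that when $x$ is the unique $f$-preimage of $f(x)$ the two locally state-preventing conditions at $x$ and $f(x)$ combine into the single biconditional, together with Theorem \ref{local_crit}. Your writeup simply makes explicit the bookkeeping (bijectivity of $f$ on vertices, the flower identities $a_f^+(X_t(x))=Y_t(f(x))$ and $a_f^+(X_h(x))=Y_h(f(x))$) that the paper takes for granted.
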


We also establish a sufficient condition for $a_f$ to be a state isomorphism which, when it applies, is quicker to verify. 

\begin{definition}
Given a state $X$ of a graph $G$ and a vertex $v$ of $U(X)$, we say that $v$ is a \emph{head} of $X$ if there exists some arrow $(w,v)\in X$.  Similarly, we say that $v$ is a \emph{tail} of $X$ if there exists some arrow $(v,w)\in X$.  We write $h(X)$ and $t(X)$ for the sets of heads and tails of $X$, respectively.
\end{definition}

\begin{theorem}\label{sufficient}
Suppose $a_f$ is a bijection induced by a graph isomorphism $f:U(X)\to U(Y)$.  If the following conditions hold, then $a_f$ is a state isomorphism from $X$ to $Y$.  
\begin{itemize}
    \item $f(h(X)\cup L(G))=h(Y)\cup L(H)$.
    \item $f(t(X)\cup L(G))=t(Y)\cup L(H)$.
\end{itemize}
\end{theorem}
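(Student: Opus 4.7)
The plan is to reduce the claim to the local criterion of Corollary \ref{local_iso}, which, since $f$ is a graph isomorphism, says that $a_f$ is a state isomorphism if and only if it is a local state isomorphism at every vertex $x$ of $U(X)$. By Definition \ref{local_state_iso}, this amounts to showing, for each such $x$, that $X_t(x)$ is a state if and only if $Y_t(f(x))$ is a state, and analogously for the head flowers $X_h(x)$ and $Y_h(f(x))$.

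The technical heart of the proof is a careful enumeration of how a tail flower $X_t(x)$ can fail to be a state. Since $X$ itself is a state and the only new arrows all leave $x$ and enter unmarked neighbors of $x$, I expect exactly two failure modes. The first is that $x$ becomes an internal source of $X_t(x)$; this occurs precisely when $x \in \operatorname{Int}(G)$ and no arrow of $X$ enters $x$, i.e., when $x \notin h(X) \cup L(G)$. The second is that some neighbor $v$ of $x$ in $U(X)$ becomes an internal sink of $X_t(x)$; this occurs precisely when $v \in \operatorname{Int}(G)$, the edge $\{x,v\}$ is the only unmarked edge at $v$ (so $\deg_{U(X)}(v) = 1$ with $x$ as its unique $U(X)$-neighbor), and no arrow of $X$ leaves $v$, i.e., $v \notin t(X) \cup L(G)$. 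No other failures are possible: a vertex not adjacent to $x$ in $U(X)$ has its arrows unchanged from $X$, and $x$ itself cannot become a sink because all of its new arrows are outgoing.

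From here the conclusion follows quickly. The graph isomorphism $f : U(X) \to U(Y)$ preserves adjacency in the unmarked subgraphs as well as the degree-one property. The two hypotheses then directly translate the remaining membership conditions: $v \in h(X) \cup L(G)$ if and only if $f(v) \in h(Y) \cup L(H)$, and analogously for $t$. Applied to each of the two failure modes above, this yields that $X_t(x)$ is a state if and only if $Y_t(f(x))$ is a state. A symmetric analysis handles the head flowers $X_h(x)$ with the roles of $h$ and $t$ interchanged, and invoking Corollary \ref{local_iso} completes the proof.

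The point requiring the most care is the second failure mode: when arrows are added outgoing from $x$, it is tempting to focus only on $x$ itself, but a neighbor $v$ whose other edges already all point inward can be tipped into sink status. Handling this is precisely why the hypotheses involve $h(X) \cup L(G)$ and $t(X) \cup L(G)$, rather than $h(X)$ and $t(X)$ alone --- a leaf of $G$ can never be an internal sink or source, so adding leaves to both sides enlarges the ``safe'' set without altering the substantive content of the criterion.
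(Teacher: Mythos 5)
Your proof is correct, but it takes a genuinely different route from the paper's. The paper argues directly from the definition of state isomorphism: given any decoration $X^*\supseteq X$ that is not a descendent of $X$, it locates an internal source (or sink) of $X^*$ at some vertex $x$ of $U(X)$, notes that this forces $x\notin h(X)\cup L(G)$ (resp.\ $x\notin t(X)\cup L(G)$), and concludes from the hypotheses that $f(x)$ is an internal source (resp.\ sink) of $a_f^+(X^*)$, so that $a_f^+(X^*)$ is not a descendent of $Y$; the converse direction is symmetric. You instead reduce to Corollary \ref{local_iso} and classify how a flower can fail to be a state. Your classification is right, and the case you flag as delicate --- that $X_t(x)$ can fail not only by making $x$ an internal source but also by tipping a neighbor $v$ of $x$ with $\deg_{U(X)}(v)=1$ and $v\notin t(X)\cup L(G)$ into an internal sink --- is exactly the extra work your route forces on you; it transfers under $f$ because a graph isomorphism of the unmarked subgraphs preserves adjacency and degree while the hypotheses preserve membership in $h(X)\cup L(G)$ and $t(X)\cup L(G)$. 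The paper's global argument sidesteps this case analysis entirely, since it never matters whether the offending internal source or sink sits at the vertex just played at or at a neighbor, and is therefore shorter; your local argument costs a careful enumeration but makes explicit precisely which local configurations the two hypotheses are guarding against, and ties the theorem back to the flower machinery of Theorem \ref{local_crit} used elsewhere in the paper.
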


\begin{proof}
Suppose these two conditions hold, and let $X^*$ be any decoration of $G$ containing $X$ which is not a descendent of $X$.  Without loss of generality, we may assume that $X^*$ has a source at some vertex $x$ of $U(X)$ that is not a leaf of $G$.  It follows that $x\notin h(X)\cup L(G)$.  By the first condition in the theorem, we have $f(x)\notin h(Y)\cup L(H)$.  This implies that $a_f^+(X^*)$ has a source at the vertex $f(x)$ which is not a leaf of $H$, so $a_f^+(X^*)$ is not a descendent of $Y$.  

The proof that $a_f^+(X^*)\notin \mathcal{F}^\bullet (Y)$ implies $X^*\notin \mathcal{F}^\bullet (X)$ is analogous.  
\end{proof}

For example, consider the states $X$ and $Y$ of graphs $G$ and $H$ displayed in Figure \ref{fig:state_iso}.  The graph isomorphism $f:U(X)\to U(Y)$ which is the identity on $u$, $v$, and $w$ induces a state isomorphism $a_f$ from $X$ to $Y$, because
$h(X)\cup L(G)=\{u, w\}=h(Y)\cup L(H)$ and $t(X)\cup L(G)=\{u\}=t(Y)\cup L(H)$.

\begin{figure}[H]
\centering
\begin{subfigure}{.4\textwidth}  
\centering
\begin{tikzpicture}[scale=.8]
\draw
{
(-2,0)node[label={[label distance=.4mm]below:$u$}]{}
-- (-1,0)node[label={[label distance=.4mm]below:$v$}]{}
-- (0,0)node[label={[label distance=.4mm]260:$w$}]{}
-- (60:1)node{}
(0,0)
-- (-60:1)node{}
};
\begin{scope}[decoration={markings, 
    mark=at position 0.58 with {\arrow{Straight Barb[scale=1.5]}}}
    ] 
\draw[postaction={decorate}]{(60:1) -- (0,0)};
\draw[postaction={decorate}]{(-60:1) -- (0,0)};
\end{scope}
\end{tikzpicture}
\end{subfigure}\hspace{0cm}%
\begin{subfigure}{.4\textwidth}  
\centering
\begin{tikzpicture}[scale=.8]
\draw
{
(0,0)node[label={[label distance=.4mm]280:$u$}]{}
-- (1,0)node[label={[label distance=.4mm]below:$v$}]{}
-- (2,0)node[label={[label distance=.4mm]below:$w$}]{}
-- (3,0)node{}
(120:1)node{}
-- (0,0)
-- (240:1)node{}
};
\begin{scope}[decoration={markings, 
    mark=at position 0.58 with {\arrow{Straight Barb[scale=1.5]}}}
    ] 
\draw[postaction={decorate}]{(120:1) -- (0,0)};
\draw[postaction={decorate}]{(0,0) -- (240:1)};
\draw[postaction={decorate}]{(3,0) -- (2,0)};
\end{scope}
\end{tikzpicture}
\end{subfigure}
    \caption{Isomorphic states $X$ and $Y$ of graphs $G$ and $H$.}
    \label{fig:state_iso}
\end{figure}

The sufficient conditions in Theorem \ref{sufficient} for $a_f$ to be a state isomorphism are not necessary.  For example, the states $X$ and $Y$ of graphs $G$ and $H$ in Figure \ref{fig:not_necessary} are clearly isomorphic via the identity on $u$, $v$, and $w$, even though $v\in h(X)$ and $v\notin h(Y)\cup L(H)$.  

\begin{figure}[H]
\centering
\begin{subfigure}{.4\textwidth}  
\centering
\begin{tikzpicture}[scale=.8]
\draw
{
(-2,0)node{}
-- (-1,0)node[label={[label distance=.5mm]above:$u$}]{}
-- (0,0)node[label={[label distance=.5mm]above:$v$}]{}
-- (1,0)node[label={[label distance=.3mm]above:$w$}]{}
-- (2,0)node{}
(0,0) -- (0,-1)node{}
};
\begin{scope}[decoration={markings, 
    mark=at position 0.58 with {\arrow{Straight Barb[scale=1.5]}}}
    ] 
\draw[postaction={decorate}]{(-2,0) -- (-1,0)};
\draw[postaction={decorate}]{(2,0) -- (1,0)};
\draw[postaction={decorate}]{(0,-1) -- (0,0)};
\end{scope}
\end{tikzpicture}
\end{subfigure}\hspace{0cm}%
\begin{subfigure}{.4\textwidth}  
\centering
\begin{tikzpicture}[scale=.8]
\draw
{
(-2,0)node{}
-- (-1,0)node[label={[label distance=.5mm]above:$u$}]{}
-- (0,0)node[label={[label distance=.5mm]above:$v$}]{}
-- (1,0)node[label={[label distance=.3mm]above:$w$}]{}
-- (2,0)node{}
};
\begin{scope}[decoration={markings, 
    mark=at position 0.58 with {\arrow{Straight Barb[scale=1.5]}}}
    ] 
\draw[postaction={decorate}]{(-2,0) -- (-1,0)};
\draw[postaction={decorate}]{(2,0) -- (1,0)};
\end{scope}
\end{tikzpicture}
\end{subfigure}
    \caption{Isomorphic states $X$ and $Y$ of graphs $G$ and $H$ with $v\in h(X)$ and $v\notin h(Y)\cup  L(H)$.}
    \label{fig:not_necessary}
\end{figure}

\section{Flows, Crashes, Twigs, and Rods}

For any nonnegative integer $n$, we write $\ol n$ for the remainder when $n$ is divided by $2$.  

\begin{definition}
For any positive integer $n$, the \emph{$n$-path}, denoted $P_n$, is the graph with vertex set $\{0,1,2,\ldots,n-1\}$ and edge set $\{\{0,1\},\{1,2\},\ldots,\{n-2,n-1\}\}$.  Any graph isomorphic to $P_n$ for some $n$ is called a path graph.

For any nonnegative integer, the \emph{$n$-flow} state $F_n$ of the path graph with vertex set $\{-1, 0,1,\allowbreak \ldots,n,n+1\}$ is defined to be $\{(-1,0),(n,n+1)\}$. For any positive integer, the \emph{$n$-crash} state $C_n$ of the same path graph is defined to be $\{(-1,0),(n+1,n)\}$.  Note that $F_n$ and $C_n$ both have $n$ unmarked edges.  We disallow $n=0$ for $C_n$ because the resulting decoration would not be a state.  
\end{definition}

\begin{figure}[H]
\centering
\begin{subfigure}{.4\textwidth}  
\centering
\begin{tikzpicture}[scale=.8]
\draw
{
(-1,0)node[label={[label distance=-.3mm]270:$-1$}]{}
-- (0,0)node[label={[label distance=.7mm]below:$0$}]{}
-- (1,0)node[label={[label distance=.7mm]below:$1$}]{}
-- (2,0)node[label={[label distance=.7mm]below:$2$}]{}
-- (3,0)node[label={[label distance=.7mm]below:$3$}]{}
-- (4,0)node[label={[label distance=.7mm]below:$4$}]{}
};
\begin{scope}[decoration={markings, 
    mark=at position 0.58 with {\arrow{Straight Barb[scale=1.5]}}}
    ] 
\draw[postaction={decorate}]{(-1,0) -- (0,0)};
\draw[postaction={decorate}]{(3,0) -- (4,0)};
\end{scope}
\end{tikzpicture}
\end{subfigure}\hspace{1cm}%
\begin{subfigure}{.4\textwidth}  
\centering
\begin{tikzpicture}[scale=.8]
\draw
{
(-1,0)node[label={[label distance=-.3mm]270:$-1$}]{}
-- (0,0)node[label={[label distance=.7mm]below:$0$}]{}
-- (1,0)node[label={[label distance=.7mm]below:$1$}]{}
-- (2,0)node[label={[label distance=.7mm]below:$2$}]{}
-- (3,0)node[label={[label distance=.7mm]below:$3$}]{}
-- (4,0)node[label={[label distance=.7mm]below:$4$}]{}
};
\begin{scope}[decoration={markings, 
    mark=at position 0.58 with {\arrow{Straight Barb[scale=1.5]}}}
    ] 
\draw[postaction={decorate}]{(-1,0) -- (0,0)};
\draw[postaction={decorate}]{(4,0) -- (3,0)};
\end{scope}
\end{tikzpicture}
\end{subfigure}
    \caption{The states $F_3$ and $C_3$ of a path graph with $6$ vertices.}
    \label{fig:flow_crash}
\end{figure}

\begin{proposition}
$g(F_n)=\ol n$ for $n\ge0$, and $g(C_n)=\ol n\oplus 1$ for $n\ge 1$.
\end{proposition}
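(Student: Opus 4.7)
The plan is to proceed by strong induction on $n$, establishing both formulas simultaneously. The base cases are $g(F_0)=0$ (terminal, no unmarked edges), $g(F_1)=1$ (the unique valid follower is terminal, since the reverse mark would make vertex $0$ an internal sink), and $g(C_1)=0$ (terminal, since either direction of marking the single edge creates an internal sink). These agree with $\ol{0}$, $\ol{1}$, and $\ol{1}\oplus 1$.

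For the inductive step with $n\ge 2$, I classify the followers of $F_n$ and $C_n$ by which interior edge is marked and in which direction. The key structural observation is that marking an interior edge splits the unmarked subgraph into two connected components, and the resulting follower is state-isomorphic to a disjoint union of two smaller flows or crashes, possibly flipped. The underlying identification is induced by the natural reindexing of each component as a shorter path, with two leaf ends absorbing the original boundary marks on one side and the newly placed arrow on the other. I would verify it via Theorem~\ref{sufficient}, checking that the reindexing sends heads to heads and tails to tails modulo leaves.

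For $F_n$, a forward mark $(i,i+1)$ with $0\le i\le n-1$ yields a follower $\cong F_i\sqcup F_{n-1-i}$, of Grundy value $\ol{i}\oplus\ol{n-1-i}=\ol{n-1}$, while a backward mark $(i+1,i)$ with $1\le i\le n-2$ yields $C_i\sqcup \cev{C}_{n-1-i}$, of Grundy value $(\ol{i}\oplus 1)\oplus(\ol{n-1-i}\oplus 1)=\ol{n-1}$ (endpoint backward marks being excluded because they create an internal sink or source). Thus every follower has Grundy value $\ol{n-1}$, and $g(F_n)=\mathrm{mex}\{\ol{n-1}\}=\ol{n-1}\oplus 1=\ol{n}$. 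Analogously for $C_n$: a forward mark $(i,i+1)$ with $0\le i\le n-2$ gives $F_i\sqcup C_{n-1-i}$, and a backward mark $(i+1,i)$ with $1\le i\le n-1$ gives $C_i\sqcup \cev{F}_{n-1-i}$, both of Grundy value $\ol{n-1}\oplus 1=\ol{n}$; hence $g(C_n)=\mathrm{mex}\{\ol{n}\}=\ol{n}\oplus 1$.

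The main obstacle is the careful bookkeeping for the state isomorphisms identifying each follower with the claimed disjoint union. The delicate point is tracking, for each side of the split, whether the resulting boundary orientation matches a flow pattern (incoming on one side, outgoing on the other) or a crash pattern (same orientation on both sides), and applying a flip where appropriate before identifying with a standard $F_j$ or $C_j$. Once those isomorphisms are in place, the remainder reduces to routine nim-sum arithmetic and the inductive hypothesis.
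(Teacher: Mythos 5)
Your proof is correct and follows essentially the same route as the paper: a simultaneous induction on $n$ in which each follower of $F_n$ or $C_n$ is identified, via a state isomorphism, with a disjoint union of two smaller flows and/or crashes (up to flips, which preserve Grundy values), after which the nim-sum computation shows every follower has Grundy value $\ol n\oplus 1$ (resp.\ $\ol n$), giving the claimed mex. Your extra care about which backward endpoint marks are forbidden and about flipping one factor is a harmless refinement of what the paper leaves implicit.
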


\begin{proof}
We prove both claims simultaneously by induction on $n$.  The states $F_0$ and $C_1$ are terminal, so $g(F_0)=g(C_1)=0$.  

Assume the result is known for all $F_k$ and $C_k$ with $k<n$, where $n$ is a positive integer. 

Each follower of $F_n$ is isomorphic either to some $F_k\sqcup F_{n-1-k}$, for $k\in\{0, \ldots, n-1\}$, or to some $C_k\sqcup C_{n-1-k}$, for $k\in\{1, \ldots, n-2\}$.  By assumption, $g(F_k\sqcup F_{n-1-k})=\ol{k}\oplus \ol{n-1-k}=\ol n\oplus 1$, and $g(C_k\sqcup C_{n-1-k})=(\ol k\oplus 1)\oplus (\ol{n-1-k}\oplus 1)=\ol n\oplus 1$.  Therefore, $g(F_n)=\ol n$.

Similarly, each follower of $C_n$ is isomorphic either to some $F_k\sqcup C_{n-1-k}$, for $k\in\{0, \ldots,n-2\}$, or to some $C_k\sqcup F_{n-1-k}$, for $k\in\{1, \ldots, n-1\}$. By assumption, $g(F_k\sqcup C_{n-1-k})=\ol k\oplus (\ol{n-1-k}\oplus 1)=\ol n$, and $g(C_k\sqcup F_{n-1-k})=(\ol k\oplus 1)\oplus \ol{n-1-k}=\ol n$. Therefore, $g(C_n)=\ol n\oplus 1$.
\end{proof}

\begin{definition}
The \emph{$n$-twig} state $T_n$ of the path graph $P_{n+2}$ with vertex set $\{0,1,2,\ldots,\allowbreak n+1\}$ is defined to be $\{(n,n+1)\}$.
\end{definition}

\begin{figure}[H]
\centering
\begin{tikzpicture}[scale=.8]
\draw
{
(0,0)node[label={[label distance=.7mm]below:$0$}]{}
-- (1,0)node[label={[label distance=.7mm]below:$1$}]{}
-- (2,0)node[label={[label distance=.7mm]below:$2$}]{}
-- (3,0)node[label={[label distance=.7mm]below:$3$}]{}
-- (4,0)node[label={[label distance=.7mm]below:$4$}]{}
};
\begin{scope}[decoration={markings, 
    mark=at position 0.58 with {\arrow{Straight Barb[scale=1.5]}}}
    ] 
\draw[postaction={decorate}]{(3,0) -- (4,0)};
\end{scope}
\end{tikzpicture}
    \caption{The state $T_3$ of the path graph $P_5$.}
    \label{fig:twig}
\end{figure}

\begin{proposition}
$g(T_n)=n$ for $n\ge 0$.
\end{proposition}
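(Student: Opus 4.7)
The plan is to prove this by induction on $n$, parallel to the argument for $F_n$ and $C_n$. The base case $T_0$ is terminal because its unique edge is marked, so $g(T_0) = 0$.

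For the inductive step, assume $g(T_k) = k$ for all $k < n$. The state $T_n$ has $n$ unmarked edges. For each edge $\{i,i+1\}$ with $0 \le i \le n-2$, both orientations are permissible, while the edge $\{n-1,n\}$ admits only the arrow $(n-1,n)$, since $(n,n-1)$ would make the internal vertex $n$ a source. This produces $2n - 1$ followers, which I would label
\begin{align*}
A_i &:= T_n \cup \{(i, i+1)\} \quad (0 \le i \le n-1), \\
B_i &:= T_n \cup \{(i+1, i)\} \quad (0 \le i \le n-2).
\end{align*}

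The next step is to identify each follower, up to state isomorphism, with a disjoint union of the known states $F_k$, $C_k$, $T_k$, or their flips:
\begin{itemize}
\item $A_{n-1} \cong T_{n-1}$: vertex $n$ has both incident edges marked compatibly, so it is neither a sink nor a source and imposes no further constraint on the remaining path.
\item $A_i \cong T_i \sqcup F_{n-i-1}$ for $0 \le i \le n-2$: here $U(A_i)$ splits into a left path on $\{0,\dots,i\}$ (with vertex $i$ forbidden from becoming a source) and a right path on $\{i+1,\dots,n\}$ (with $i+1$ sink-forbidden and $n$ source-forbidden).
\item $B_i \cong \cev{T_i} \sqcup \cev{C_{n-i-1}}$ for $0 \le i \le n-2$: the analogous splitting, with flipped constraints on the left summand and both endpoints of the right summand source-forbidden.
\end{itemize}
The identifying state isomorphisms are induced by the obvious graph isomorphisms between the unmarked subgraphs, and the hypotheses of Theorem~\ref{sufficient} (or equivalently Corollary~\ref{local_iso}) are straightforward to verify: at each formerly-marked endpoint, the ``cannot become a source'' or ``cannot become a sink'' constraint matches on both sides.

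Combining the induction hypothesis with the earlier computations of $g(F_k) = \ol{k}$ and $g(C_k) = \ol{k} \oplus 1$, together with the flip-invariance of Grundy values, gives
\begin{align*}
g(A_{n-1}) &= n - 1, \\
g(A_i) &= i \oplus \ol{n - i - 1}, \\
g(B_i) &= i \oplus \ol{n - i - 1} \oplus 1,
\end{align*}
for $0 \le i \le n - 2$. Since $g(A_i)$ and $g(B_i)$ agree in every bit except the last, the pair $\{g(A_i), g(B_i)\}$ equals $\{2\lfloor i/2 \rfloor, 2\lfloor i/2 \rfloor + 1\}$. Letting $i$ range over $\{0, 1, \ldots, n-2\}$, these pairs cover $\{0, 1, \ldots, 2\lfloor n/2 \rfloor - 1\}$, and adjoining $g(A_{n-1}) = n-1$ completes the set to $\{0, 1, \ldots, n-1\}$ regardless of the parity of $n$. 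Therefore $g(T_n) = \mathrm{mex}\{0, 1, \ldots, n-1\} = n$.

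The main obstacle will be the bookkeeping in the state-isomorphism decompositions: one must trace carefully which vertex of each summand corresponds to which vertex of the unmarked subgraph, and handle the degenerate endpoint case $i = 0$ in $B_i$ (where $\cev{T_0}$ has an empty unmarked subgraph and simply contributes Grundy value $0$). Once these decompositions are in hand, the induction and the mex computation are routine.
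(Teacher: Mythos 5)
Your proof is correct and takes essentially the same route as the paper: induction on $n$, identifying each follower of $T_n$ (up to state isomorphism and flips) with $T_k\sqcup F_{n-1-k}$ or $T_k\sqcup C_{n-1-k}$ and computing nim-sums. The only difference is that you spell out the mex computation via the pairs $\{2\lfloor i/2\rfloor, 2\lfloor i/2\rfloor+1\}$, which the paper simply asserts.
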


\begin{proof}
We use induction.  The state $T_0$ is terminal, so $g(T_0)=0$.  For $n\ge 1$, each follower of $T_n$ is isomorphic either to some $T_{k}\sqcup F_{n-1-k}$, for $k\in\{0, \ldots, n-1\}$, or to some $T_{k}\sqcup C_{n-1-k}$, for $k\in\{0, \ldots, n-2\}$.  By induction, $g(T_{k}\sqcup F_{n-1-k})=k\oplus \ol{n-1-k}$, and $g(T_k\sqcup C_{n-1-k})=k\oplus (\ol{n-k})$.  Therefore, $g(\mathcal{F}(T_n))=\{0, 1, \ldots , n-1\}$, so $g(T_n)=n$.
\end{proof}

\begin{definition}
The \emph{$n$-rod} state $R_n$ of the path graph $P_{n+1}$ with vertex set $\{0, 1, 2,\ldots, n\}$ is defined to be the empty set, for any $n\ge 1$. 
\end{definition}

\begin{figure}[H]
\centering
\begin{tikzpicture}[scale=.8]
\draw
{
(0,0)node[label={[label distance=.7mm]below:$0$}]{}
-- (1,0)node[label={[label distance=.7mm]below:$1$}]{}
-- (2,0)node[label={[label distance=.7mm]below:$2$}]{}
-- (3,0)node[label={[label distance=.7mm]below:$3$}]{}
};
\end{tikzpicture}
    \caption{The state $R_3$ of the path graph $P_4$.}
    \label{fig:rod}
\end{figure}

\begin{proposition}
$g(R_n)=\ol n$ for $n\ge1$.
\end{proposition}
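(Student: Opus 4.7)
The plan is to compute $g(R_n)$ as the mex of Grundy values of its followers, via a ``twig decomposition'' at the single marked edge.

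Each follower of $R_n$ is obtained by marking an edge $\{i,i+1\}$ (for some $0 \le i \le n-1$) in one of two orientations; both are always legal moves, since any induced source or sink is confined to $i$ or $i+1$, which can only be leaves of $P_{n+1}$. By the flip state isomorphism, it suffices to consider $S_i := \{(i,i+1)\}$ for each $i$. I then claim $S_i \cong T_i \sqcup \cev{T_{n-1-i}}$. The subgraph $U(S_i)$ splits into two disjoint paths, one on $\{0,\ldots,i\}$ and one on $\{i+1,\ldots,n\}$, and these match (as graphs) the unmarked subgraphs of $T_i$ and $\cev{T_{n-1-i}}$. The natural graph isomorphism, identity on the left and reversal on the right, can be checked to induce a state isomorphism via Theorem \ref{sufficient}: the tail $i$ of $S_i$ corresponds to the tail of $T_i$, the head $i+1$ of $S_i$ corresponds to the head of $\cev{T_{n-1-i}}$, and the two external leaves $0$ and $n$ of $P_{n+1}$ correspond to the two ``remaining'' unmarked leaves of the twig graphs. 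Since $g(\cev{T_{n-1-i}}) = g(T_{n-1-i}) = n-1-i$ (the flip being a state isomorphism), we obtain $g(S_i) = i \oplus (n-1-i)$.

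It remains to compute $g(R_n) = \mathrm{mex}\{\,i \oplus (n-1-i) : 0 \le i \le n-1\,\}$. Because $i + (n-1-i) = n-1$, the XOR $i \oplus (n-1-i)$ always has the same parity as $n-1$. If $n$ is even, every value in the set is odd, so the mex is $0 = \overline{n}$. If $n$ is odd, every value is even, and the choice $i = (n-1)/2$ contributes $0$ to the set, so the mex is $1 = \overline{n}$.

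The main obstacle is verifying the state isomorphism $S_i \cong T_i \sqcup \cev{T_{n-1-i}}$: one must choose the correct orientation for the second twig so that the head at $i+1$ in $S_i$ is matched by a head in $\cev{T_{n-1-i}}$ rather than by the tail in $T_{n-1-i}$, and then carefully confirm that the head-plus-leaves and tail-plus-leaves sets correspond under the graph isomorphism. The boundary cases $i=0$ and $i=n-1$ require no special treatment, since $T_0$ is terminal with Grundy value $0$ and contributes trivially to the nim sum.
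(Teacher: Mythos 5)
Your proof is correct and takes essentially the same approach as the paper: each follower of $R_n$ is identified (up to flips) with a disjoint union $T_i\sqcup T_{n-1-i}$ of twigs, giving Grundy value $i\oplus(n-1-i)$, and the mex is then determined by the parity of $n-1$ together with the observation that $i=(n-1)/2$ yields $0$ when $n$ is odd. You are merely more explicit than the paper about the flip on one component and the verification of the state isomorphism.
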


\begin{proof}
We use induction.  Both followers of $R_1$ are isomorphic to $T_0$, so $g(R_1)=1$. For $n\ge 1$, each follower of $R_n$ is isomorphic to some $T_k\sqcup T_{n-1-k}$, for $k\in\{0,\ldots,n-1\}$.  By induction, $g(T_k\sqcup T_{n-1-k})=k\oplus (n-1-k)$.  When $n$ is even, the elements of $g(\mathcal{F}(T_n))$ are all odd, so $g(T_n)=0=\ol n$.  When $n$ is odd, the elements of $g(\mathcal{F}(T_n))$ are all even, and $0=(n-1)/2\oplus (n-1)/2\in g(\mathcal{F}(T_n))$, so $g(T_n)=1=\ol n$.
\end{proof}

\section{3-Legged Spider Graphs}

\begin{definition}
Let $a$, $b$, and $c$ be nonnegative integers with $abc\ne 0$.  Then the \emph{$3$-legged spider graph} $S(a,b,c)$ has vertex set $\{u_0=v_0=w_0, u_1,\ldots,u_{a},v_1,\ldots,v_{b},\allowbreak w_1,\ldots,w_{c}\}$ and edge set $$\{\{u_0,u_1\},\ldots,\{u_{a-1},u_{a}\},\{v_0,v_1)\},\ldots,\{v_{b-1},v_{b}\}, \{w_0,w_1\},\ldots,\{w_{c-1},w_{c}\}\}.$$  
The vertex $u_0=v_0=w_0$ is the \emph{hub} of $S(a,b,c)$.  The subgraphs induced by the sets $\{u_0,\ldots,u_a\}$, $\{v_0,\ldots,v_b\}$, and $\{w_0,\ldots,w_c\}$ of vertices are the \emph{legs} of $S(a,b,c)$.  The \emph{length} of a leg is its number of edges.  A leg is \emph{even} or \emph{odd} if it has even or odd length.  Note that we allow legs to have length $0$, but not all three legs at the same time, lest the hub be an isolated vertex. 

\begin{figure}[H]
\centering
\begin{tikzpicture}[scale=.75]
\draw
{
(0,0)node[label={[label distance=.5mm]right:$u_0=v_0=w_0$}]{}
-- (0,1)node[label={[label distance=.5mm]right:$u_1$}]{}
(0,2)node[label={[label distance=.5mm]right:$u_{a-1}$}]{}
-- (0,3)node[label={[label distance=.5mm]right:$u_a$}]{}
(0,0)
-- (210:1)node[label={[label distance=.5mm]300:$v_1$}]{}
(210:2)node[label={[label distance=.2mm]300:$v_{b-1}$}]{}
-- (210:3)node[label={[label distance=.5mm]300:$v_b$}]{}
(0,0)
-- (330:1)node[label={[label distance=.2mm]240:$w_1$}]{}
(330:2)node[label={[label distance=-.8mm]240:$w_{c-1}$}]{}
-- (330:3)node[label={[label distance=.2mm]240:$w_c$}]{}
};
\draw[style=dotted, thick]
{
(0,1) -- (0,2)
(210:1) -- (210:2)
(330:1) -- (330:2)
};
\end{tikzpicture}
    \caption{The $3$-legged spider graph $S(a,b,c)$.}
    \label{fig:spider}
\end{figure}
\end{definition}

It will be convenient to introduce notation for decorations of a spider graph with arrows marked only on the leaf edges.

\begin{definition} 
For any $a, b, c\ge 0$, the decoration $S[\h a,\h b,\h c]$ of the graph $S(a+1,b+1,c+1)$ is defined to be $\{(a,a+1),(b,b+1),(c,c+1)\}$.  Note that all three arrows point away from the hub, and the legs have $a$, $b$, and $c$ unmarked edges, respectively.  In order to change the direction of an arrow, we turn the corresponding accent upside down.  For example, $S[\h a,\v b,\h c]$ denotes the state $\{(a,a+1),(b+1,b),(c,c+1)\}$.  

If we want to remove an arrow along with the corresponding edge of the graph, we write no accent.  For example, $S[a,b,\h c]$ denotes the state $\{(c,c+1)\}$ of the spider graph $S(a,b,c+1)$.  Note that $a$, $b$, and $c$ still give the number of unmarked edges of each of the legs.  Figure \ref{fig:marked_spider} displays the state $S[2,\d 2,\u 3]$ of the graph $S(2,3,4)$. 

\begin{figure}[H]
\centering
\begin{tikzpicture}[scale=.7]
\draw
{
(0,0)node{}
-- (0,1)node{}
-- (0,2)node[label={[label distance=.5mm]above:$u_2$}]{}
(0,0)
-- (210:1)node{}
-- (210:2)node{}
-- (210:3)node[label={[label distance=.5mm]210:$v_3$}]{}
(0,0)
-- (330:1)node{}
-- (330:2)node{}
-- (330:3)node{}
-- (330:4)node[label={[label distance=.5mm]330:$w_4$}]{}
};
\begin{scope}[decoration={markings, 
    mark=at position 0.58 with {\arrow{Straight Barb[scale=1.5]}}}
    ] 
\draw[postaction={decorate}]{(210:3) -- (210:2)};
\draw[postaction={decorate}]{(330:3) -- (330:4)};
\end{scope}
\end{tikzpicture}
    \caption{The state $S[2,\d 2,\u 3]$ of the spider graph $S(2,3,4)$.}
    \label{fig:marked_spider}
\end{figure}

If we do not wish to specify the direction of a particular arrow, perhaps because the outcome of a calculation does not depend on it, then we use a tilde. For example, $S[\t a,b,\t c]$ represents any of the four decorations $S[\h a,b,\h c]$, $S[\h a,b,\v c]$, $S[\v a,b,\h c]$, or $S[\v a,b,\v c]$ of the spider graph $S(a+1,b,c+1)$.

In order to specify a relationship between the directions of certain arrows without specifying the individual directions, we use double accents.  For example, $S[\vh a,b,\hv c]$ represents either the second or third decoration listed above.  
\end{definition}

\begin{proposition}\label{basic_isos}
For any positive integer $n$, we have the following state isomorphism and Grundy value:  
\begin{align*}
    S[\v 0,\v0,\v n]&\cong C_n & g(S[\v0,\v0,\v n])&=\ol n\oplus 1.
\end{align*}
Likewise, for any nonnegative integer $n$, we have:
\begin{align*}
    S[\v0,\v0,\h n]&\cong F_n & g(S[\v0,\v0,\h n])&=\ol n \\
    S[\v0,\h0, \t n]&\cong T_n & g(S[\v0,\h0, \t n])&=n \\
    S[\v0,\v1,\v n]&\cong C_{n+1} & g(S[\v0,\v1,\v n])&=\ol n \\
    S[\v0,\v1,\h n]&\cong F_{n+1} & g(S[\v0,\v1,\h n])&=\ol n\oplus 1 \\
    S[\v0,\h1, \d n]&\cong T_1\sqcup \cev{T}_n & g(S[\v0,\h1, \d n])&= n\oplus 1 \\    S[\v0,\h1, \u n]&\cong T_1\sqcup T_n & g(S[\v0,\h1, \u n])&= n\oplus 1 \\
    S[\v1,\h1, \d n]&\cong F_2\sqcup \cev{T}_n & g(S[\v1,\h1, \d n])&= n\\
    S[\v1,\h1, \u n]&\cong F_2\sqcup T_n & g(S[\v1,\h1, \u n])&= n.
\end{align*}
\end{proposition}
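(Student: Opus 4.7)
The plan is to establish each of the nine isomorphism/Grundy-value claims individually, following a uniform template: exhibit an explicit bijection $a\colon A(U(X))\to A(U(Y))$ commuting with the flip operation, verify that $a$ is a state isomorphism by invoking either Corollary \ref{local_iso} or the full Local Criterion (Theorem \ref{local_crit}), and then read off the claimed Grundy value from the identity $g(X\sqcup Y)=g(X)\oplus g(Y)$ together with the previously established values $g(F_n)=\ol n$, $g(C_n)=\ol n\oplus 1$, and $g(T_n)=n$.

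For the first five isomorphisms, whose right-hand sides are not disjoint unions, $U(X)$ and $U(Y)$ are both path graphs of the same length, so the bijection $a=a_f$ is induced by the obvious vertex-by-vertex graph isomorphism $f\colon U(X)\to U(Y)$. By Corollary \ref{local_iso}, the verification reduces to checking local state isomorphism at each vertex of $U(X)$. The interesting vertex is the hub $u_0$: in every case, the marked leaf arrows at $u_0$ impose exactly the same sink/source obstructions on the adjacent unmarked edges as the marked leaf arrows at the corresponding vertex of $U(Y)$ do. For example, in $S[\v 0,\v 0,\v n]\cong C_n$, the two incoming leaf arrows at $u_0$ together forbid any arrow $(w_1,u_0)$, matching the restriction at vertex $0$ of $U(C_n)$ imposed by its single incoming leaf arrow $(-1,0)$. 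The other vertices of $U(X)$ are leaves or degree-$2$ interior vertices of a path, and the correspondence at them is immediate.

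For the last four isomorphisms, $U(X)$ is connected (a single path in cases 6 and 7, a three-legged spider in cases 8 and 9) while $U(Y)$ is a disjoint union of two components. No graph isomorphism exists between them, so we instead take $a$ to be the bijection induced by a graph homomorphism $g\colon U(Y)\to U(X)$ that identifies one distinguished vertex from each component of $U(Y)$ with the hub $u_0$. In cases 6 and 7, the distinguished vertices are the endpoints of each summand closest to its marked leaf arrow; in cases 8 and 9, they are the middle vertex of the $F_2$-summand and the leaf endpoint of the $T_n$-summand. Since $g$ is injective away from the hub, the full Local Criterion (Theorem \ref{local_crit}) is verified easily at every non-hub vertex by the same sort of local path check as in the first five cases.

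The main obstacle is local state-preservation at the hub, where the two components of $U(Y)$ glue together: a flower of $X$ at $u_0$ maps under $a^+$ to the union of partial flowers at the two distinguished vertices of $U(Y)$, which is not itself a flower of $Y$, so one must check the state/non-state status directly. The head-flower of $X$ at $u_0$ always fails to be a state, because adding all incoming arrows at $u_0$ either makes $u_0$ itself a sink or creates a sink/source violation at an adjacent vertex already carrying a marked leaf arrow; correspondingly, its image under $a^+$ always fails, because at least one distinguished vertex of $U(Y)$ receives a sink/source violation in its own component. A symmetric analysis handles the tail-flower, and the verification of $(a^+)^{-1}$ at the two distinguished vertices of $U(Y)$ proceeds identically. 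With all isomorphisms in hand, the Grundy values follow at once; for example, $g(S[\v 0,\h 1,\d n])=g(T_1)\oplus g(\cev T_n)=1\oplus n$.
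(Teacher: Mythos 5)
Your overall strategy is exactly the paper's: induced bijections on arrows, verified via Corollary \ref{local_iso} or Theorem \ref{local_crit}, with Grundy values read off from the nim-sum formula. The first five cases and your choice of distinguished vertices in cases 8 and 9 (the middle vertex of $U(F_2)$ and the leaf end of $U(T_n)$, both sent to the hub) match the paper's construction; the only cosmetic difference is that the paper dispatches the first block with Theorem \ref{sufficient} rather than Corollary \ref{local_iso}.

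However, your prescription for cases 6 and 7 is wrong, and not just imprecise. You glue to the hub the endpoints of $U(T_1)$ and $U(T_n)$ \emph{closest} to their marked leaf arrows, i.e.\ vertex $1$ of $P_3$ and vertex $n$ of $P_{n+2}$; the correct choice (the paper's) is the opposite endpoints, the vertices $0$, which are leaves of the ambient paths. The distinction matters because vertex $1$ of $P_3$ is an \emph{internal} vertex already carrying the outgoing arrow $(1,2)$ of $T_1$, so the decoration $T_1\cup\{(1,0)\}$ has an internal source and is not a state; yet under your gluing this arrow corresponds to adding $(v_0,v_1)$ at the hub of $S[\v0,\h1,\u n]$, which \emph{is} a legal move there, since the hub carries the incoming arrow $(u_1,u_0)$ and so can never become a source. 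Thus your $a^+$ sends the state $S[\v0,\h1,\u n]\cup\{(v_0,v_1),(w_0,w_1)\}$ (the tail-flower at the hub, which is a state) to a non-state, and it is not a state isomorphism. This also exposes a second slip: your claim that the head-flower at the hub ``always fails'' with a ``symmetric analysis'' for the tail-flower is false in cases 6 and 7, where the tail-flower at the hub is a state precisely because of the incoming arrow $(u_1,u_0)$; the two flowers must be treated asymmetrically, and doing so would have revealed the incorrect gluing. With the distinguished vertices replaced by the leaf endpoints of each summand, the rest of your argument goes through as in the paper.
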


\begin{proof}
The isomorphism from $S[\v 0,\allowbreak \v0,\allowbreak \v n]$ to $C_n$ is induced by the graph isomorphism $U(S[\v 0,\v0,\allowbreak \v n])\allowbreak =S(0,0,n)\to U(C_n)=P_{n+1}$ which maps $w_k$ to $k$, for $k\in \{0,1,\ldots,n\}$.  We have $h(S[\v 0,\v0,\v n])\cup L(S(1,1,n+1))=\{w_0, w_n\}$ and $h(C_n)\cup L(P_{n+3})=\{0,n\}$, with $t(S[\v 0,\v0,\v n])\cup L(S(1,1,n+1))$ and $t(C_n)\cup L(P_{n+3})$ both empty, so Theorem \ref{sufficient} applies.  

This same graph isomorphism induces state isomorphisms $S[\v0,\v0,\h n]\to F_n$,  $S[\v0,\allowbreak \h0, \d n] \to \cev{T_n}$, and $S[\v0,\h0, \u n]\to T_n$.  

The isomorphism from $S[\v 0,\v1,\v n]$ to $C_{n+1}$ is induced by the graph isomorphism $f:U(S[\d0,\d1,\d n])=S(0,1,n)\to U(C_{n+1})=P_{n+2}$ which maps $v_1$ to $0$ and $w_k$ to $k+1$, for $k\in\{0,1,\ldots, n\}$.  
The only flowers of $S[\d0,\d1,\d n]$ which are states are the tail-flowers at $v_1$ and $w_{n}$.  The only flowers of $C_{n+1}$ which are states are the tail-flowers at $0$ and $n+1$.  Therefore, $a_f$ is a local state isomorphism at every vertex of $S(0,1,n)$, and Corollary \ref{local_iso} implies that $a_f$ is a state isomorphism.  

\begin{figure}[H]
\centering
\begin{subfigure}{.4\textwidth}  
\centering
\begin{tikzpicture}[scale=.75]
\draw
{
(-1,0)node{}
-- (0,0)node[label={[label distance=.7mm]below:$v_1$}]{}
-- (1,0)node[label={[label distance=.7mm]below:$w_0$}]{}
-- (2,0)node[label={[label distance=.7mm]below:$w_1$}]{}
(3,0)node[label={[label distance=-1.1mm]below:$w_{n-1}$}]{}
-- (4,0)node[label={[label distance=.7mm]below:$w_{n}$}]{}
-- (5,0)node{}
};
\draw[style=dotted, thick]
{
(2,0) -- (3,0)
};
\begin{scope}[decoration={markings, 
    mark=at position 0.58 with {\arrow{Straight Barb[scale=1.5]}}}
    ] 
\draw[postaction={decorate}]{(-1,0) -- (0,0)};
\draw[postaction={decorate}]{(5,0) -- (4,0)};
\draw[postaction={decorate}]{(1,1)node{} -- (1,0)};
\end{scope}
\end{tikzpicture}
\end{subfigure}\hspace{1cm}%
\begin{subfigure}{.4\textwidth}  
\centering
\begin{tikzpicture}[scale=.75]
\draw
{
(-1,0)node{}
-- (0,0)node[label={[label distance=.7mm]below:$0$}]{}
-- (1,0)node[label={[label distance=.7mm]below:$1$}]{}
-- (2,0)node[label={[label distance=.7mm]below:$2$}]{}
(3,0)node[label={[label distance=.8mm]below:$n$}]{}
-- (4,0)node[label={[label distance=-2.3mm]below:$n+1$}]{}
-- (5,0)node{}
};
\draw[style=dotted, thick]
{
(2,0) -- (3,0)
};
\begin{scope}[decoration={markings, 
    mark=at position 0.58 with {\arrow{Straight Barb[scale=1.5]}}}
    ] 
\draw[postaction={decorate}]{(-1,0) -- (0,0)};
\draw[postaction={decorate}]{(5,0) -- (4,0)};
\end{scope}
\end{tikzpicture}
\end{subfigure}
    \caption{The isomorphic states $S[\d0,\d1,\d n]$ and $C_{n+1}$.}
    \label{fig:spider_crash}
\end{figure}

The same graph isomorphism induces a state isomorphism $S[\v0,\v1,\h n]\to F_{n+1}$.

An isomorphism from $X=T_1\sqcup T_n$ to $Y=S[\d0,\u1,\u n]$ is induced by the graph homomorphism $f:U(T_1\sqcup T_n)=P_{2}+P_{n+1}\to U(S[\d0,\u1,\u n])=S(0,1,n)$ which maps vertex $k$ in $P_2$ to $v_k$ and vertex $l$ in $P_{n+1}$ to $w_l$.  We observe that $a_f$ is a local state isomorphism at the vertex $1$ of $P_2$ and at the vertices $1, 2, \ldots, n$ of $P_{n+1}$.  It remains to check that $a_f^+$ is locally state-preventing at $0$ in $P_2$ and at $0$ in $P_{n+1}$, and that $(a_f^+)^{-1}$ is locally state-preventing at $w_0$.  

Of the flowers at these three vertices, only the head-flower at $0$ in $P_2$ and the head-flower at $w_0$ in $U(Y)$ are not states.  Applying $a_f^+$ to the first of these flowers, we obtain $Y\cup \{(v_1,w_0)\}$, which has a source at $v_1$.  Furthermore, $(a_f^+)^{-1}Y_{h}(w_0)$ has a source at $1$ in $P_2$.  By Theorem \ref{local_crit}, $a_f$ is a state isomorphism.  

\begin{figure}[H]
\centering
\begin{subfigure}{.45\textwidth}  
\centering
\begin{tikzpicture}[scale=.75]
\draw
{
(0,1)node{}
-- (1,1)node[label={[label distance=.7mm]above:$1$}]{}
-- (2,1)node[label={[label distance=.7mm]above:$0$}]{}
(2,0)node[label={[label distance=.7mm]below:$0$}]{}
-- (3,0)node[label={[label distance=.7mm]below:$1$}]{}
(4,0)node[label={[label distance=-1.7mm]below:$n-1$}]{}
-- (5,0)node[label={[label distance=1.3mm]below:$n$}]{}
-- (6,0)node{}
};
\draw[style=dotted, thick]
{
(3,0) -- (4,0)
};
\begin{scope}[decoration={markings, 
    mark=at position 0.58 with {\arrow{Straight Barb[scale=1.5]}}}
    ] 
\draw[postaction={decorate}]{(1,1) -- (0,1)};
\draw[postaction={decorate}]{(5,0) -- (6,0)};
\end{scope}
\end{tikzpicture}
\end{subfigure}\hspace{0.5cm}%
\begin{subfigure}{.45\textwidth}  
\centering
\begin{tikzpicture}[scale=.75]
\draw
{
(-1,0)node{}
-- (0,0)node[label={[label distance=0mm]below:$v_1$}]{}
-- (1,0)node[label={[label distance=0mm]below:$w_0$}]{}
-- (2,0)node[label={[label distance=0mm]below:$w_1$}]{}
(3,0)node[label={[label distance=0-2mm]below:$w_{n-1}$}]{}
-- (4,0)node[label={[label distance=0mm]below:$w_{n}$}]{}
-- (5,0)node{}
};
\draw[style=dotted, thick]
{
(2,0) -- (3,0)
};
\begin{scope}[decoration={markings, 
    mark=at position 0.58 with {\arrow{Straight Barb[scale=1.5]}}}
    ] 
\draw[postaction={decorate}]{(0,0) -- (-1,0)};
\draw[postaction={decorate}]{(4,0) -- (5,0)};
\draw[postaction={decorate}]{(1,1)node{} -- (1,0)};
\end{scope}
\end{tikzpicture}
\end{subfigure}
    \caption{The isomorphic states $T_1\sqcup T_n$ and $S[\d0,\u1,\u n]$.}
    \label{fig:twigs_iso}
\end{figure}

The same graph homomorphism gives a state isomorphism $T_1\sqcup \cev{T}_n\to S[\v0,\h1, \d n]$

An isomorphism from $X=F_2\sqcup T_{n}$ to $Y=S[\d1,\u1,\u n]$ is given by the graph homomorphism $f:U(F_2\sqcup T_n)=P_{3}+P_{n+1}\to U(S[\d1,\u1,\u n])=S(1,1,n)$ which maps vertex $0$ in $P_3$ to $u_1$, $1$ to $w_0$, and $2$ to $v_1$, and maps vertex $k$ in $P_{n+1}$ to $w_k$.  We observe that $a_f$ is a local state isomorphism at $0$ and $2$ in $P_3$ and at $1, 2, \ldots, n$ in $P_{n+1}$.  

With respect to the vertex $1$ of $P_3$, none of $X_t(1)$, $X_h(1)$, $a_f^+(X_t(1))$, or $a_f^+(X_h(1))$ are states, so $a_f^+$ is locally state-preventing at $1$ in $P_3$.  Both of the flowers at $0$ in $P_{n+1}$ are states, so $a_f^+$ is locally state-preventing at $0$ in $P_{n+1}$.  Finally, none of $Y_t(w_0)$, $Y_h(w_0)$, $(a_f^+)^{-1}Y_t(w_0)$, or $(a_f^+)^{-1}Y_h(w_0)$ are states, so $(a_f^+)^{-1}$ is locally state-preventing at $w_0$.  By Theorem \ref{local_crit}, $a_f$ is a state isomorphism.  

\begin{figure}[H]
\centering
\begin{subfigure}{.45\textwidth}  
\centering
\begin{tikzpicture}[scale=.75]
\draw
{
(1,2)node{}
-- (1,1)node[label={[label distance=.7mm]left:$0$}]{}
-- (1,0)node[label={[label distance=.7mm]left:$1$}]{}
-- (1,-1)node[label={[label distance=.7mm]left:$2$}]{}
-- (1,-2)node{}
(2,0)node[label={[label distance=.7mm]below:$0$}]{}
-- (3,0)node[label={[label distance=.7mm]below:$1$}]{}
(4,0)node[label={[label distance=-1.7mm]below:$n-1$}]{}
-- (5,0)node[label={[label distance=1.3mm]below:$n$}]{}
-- (6,0)node{}
};
\draw[style=dotted, thick]
{
(3,0) -- (4,0)
};
\begin{scope}[decoration={markings, 
    mark=at position 0.58 with {\arrow{Straight Barb[scale=1.5]}}}
    ] 
\draw[postaction={decorate}]{(1,2) -- (1,1)};
\draw[postaction={decorate}]{(1,-1) -- (1,-2)};
\draw[postaction={decorate}]{(5,0) -- (6,0)};
\end{scope}
\end{tikzpicture}
\end{subfigure}\hspace{0.5cm}%
\begin{subfigure}{.45\textwidth}  
\centering
\begin{tikzpicture}[scale=.75]
\draw
{
(1,2)node{}
-- (1,1)node[label={[label distance=0mm]left:$u_1$}]{}
-- (1,0)node[label={[label distance=0mm]left:$w_0$}]{}
-- (1,-1)node[label={[label distance=0mm]left:$v_1$}]{}
-- (1,-2)node{}
-- (1,0)
-- (2,0)node[label={[label distance=0mm]below:$w_1$}]{}
(3,0)node[label={[label distance=0-2mm]below:$w_{n-1}$}]{}
-- (4,0)node[label={[label distance=0mm]below:$w_{n}$}]{}
-- (5,0)node{}
};
\draw[style=dotted, thick]
{
(2,0) -- (3,0)
};
\begin{scope}[decoration={markings, 
    mark=at position 0.58 with {\arrow{Straight Barb[scale=1.5]}}}
    ] 
\draw[postaction={decorate}]{(1,2) -- (1,1)};
\draw[postaction={decorate}]{(4,0) -- (5,0)};
\draw[postaction={decorate}]{(1,-1) -- (1,-2)};
\end{scope}
\end{tikzpicture}
\end{subfigure}
    \caption{The isomorphic states $F_2\sqcup T_n$ and $S[\d1,\u1,\u n]$.}
    \label{fig:flow_twig_iso}
\end{figure}
The same graph homomorphism gives a state isomorphism $F_2\sqcup \cev{T_n}\to S[\d1,\u1,\d n]$.
\end{proof}

\section{Even- and Odd-Moves States}

For some positions in the Trimmed Game of Arrows, the parity of the number of remaining moves is predetermined.  The state corresponding to such a position always has Grundy value $0$ or $1$, depending on the aforementioned parity.  

\begin{definition}
A state $X$ is called an \emph{even-moves state} if for every terminal descendent $X^*$ of $X$, $\lvert X^*\setminus X\rvert$ is even.  Similarly, a state $Y$ is called an \emph{odd-moves state} if for every terminal descendent $Y^*$ of $Y$, $\lvert Y^*\setminus Y\rvert$ is odd.  An even-moves state $X$ corresponds to a game position that will lead to a loss for the player whose turn it is, no matter which moves the two players make.  In particular, $g(X)=0$.  An odd-moves state $Y$ corresponds to a game position that will lead to a win for the player whose turn it is, no matter which moves the two players make.  Every follower of an odd-moves state $Y$ is an even-moves state, so $g(\mathcal{F}(Y))=\{0\}$ and $g(Y)=1$.

We write $p(n)$ for the parity (even or odd) of an integer $n$, so that we can then write \emph{$p(n)$-moves state} to mean even- or odd-moves state, respectively. With this notation, a $p(n)$-moves state $X$ has $g(X)=\ol n$. 

If a state $X$ is a $p(n)$-moves state, then so is any state isomorphic to $X$.  A follower of a $p(n)$-moves state is a $p(n-1)$-moves state.  A disjoint union of a $p(n)$-moves state and a $p(m)$-moves state is a $p(n+m)$-moves state.  
\end{definition}

\begin{proposition}
For $n\ge 0$, $F_n$ and $C_{n+1}$ are $p(n)$-moves states. 
\end{proposition}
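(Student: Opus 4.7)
The plan is to prove both claims simultaneously by induction on $n$, essentially mirroring the earlier computation of $g(F_n)$ and $g(C_n)$ but now tracking the parity of the number of remaining moves rather than Grundy values.

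For the base case $n=0$: the state $F_0$ has no unmarked edges, so it is terminal and vacuously a $p(0)$-moves state. Likewise $C_1$ is terminal and hence a $p(0)$-moves state.

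For the inductive step, I assume the proposition holds for all nonnegative integers less than $n$, and I reuse the follower analysis already established in the proof of $g(F_n)=\bar n$ and $g(C_n)=\bar n\oplus 1$. Specifically, every follower of $F_n$ is isomorphic to $F_k\sqcup F_{n-1-k}$ for some $k\in\{0,\ldots,n-1\}$ or to $C_k\sqcup C_{n-1-k}$ for some $k\in\{1,\ldots,n-2\}$; by the inductive hypothesis these are a $p(k)$-moves state disjoint-unioned with a $p(n-1-k)$-moves state, or a $p(k-1)$-moves state disjoint-unioned with a $p(n-2-k)$-moves state, respectively. In either case, invoking the stated fact that a disjoint union of a $p(a)$-moves state and a $p(b)$-moves state is a $p(a+b)$-moves state, each follower of $F_n$ is a $p(n-1)$-moves state. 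This suffices to conclude that $F_n$ itself is a $p(n)$-moves state.

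The argument for $C_{n+1}$ is analogous: its followers are isomorphic to $F_k\sqcup C_{n-k}$ for $k\in\{0,\ldots,n-1\}$ or $C_k\sqcup F_{n-k}$ for $k\in\{1,\ldots,n\}$, each of which inherits the $p(n-1)$-moves property by induction (using that $C_{n-k}$ is a $p(n-k-1)$-moves state and $C_k$ is a $p(k-1)$-moves state). Since every follower of $C_{n+1}$ is a $p(n-1)$-moves state, $C_{n+1}$ is a $p(n)$-moves state. There is no real obstacle here, since all the necessary isomorphisms were already worked out in the Grundy value computation; the only subtlety is observing that the parity bookkeeping balances identically for the $F\sqcup F$, $C\sqcup C$, $F\sqcup C$, and $C\sqcup F$ cases, which it does because incrementing the $C$-subscript costs exactly one move relative to the $F$-subscript.
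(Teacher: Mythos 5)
Your proof is correct and follows essentially the same route as the paper's: induction on $n$ with the terminal base cases $F_0$ and $C_1$, the same decomposition of followers into $F\sqcup F$, $C\sqcup C$, $F\sqcup C$, and $C\sqcup F$ disjoint unions, and the additivity of the $p(\cdot)$-moves property under disjoint union. The only difference is that you write out the $C_{n+1}$ case explicitly where the paper calls it analogous.
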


\begin{proof}
We use induction.  $F_0$ and $C_1$ are terminal, so they are clearly $p(0)$-moves states.  

Assume the result is known for all $F_k$ and $C_k$ with $k<n$, where $n$ is a positive integer.  Each follower of $F_n$ is isomorphic to either $F_k\sqcup F_{n-1-k}$, for some $k\in\{0, \ldots, n-1\}$, or $C_k\sqcup C_{n-1-k}$, for some $k\in \{1,\ldots,n-2\}$.  By assumption, these are both $p(n-1)$-moves states, since $k+(n-1-k)=n-1$ and $(k-1)+(n-2-k)=n-3$.  Therefore, $F_n$ is a $p(n)$-moves state.  The argument for $C_{n}$ is analogous.
\end{proof}

\begin{definition}
We denote descendents of a state such as $S[\v a,\v b,c]$ by indicating the positions and directions of additional arrows within parentheses. For example, we write $S[\v a(\v i),\v b,c(\h k,\v l)]$ for the state $S[\v a,\v b,c]\cup \{(u_i,u_{i-1}), (w_{k-1},w_k),\allowbreak (w_l,w_{l-1})\}$ of the graph $S(a+1,b+1,c)$.  As a second example, $S[\v{\h a}(\h{\v i}),b,c]$ represents either of the following two states of the graph $S(a+1,b,c)$:
\begin{align*}
    S[\h a(\v i),b,c] & =\{(u_a,u_{a+1}),(u_i,u_{i-1})\}\\
    S[\v a(\h i),b,c] & =\{(u_{a+1},u_{a}),(u_{i-1},u_i)\}.
\end{align*}
As a third example, $S[a,\h b(\h b),c]$ denotes the state $\{(v_b,v_{b+1}),(v_{b-1},v_b)\}$ of the graph $S(a,b+1,c)$. As a final example, the states $S[a(\h a),b(\h b),c(\h c)]$ and $S[\widehat{a-1},\allowbreak \widehat{b-1},\allowbreak \widehat{c-1}]$ of the graph $S(a,b,c)$ are equal.  The state $S[2,\d2(\d2),1(\u1)]$ is displayed in Figure \ref{fig:spider_descendent}.
\end{definition}

\begin{figure}[H]
\centering
\begin{tikzpicture}[scale=.7]
\draw
{
(0,0)node{}
-- (0,1)node{}
-- (0,2)node[label={[label distance=0mm]90:$u_2$}]{}
(0,0)
-- (210:1)node{}
-- (210:2)node{}
-- (210:3)node[label={[label distance=.5mm]210:$v_3$}]{}
(0,0)
-- (330:1)node[label={[label distance=.5mm]310:$w_1$}]{}
};
\begin{scope}[decoration={markings, 
    mark=at position 0.58 with {\arrow{Straight Barb[scale=1.5]}}}
    ] 
\draw[postaction={decorate}]{(210:3) -- (210:2)};
\draw[postaction={decorate}]{(210:2) -- (210:1)};
\draw[postaction={decorate}]{(330:0) -- (330:1)};
\end{scope}
\end{tikzpicture}
    \caption{The state $S[2,\d2(\d2),1(\u1)]$ of the spider graph $S(2,3,1)$.}
    \label{fig:spider_descendent}
\end{figure}

The next proposition computes the Grundy values of the states that were missing from Proposition \ref{basic_isos}.

\begin{proposition}\label{11n}
$S[\v1,\v1,\v n]$ is a $p(n+1)$-moves state, while $S[\v1,\v1,\h n]$ is a $p(n)$-moves state.  As a result,
\begin{align*}
    g(S[\v1,\v1,\v n])&=\ol n\oplus 1,\\
    g(S[\v1,\v1,\h n])&=\ol n.
\end{align*}
\end{proposition}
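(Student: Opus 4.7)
The plan is to prove both claims simultaneously by induction on $n$, using the fact (from Section 7) that a state is a $p(k)$-moves state whenever every follower is a $p(k-1)$-moves state. Once moves-parity is established, the stated Grundy values follow immediately. For the base case $n=0$, I would enumerate moves directly. In $S[\v1,\v1,\v 0]$, both $(u_1,u_0)$ and $(v_1,v_0)$ are forced (the opposite directions create sinks at $u_1$ or $v_1$), but after either is played the hub already has two incoming arrows (including the existing $(w_1,w_0)$), so neither orientation of the remaining hub edge is legal; the game ends after exactly one move, giving parity $p(1)$. In $S[\v1,\v1,\h 0]$ the hub has one outgoing arrow, which permits both $(u_1,u_0)$ and $(v_1,v_0)$ to be played in succession before the remaining hub edge becomes unplayable, giving parity $p(0)$.

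For the inductive step on $X=S[\v1,\v1,\v n]$ with $n\ge 1$, I would enumerate all followers and identify each, via the state-isomorphism machinery of Section~4, with a disjoint union of states whose parities are known. Marking $(u_1,u_0)$ (or symmetrically $(v_1,v_0)$) yields a state whose unmarked subgraph and external-arrow data match those of $S[\v 0,\v 1,\v n]\cong C_{n+1}$ from Proposition~\ref{basic_isos}, a $p(n)$-moves state. A ``down-the-leg'' move $(w_k,w_{k+1})$ with $0\le k\le n-2$ splits the state as $S[\v1,\v1,\h k]\sqcup C_{n-k-1}$, of combined parity $p(k)+p(n-k-2)=p(n)$ by the inductive hypothesis. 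A ``toward-hub'' move $(w_{k+1},w_k)$ with $1\le k\le n-1$ splits the state as $S[\v1,\v1,\v k]\sqcup \cev F_{n-k-1}$, of combined parity $p(k+1)+p(n-k-1)=p(n)$. Finally, the move $(w_1,w_0)$ detaches leg $w$ from the hub and produces a ``V''-shaped hub component with an incoming external arrow at every vertex, which Theorem~\ref{sufficient} identifies with the base-case state $S[\v1,\v1,\v 0]$; paired with the leaf-side component $\cev F_{n-1}$, this gives combined parity $p(1)+p(n-1)=p(n)$. The argument for $\hat X=S[\v1,\v1,\h n]$ is entirely analogous, with each decomposition involving a hub-centered $S[\v1,\v1,\v k]$ or $S[\v1,\v1,\h k]$ piece (by the inductive hypothesis, or by the base case for the V-component) plus a leaf-side $F$ or $\cev C$ piece; the parities combine to $p(n-1)$ in every case.

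The main obstacle will be verifying each state isomorphism carefully, most notably the identification of the ``V''-shaped hub component arising in the $(w_1,w_0)$ case with $S[\v1,\v1,\v 0]$, which crucially uses the base case of the induction via Theorem~\ref{sufficient} (it is exactly the fact that all three vertices of the V carry incoming external arrows that makes the match with $S[\v1,\v1,\v 0]$ work). A secondary subtlety is that the set of permitted moves on leg $w$ differs between $X$ and $\hat X$: the move $(w_n,w_{n-1})$ is allowed in $X$ (yielding a state isomorphic to $S[\v1,\v1,\v(n-1)]$) but forbidden in $\hat X$ (it would create a source at the internal vertex $w_n$), while the reverse $(w_{n-1},w_n)$ is forbidden in $X$ but allowed in $\hat X$ (yielding $S[\v1,\v1,\h(n-1)]$). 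Once each follower is correctly matched with a disjoint union of known states and the allowed-move bookkeeping is done, parity addition yields $p(n)$ (respectively, $p(n-1)$) uniformly and completes the joint induction.
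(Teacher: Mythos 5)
Your proof is correct, but it takes a genuinely different route from the paper's. The paper avoids induction on $n$ entirely: it classifies the terminal descendents $Z$ of $S[\v1,\v1,\v n]$ according to whether $Z$ contains $(w_1,w_0)$, contains $(w_0,w_1)$, or contains neither mark on the edge $w_0w_1$, argues that in each case terminality forces $Z$ to pass through one of a few specific states --- $S[\v1(\v1),\v1,\v n(\v1)]$ or its mirror, $S[\v1(\v1),\v1(\v1),\v n(\h1)]$, or $S[\v1(\v1),\v1(\v1),\v n(\v2)]$ --- which are isomorphic to $C_1\sqcup F_{n-1}$, $C_{n-1}$, and $C_1\sqcup F_{n-2}$ respectively, and then reads off the parity of $\lvert Z\setminus S[\v1,\v1,\v n]\rvert$ from the known moves-parities of flows and crashes. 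That costs only three isomorphism checks and no enumeration of followers. Your joint induction on $n$ instead enumerates every follower of $S[\v1,\v1,\t n]$ and decomposes each as a smaller hub-centered piece $S[\v1,\v1,\t k]$ (covered by the inductive hypothesis or the base case) disjoint-union a flow or crash on the detached portion of the long leg; this needs an isomorphism verification for each family of followers and a correct accounting of which moves are legal, and your handling of the asymmetry between $(w_{n-1},w_n)$ and $(w_n,w_{n-1})$ in the two states is the right bookkeeping. Both arguments are sound and yield the same parities and hence the same Grundy values; the paper's is shorter and cleverer, yours is more mechanical but also more self-contained in that it exhibits every follower explicitly.
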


\begin{proof}
A terminal descendent $Z$ of $S[\v1,\v1,\v n]$ must be a descendent of one of the two states  $S[\v1, \v1,\v n(\v 1)]$ and $S[\v1, \v1,\v n (\h 1)]$ or of neither.  
\begin{figure}[H]
\centering
\begin{subfigure}{.45\textwidth}  
\centering
\begin{tikzpicture}[scale=.75]
\draw
{
(120:2)node{}
-- (120:1)node[label={[label distance=0mm]30:$u_1$}]{}
-- (0,0)
-- (240:1)node[label={[label distance=0mm]-30:$v_1$}]{}
-- (240:2)node{}
(0,0)node{}
-- (1,0)node{}
-- (2,0)node{}
(3,0)node{}
-- (4,0)node[label={[label distance=0mm]below:$w_n$}]{}
-- (5,0)node{}
};
\draw[style=dotted, thick]
{
(2,0) -- (3,0)
};
\begin{scope}[decoration={markings, 
    mark=at position 0.58 with {\arrow{Straight Barb[scale=1.5]}}}
    ] 
\draw[postaction={decorate}]{(5,0) -- (4,0)};
\draw[postaction={decorate}]{(120:2) -- (120:1)};
\draw[postaction={decorate}]{(240:2) -- (240:1)};
\draw[postaction={decorate}]{(1,0) -- (0,0)};
\end{scope}
\end{tikzpicture}
\end{subfigure}\hspace{0.5cm}%
\begin{subfigure}{.45\textwidth}  
\centering
\begin{tikzpicture}[scale=.75]
\draw
{
(120:2)node{}
-- (120:1)node[label={[label distance=0mm]30:$u_1$}]{}
-- (0,0)
-- (240:1)node[label={[label distance=0mm]-30:$v_1$}]{}
-- (240:2)node{}
(0,0)node{}
-- (1,0)node{}
-- (2,0)node{}
(3,0)node{}
-- (4,0)node[label={[label distance=0mm]below:$w_n$}]{}
-- (5,0)node{}
};
\draw[style=dotted, thick]
{
(2,0) -- (3,0)
};
\begin{scope}[decoration={markings, 
    mark=at position 0.58 with {\arrow{Straight Barb[scale=1.5]}}}
    ] 
\draw[postaction={decorate}]{(5,0) -- (4,0)};
\draw[postaction={decorate}]{(120:2) -- (120:1)};
\draw[postaction={decorate}]{(240:2) -- (240:1)};
\draw[postaction={decorate}]{(0,0) -- (1,0)};
\end{scope}
\end{tikzpicture}
\end{subfigure}
    \caption{The states $S[\d1,\d1,\d n(\d1)]$ and $S[\d1,\d1,\d n(\u1)]$.}
    \label{fig:11spiders}
\end{figure}

If $Z$ is a descendent of $S[\v1,\v1,\v n(\v 1)]$, then $Z$ is also a descendent of $S[\v1(\v 1),\v1,\allowbreak \v n(\v 1)]$ or $S[\v1,\v1(\v1),\v n(\v1)]$, each of which is isomorphic to $C_1\sqcup F_{n-1}$, a $p(n+1)$-moves state. If $Z$ is a descendent of $S[\v1,\v1,\v n(\h1)]$, then $Z$ is also a descendent of $S[\v1(\v1),\v1(\v1),\v n(\h1)]$, which is isomorphic to $C_{n-1}$, a $p(n)$-moves state. Finally, if $Z$ is a descendent of neither $S[\v1,\v1,\v n(\v1)]$ nor $S[\v1,\v1,\v n(\h1)]$, then $Z$ is a descendent of $S[\v1(\v1),\v1(\v1),\v n(\v2)]$, which is isomorphic to $C_1\sqcup F_{n-2}$, a $p(n)$-moves state.   

In all three cases, $\lvert Z\setminus S[\v1,\v1,\v n]\rvert $ has parity opposite that of $n$.  Therefore, $S[\v1,\v1,\v n]$ is a $p(n+1)$-moves state and $g(S[\v1,\v1,\v n])=\ol n\oplus 1$. 

The argument for $S[\v1,\v1,\h n]$ is analogous.
\end{proof}

\begin{definition}
We say that states $X$ and $Y$ of a graph $G$ are \emph{associates} if they have a common follower.  \end{definition}

The unique common follower of distinct associates $X$ and $Y$ is $X\cup Y$. If $X$ and $Y$ are distinct associates, then $X=(X\cap Y)\cup\{x_0\}$ and $Y=(X\cap Y)\cup \{y_0\}$ for some arrows $x_0$ and $y_0$, which represent possible successive moves starting from the game position represented by $X\cap Y$.

\begin{proposition}\label{assoc_prop}
If every follower of a state $X$ of a graph $G$ is an associate of an odd-moves state of $G$, then $g(X)=0$.  If every follower of a state $X$ of $G$ is an associate of an even-moves follower of $X$, then $g(X)=1$.  
\end{proposition}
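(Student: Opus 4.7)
The plan is to compute $g(X)$ directly from the recursion $g(X) = \mathrm{mex}\, g(\mathcal{F}(X))$, using two elementary facts established earlier: an even-moves state has Grundy value $0$ and an odd-moves state has Grundy value $1$; and whenever $F$ and $Y$ are distinct associates, their unique common follower $F \cup Y$ is simultaneously a follower of both $F$ and $Y$.

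For the first claim, I would fix an arbitrary follower $F$ of $X$ and use the hypothesis to pick an odd-moves state $Y$ associated to $F$. If $F = Y$, then $F$ itself is an odd-moves state, so $g(F) = 1$. Otherwise $F \cup Y$ is a follower of $F$ that is also a follower of the odd-moves state $Y$, hence an even-moves state with $g(F \cup Y) = 0$, forcing $g(F) \ge 1$. In either case $g(F) \ne 0$, so $0 \notin g(\mathcal{F}(X))$ and therefore $g(X) = \mathrm{mex}\, g(\mathcal{F}(X)) = 0$.

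The second claim decomposes into verifying $0 \in g(\mathcal{F}(X))$ and $1 \notin g(\mathcal{F}(X))$ separately. The first is immediate from the hypothesis, which supplies an even-moves follower $F'$ of $X$ with $g(F') = 0$. For the second, let $F$ be any follower of $X$ and let $F'$ be an even-moves follower of $X$ associated to $F$; if $F = F'$ then $g(F) = 0$, and otherwise $F \cup F'$ is a follower of the even-moves state $F'$, hence an odd-moves state with $g(F \cup F') = 1$, forcing $g(F) \ne 1$. Combining the two observations yields $g(X) = 1$. I do not anticipate a genuine obstacle: the whole argument is essentially an unpacking of the associate relation together with the parity shift $p(n) \mapsto p(n-1)$ under taking followers, and the only care required is to separate the degenerate case where a follower coincides with its purported associate from the generic case in which the common follower $F \cup Y$ (respectively $F \cup F'$) carries the argument.
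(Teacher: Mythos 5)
Your proof is correct. The first half is essentially the paper's own argument: every follower $F$ of $X$ acquires an even-moves follower (the common follower $F\cup Y$ of $F$ and its odd-moves associate $Y$), so $0\in g(\mathcal{F}(F))$ and $g(F)\neq 0$, whence $g(X)=0$; your explicit handling of the degenerate case $F=Y$ is a small refinement the paper glosses over. The second half is where you genuinely diverge. The paper does not argue directly: it adjoins the one-edge rod $R_1$, observes that every follower of $X\sqcup R_1$ is then an associate of an odd-moves state of $G+P_2$, applies the first part to get $g(X\sqcup R_1)=0$, and concludes $g(X)=1$ from $g(X\sqcup R_1)=g(X)\oplus g(R_1)=g(X)\oplus 1$. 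You instead compute the mex directly, checking $0\in g(\mathcal{F}(X))$ (via the even-moves follower supplied by the hypothesis) and $1\notin g(\mathcal{F}(X))$ (since each follower $F$ either is even-moves or has the odd-moves follower $F\cup F'$, so $1\in g(\mathcal{F}(F))$ and $g(F)\neq 1$). Your route is more elementary, needing only the mex recursion and the parity shift under taking followers, while the paper's buys brevity by reusing the first part at the cost of invoking the Sprague--Grundy sum formula. One shared caveat: both arguments tacitly assume $X$ is non-terminal in the second claim (a terminal $X$ satisfies the hypothesis vacuously but has $g(X)=0$), so neither proof is worse off on that score.
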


\begin{proof}
Suppose every follower of $X$ is an associate of an odd-moves state of $G$. Then each such follower $X^*$ has a follower $X^{**}$ which is an even-moves state, so $g(X^{**})=0$ and $g(X^*)\neq 0$. Therefore, $g(X)=0$.

Next, suppose every follower of $X$ is an associate of an even-moves follower of $X$.  Then every follower of $X\sqcup R_1$ is an associate of an odd-moves state of $G+P_2$.  By the first part of the proof, $g(X\sqcup R_1)=0$.  Because $g(X\sqcup R_1)=g(X)\oplus g(R_1)=g(X)\oplus 1$, we have $g(X)=1$.
\end{proof}

\begin{proposition}
For $b,c\ge 2$, 
\begin{align*}
    g(S[\t 0,\v b,\v c])&=\ol b\oplus \ol c\oplus 1 & 
    g(S[\t 1,\v b,\v c])&=\ol b\oplus \ol c \\
    g(S[\t0,\v b,\h c])&=\ol b\oplus \ol c & 
    g(S[\t1,\v b,\h c])&=\ol b\oplus \ol c\oplus 1
\end{align*}
\end{proposition}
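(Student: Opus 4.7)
I plan to prove all four Grundy value formulas by simultaneous induction on $b + c$, with the base case $b = c = 2$ handled directly by enumerating followers. Before the induction, I would observe that the two choices allowed by $\t$ on the $u$-leg yield the same Grundy value via a state isomorphism (established through Theorem \ref{local_crit}) that flips the $u$-arrow in tandem with the hub's in/out pattern at the vertex. Within each inductive step I would prove the $\t 0$ formulas first, then the $\t 1$ formulas, since the latter have a $u$-leg follower isomorphic (via Theorem \ref{sufficient}) to the corresponding $\t 0$ state at the same $b + c$.

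For the representative state $X = S[\v 0, \v b, \v c]$, I would enumerate followers by the marked edge and direction and compute each follower's Grundy value by identifying it with either a smaller spider state (inductive hypothesis) or a disjoint union of a smaller spider state with a path-graph state $F_n$ or $C_n$ (Propositions \ref{basic_isos} and \ref{11n}). The $v$-leg end-inward move $(v_b, v_{b-1})$ gives an isomorphism to $S[\v 0, \v(b-1), \v c]$ by Theorem \ref{sufficient}. Middle moves at interior edges $\{v_{i-1}, v_i\}$ for $2 \le i \le b-1$ disconnect the unmarked-edge subgraph, and by encoding the boundary constraints at $v_{i-1}$ and $v_i$ as virtual leaves and applying Theorem \ref{local_crit}, the follower decomposes as $S[\v 0, \v(i-1), \v c] \sqcup F_{b-i}$ in the middle-inward case and $S[\v 0, \h(i-1), \v c] \sqcup C_{b-i}$ in the middle-outward case. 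The hub-adjacent moves $(v_1, v_0)$ and $(v_0, v_1)$ likewise decompose into $S[\v 0, \v 0, \v c] \sqcup F_{b-1}$ and $S[\v 0, \h 0, \v c] \sqcup C_{b-1}$ respectively, and $w$-leg followers are symmetric.

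Collecting all follower Grundy values via nim-sums, all the "inner" moves contribute $\ol b \oplus \ol c$, while the two hub-outward moves contribute $c \oplus \ol b$ (from the $v$-leg) and $b \oplus \ol c$ (from the $w$-leg). Since $b, c \ge 2$, these last two values are at least $2$ and therefore cannot equal $\ol b \oplus \ol c \oplus 1 \in \{0, 1\}$; hence $\mathrm{mex} = \ol b \oplus \ol c \oplus 1$, matching the claim for $S[\t 0, \v b, \v c]$. The other three formulas follow by analogous case analyses: the $\h c$ variants swap the roles of $F_n$ and $C_n$ in the $w$-leg decompositions (shifting some Grundy values by $1$), and the $\t 1$ variants incorporate the additional $u$-leg follower already reducible to a $\t 0$ state.

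The main obstacle will be verifying the state isomorphisms that decompose each middle-edge and hub-edge follower into disjoint unions via Theorem \ref{local_crit}. At each boundary vertex (where a component of the unmarked-edge subgraph meets a marked-edge region of the original graph), the external-arrow constraint must be carefully encoded as a virtual leaf with the correctly oriented arrow, and the flower criterion of Definition \ref{local_def} must be checked at each boundary vertex to confirm that the local state-preventing conditions match on both sides. Once these decompositions are established, the Grundy computations reduce to routine nim-sum arithmetic combined with the inductive hypothesis and the basic formulas from Propositions \ref{basic_isos} and \ref{11n}.
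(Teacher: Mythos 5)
Your plan---enumerate followers, decompose each as a disjoint union of a smaller hub state with a flow, crash, or twig, and take the mex by induction on $b+c$---is a genuinely different route from the paper's, and the arithmetic does come out right, but it rests on one claim that is false. You assert that the two states covered by the tilde on the $u$-leg, e.g.\ $S[\v 0,\v b,\v c]$ and $S[\h 0,\v b,\v c]$, are related by a state isomorphism obtained by ``flipping the $u$-arrow in tandem with the hub's in/out pattern.'' A state isomorphism induces an isomorphism of game trees and therefore preserves the multiset of follower Grundy values; but already for $b=c=2$ these multisets differ. For $S[\v 0,\v 2,\v 2]$ the three $v$-leg followers are $S[\v0,\v1,\v2]\cong C_3$, $S[\v0,\v0,\v2]\sqcup F_1\cong C_2\sqcup F_1$, and $S[\v0,\h0,\v2]\sqcup C_1\cong T_2\sqcup C_1$, with values $0,0,2$; for $S[\h 0,\v 2,\v 2]$ the corresponding followers are $S[\h0,\v1,\v2]\cong \cev{T}_1\sqcup \cev{T}_2$, $S[\h0,\v0,\v2]\sqcup F_1\cong \cev{T}_2\sqcup F_1$, and $S[\h0,\h0,\v2]\sqcup C_1\cong \cev{F}_2\sqcup C_1$, with values $3,3,0$. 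So the full multisets are $\{0,0,0,0,2,2\}$ versus $\{0,0,3,3,3,3\}$: equal mex, but no isomorphism. You must instead run the follower analysis separately for each orientation of the $u$-arrow (the hub pieces $S[\vh 0,\v 0,\v c]$, $S[\vh 0,\h 0,\v c]$, etc.\ have different Grundy values in the two cases, as above, but the resulting mex is the same). A smaller inaccuracy: not every ``inner'' move contributes $\ol b\oplus\ol c$; for $b\ge 3$ the outward move on the second edge of the $v$-leg gives $S[\v 0,\v b(\u 2),\v c]\cong S[\v0,\h1,\v c]\sqcup C_{b-2}$ with value $(c\oplus 1)\oplus(\ol b\oplus 1)=c\oplus\ol b$. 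This lands among the values $\ge 2$ that you already discard, so the conclusion survives, but since the argument is an exact mex computation the bookkeeping has to be done honestly.

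For contrast, the paper's proof avoids the enumeration entirely: it notes that every follower of $S[\t 0,\v b,\v c]$ is an \emph{associate} (shares a common follower) of one of the two states obtained by marking a hub-adjacent edge of the $v$- or $w$-leg, each of which is isomorphic to a disjoint union of a flow and a crash and is therefore an even- or odd-moves state according to the parity of $b+c$; Proposition \ref{assoc_prop} then delivers the Grundy value $0$ or $1$ with no induction and no mex calculation, and handles both tilde variants uniformly. Your approach can be repaired along the lines above, but it requires certifying many decompositions via Theorem \ref{local_crit} or Theorem \ref{sufficient} and buys nothing the parity argument does not already give.
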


\begin{figure}[H]
\centering
\begin{subfigure}{.45\textwidth}  
\centering
\begin{tikzpicture}[scale=.65]
\draw
{
(0,0)node{}
-- (-1,0)node{}
(-2,0)node{}
-- (-3,0)node[label={[label distance=0mm]below:$v_b$}]{}
-- (-4,0)node{}
(0,0)
-- (0,1)node{}
(0,0)node{}
-- (1,0)node{}
(2,0)node{}
-- (3,0)node[label={[label distance=0mm]below:$w_c$}]{}
-- (4,0)node{}
};
\draw[style=dotted, thick]
{
(1,0) -- (2,0)
(-1,0) -- (-2,0)
};
\begin{scope}[decoration={markings, 
    mark=at position 0.58 with {\arrow{Straight Barb[scale=1.5]}}}
    ] 
\draw[postaction={decorate}]{(4,0) -- (3,0)};
\draw[postaction={decorate}]{(-4,0) -- (-3,0)};
\draw[postaction={decorate}]{(0,1) -- (0,0)};
\end{scope}
\end{tikzpicture}
\end{subfigure}\hspace{1cm}%
\begin{subfigure}{.45\textwidth}  
\centering
\begin{tikzpicture}[scale=.65]
\draw
{
(0,0)node{}
-- (-1,0)node{}
(-2,0)node{}
-- (-3,0)node[label={[label distance=0mm]below:$v_b$}]{}
-- (-4,0)node{}
(0,0)
-- (0,1)node[label={[label distance=0mm]left:$u_1$}]{}
-- (0,2)node{}
(0,0)node{}
-- (1,0)node{}
(2,0)node{}
-- (3,0)node[label={[label distance=0mm]below:$w_c$}]{}
-- (4,0)node{}
};
\draw[style=dotted, thick]
{
(1,0) -- (2,0)
(-1,0) -- (-2,0)
};
\begin{scope}[decoration={markings, 
    mark=at position 0.58 with {\arrow{Straight Barb[scale=1.5]}}}
    ] 
\draw[postaction={decorate}]{(4,0) -- (3,0)};
\draw[postaction={decorate}]{(-4,0) -- (-3,0)};
\draw[postaction={decorate}]{(0,2) -- (0,1)};
\end{scope}
\end{tikzpicture}
\end{subfigure}
    \caption{The states $S[\d0,\d b,\d c]$ and $S[\d1,\d b,\d c]$.}
    \label{fig:01bc_spiders}
\end{figure}

\begin{proof}
Suppose $\ol b\oplus \ol c=1$.  Then any follower of $S[\t 0,\v b,\v c]$ is an associate of either $S[\vh 0,\v b(\vh 1),\v c]$ or $S[\vh 0,\v b,\v c(\vh 1)]$.  The former is isomorphic to one of the odd-moves states $F_{b-1}\sqcup C_c$ or $C_{b-1}\sqcup F_c$, and the latter is isomorphic to one of the odd-moves states $C_b\sqcup F_{c-1}$ or $F_b\sqcup C_{c-1}$.  By Proposition \ref{assoc_prop}, $g(S[\t 0,\v b,\v c])=0=\ol b\oplus \ol c\oplus 1$. 

Suppose $\ol b\oplus \ol c=0$.  Then any follower of $S[\t 0,\v b,\v c]$ is an associate of either $S[\vh 0,\v b(\vh 1),\v c]$ or $S[\vh 0,\v b,\v c(\vh 1)]$.  The former is isomorphic to one of the even-moves states $F_{b-1}\sqcup C_c$ or $C_{b-1}\sqcup F_c$, and the latter is isomorphic to one of the even-moves states $C_b\sqcup F_{c-1}$ or $F_b\sqcup C_{c-1}$.  By Proposition \ref{assoc_prop}, $g(S[\t 0,\v b,\v c])=1=\ol b\oplus \ol c\oplus 1$.

The proofs for $S[\t 0,\d b,\u c]$, $S[\t 1,\v b,\v c]$, and $S[\t 1,\v b,\h c]$ are analogous.  
\end{proof}

\section{3-Legged Spiders with Three Marked Legs}

Although our eventual goal is to find the Grundy value of an unmarked $3$-legged spider graph, we begin by finding the Grundy value of a spider with a mark at the end of each leg.  

\begin{definition}
The \emph{order} of a positive integer $k$, denoted $\mathrm{ord}(k)$, is $\lfloor\log_2(k)\rfloor+1$, the greatest positive integer $r$ such that $2^{r-1}\le k$. If $n$ has binary expansion $k_r k_{r-1}\cdots k_2 k_1$ with $k_r=1$, then $\mathrm{ord}(k)=r$.
\end{definition}

\begin{theorem}
If $a, b, c\ge 2$, then $g(S[ \t a,\t b,\t c])=((a-2)\oplus(b-2)\oplus(c-2))+2$.
\end{theorem}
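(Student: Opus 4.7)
I would proceed by strong induction on $a+b+c$. Write $N=(a-2)\oplus(b-2)\oplus(c-2)$; the task is to show that $\mathrm{mex}\,g(\mathcal{F}(S[\tilde a,\tilde b,\tilde c]))=N+2$, i.e.\ that the follower Grundy values cover $\{0,1,\ldots,N+1\}$ while omitting $N+2$. The base case $a=b=c=2$ gives $N=0$ and requires $g=2$, which I would verify directly by enumerating followers of $S[\tilde 2,\tilde 2,\tilde 2]$ and reading off their Grundy values from Propositions~\ref{basic_isos} and~\ref{11n}.

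For the inductive step, I would partition the followers of $S[\tilde a,\tilde b,\tilde c]$ by where the new arrow lands. \emph{Type I (middle-of-leg marks,} at position $i$ in leg~3 with $2\le i\le c-1$, and analogously in the other legs): the mark splits the unmarked subgraph into a disjoint union of a smaller spider $S[\tilde a,\tilde b,\widetilde{i-1}]$ and either a flow $F_{c-i}$ or a crash $C_{c-i}$, depending on whether the new arrow agrees with or opposes the old leaf arrow. The Grundy value of the follower is then the nim-sum of $g(F_{c-i})=\ol{c-i}$ (or $g(C_{c-i})=\ol{c-i}\oplus 1$) with the Grundy value of the smaller spider, which is supplied by the inductive hypothesis when $i-1\ge 2$ and by the short-leg propositions when $i-1\in\{0,1\}$. \emph{Type II (leaf-adjacent marks,} at position $c$): the no-sink/no-source condition at the vertex between the two marks forces the direction of the new arrow to match that of the old leaf arrow, and the follower is isomorphic to $S[\tilde a,\tilde b,\widetilde{c-1}]$, again handled by induction or by the short-leg propositions. \emph{Type III (hub-adjacent marks,} at position~1): such a mark does not disconnect the unmarked subgraph, so I would invoke Theorem~\ref{local_crit} to establish a state isomorphism with a simpler state, and use the $p(n)$-moves framework to argue that several such followers are $p(0)$- or $p(1)$-moves states, giving Grundy values $0$ and $1$ directly.

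With all follower Grundy values in hand, the proof concludes by checking the two halves of the mex condition: every value in $\{0,1,\ldots,N+1\}$ is realized (Type III supplying $0$ and $1$, with Types I and II supplying the rest by suitable choice of position and direction), and $N+2$ itself is not realized by any follower. Both halves reduce to bitwise case analyses on the low bits of $N$.

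\textbf{Main obstacle.} The Type III analysis concentrates the difficulty. Because the hub retains all three incident edges in $U$ after a hub-adjacent mark, there is no immediate disjoint-union decomposition, and the hub is now asymmetrically constrained: it may only evolve toward a source (if the new arrow points outward) or toward a sink (if it points inward). Building a matching simpler state with the same asymmetric constraint forces a genuine use of the local tail/head-flower criterion of Theorem~\ref{local_crit} rather than the cleaner Corollary~\ref{local_iso} or Theorem~\ref{sufficient}. A secondary complication is arithmetic: the ``$+2$'' in the formula does not distribute over $\oplus$, so the mex verification splits into short case analyses tracking how the Type I and Type II Grundy values $((a-2)\oplus(b-2)\oplus(i-3))+2$ (possibly XOR'd with $\ol{c-i}$ or $\ol{c-i}\oplus 1$) interleave to fill $\{0,1,\ldots,N+1\}$ without ever hitting $N+2$.
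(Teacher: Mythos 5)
Your overall strategy---strong induction, classifying each follower by the position of the new mark, decomposing it as a smaller spider with all three legs marked disjoint-union a flow or crash, feeding the boundary cases to the short-leg propositions, and finishing with a bitwise mex argument---is exactly the paper's proof. Your Type~I formula $((a-2)\oplus(b-2)\oplus(i-3))+2$ nim-summed with $\overline{c-i}$ or $\overline{c-i}\oplus 1$ matches the paper line for line, and the paper absorbs your Type~II into Type~I as the case $m=a$ (the forced direction there corresponds precisely to the fact that $C_0$ is not a state).

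The genuine problem is your Type~III, which you single out as the main obstacle. A hub-adjacent mark \emph{does} disconnect the unmarked subgraph: once $\{w_0,w_1\}$ is marked it is removed from $U$, so the path $w_1\cdots w_c$ splits off as its own component and the follower decomposes as $F_{c-1}\sqcup S[\t a,\t b,\t 0]$ or $C_{c-1}\sqcup S[\t a,\t b,\t 0]$, the same disjoint-union pattern as every other position; no bespoke connected-graph isomorphism via Theorem~\ref{local_crit} is needed. Worse, your claim that such followers are $p(0)$- or $p(1)$-moves states is false: for instance $S[\v0,\v2,\v2]$ has terminal descendents reached in both two moves (add $(v_0,v_1)$ and $(w_0,w_1)$, after which both remaining edges are frozen) and three moves (add $(v_1,v_0)$, $(v_2,v_1)$, $(w_0,w_1)$), so $F_1\sqcup S[\v0,\v2,\v2]$ is not a $p(n)$-moves state for any $n$. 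The values $0$ and $1$ must instead be read off from the already-proved proposition giving $g(S[\t 0,\v b,\v c])=\overline{b}\oplus\overline{c}\oplus 1$ and its variants (whose proof uses the associates machinery, not an even/odd-moves property of the state itself). Finally, what you defer as a ``secondary complication'' is where the real work lives: attaining each $k\in\{2,\ldots,N+1\}$ uses the order-of-$(N\oplus(k-2))$ Nim argument to pick the right leg and position, and ruling out $N+2$ needs the parity split between flow- and crash-type followers together with the inequality $(a-2)\oplus(m-3)\ge (a-2)-(m-3)\ge 2$ for $m<a$, which is the one non-routine step in the entire proof.
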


\begin{proof}
Suppose $a, b, c\ge 2$, and let $x=((a-2)\oplus(b-2)\oplus(c-2))$.  We show that $g(\mathcal{F}(S[\t a,\t b,\t c]))$ contains all nonnegative integers less than $x+2$ but does not contain $x+2$.

Grundy values $0$ and $1$ can be obtained from the following states, the first and last of which have value $\ol a\oplus\ol b\oplus \ol c$, and the second and third of which have value $\ol a\oplus\ol b\oplus \ol c\oplus 1$:
\begin{align*}
    S[\du a(\du 1),\v b,\v c]& \cong F_{a-1}\sqcup S[\du 0,\v b,\v c]\\
    S[\du a(\du 1),\v b,\h c]& \cong F_{a-1}\sqcup S[\du0,\v b,\h c]\\
    S[\du a(\ud1),\v b,\v c]& \cong C_{a-1}\sqcup S[\ud 0,\v b,\v c]\\
    S[\du a(\ud1),\v b,\h c]& \cong C_{a-1}\sqcup S[\ud0,\v b,\h c].
\end{align*}

Suppose next that $k\in\{2,3,\ldots,x+1\}$.  Then $k-2<x$.  Let $r=\mathrm{ord}(x\oplus (k-2))$. Then $x_r=1$, so at least one of $(a-2)_r$, $(b-2)_r$, and $(c-2)_r$ equals $1$.  
Suppose, without loss of generality, that $(a-2)_r=1$.   Then $$(k-2)\oplus (b-2)\oplus (c-2)=x\oplus (k-2)\oplus (a-2)<a-2,$$ so $S[\du a(\widecheck{\widehat {((k-2)\oplus (b-2)\oplus (c-2))+3}}),\t b,\t c]$ is a follower of $S[\du a,\t b,\t c]$. Because this state is isomorphic to $$S[\widetilde{((k-2)\oplus (b-2)\oplus (c-2))+2},\t b,\t c]\sqcup F_{a-3-((k-2)\oplus (b-2)\oplus (c-2))},$$ we see by induction that its Grundy value is $k\oplus \ol k\oplus \ol a\oplus \ol b\oplus \ol c\oplus 1$.  

If $k=x+1$, then this expression for the Grundy value reduces to $k$.  If $k\in \{2, 3, \ldots, x\}$ and the Grundy value above is not $k$, then the follower \\
${S[\du a(\widehat{\widecheck{((k-2)\oplus (b-2)\oplus (c-2))+3}}) ,\t b,\t c]}$ is isomorphic to $$S[\widetilde{((k-2)\oplus (b-2)\oplus (c-2))+2},\t b,\t c]\sqcup C_{a-3-((k-2)\oplus (b-2)\oplus (c-2))}$$ and has Grundy value $k\oplus \ol k\oplus \ol a\oplus \ol b\oplus \ol c=k$.  

Finally, suppose for contradiction that there exists a follower of $S[\t a,\t b,\t c]$ with Grundy value $x+2$.  Assume, without loss of generality, that this follower is $S[\du a(\du m),\t b,\t c]$, with $m\in \{1, 2, \ldots a\}$, or $S[\du a(\ud m),\t b,\t c]$, with $m\in \{1, 2, \ldots a-1\}$.  Clearly $m\notin\{1,2\}$, because then the Grundy value would be $0$ or $1$.

The state $S[\du a(\du m),\t b,\t c]$ is isomorphic to $S[\widetilde{m-1},\t b,\t c]\sqcup F_{a-m}$, with Grundy value $(((m-3)\oplus (b-2)\oplus (c-2))+2)\oplus \ol a\oplus \ol m$, which does not have the same parity as $x+2$.  The state $S[\du a(\ud  m),\t b,\t c]$ is isomorphic to $S[\widetilde{m-1},\t b,\t c]\sqcup C_{a-m}$, with Grundy value $(((m-3)\oplus (b-2)\oplus (c-2))+2)\oplus \ol a\oplus \ol m\oplus 1$, which does have the same parity as $x+2$.  For this value to equal to $x+2$, we would need to have $$x=(m-3)\oplus(b-2)\oplus(c-2)\oplus \ol a\oplus \ol m\oplus \ol 1,$$ 
i.e.,
$$(a-2)\oplus(m-3)=\ol a\oplus \ol m\oplus \ol 1 \in \{0,1\}.$$
Because $m<a$, we know $(a-2)-(m-3)\ge 2$, so $(a-2)\oplus (m-3)\notin \{0,1\}$, contradicting the assertion above.  
\end{proof}

\section{3-Legged Spiders with a Short Marked Leg}

Next, we compute Grundy values of $3$-legged spiders with a mark close to the hub on one leg, and with one or both of the other legs unmarked.  

\begin{proposition}
For $a,b\in\{0,1\}$ and $c\ge 2$, 
\begin{align*}
    g(S[\d a,\d b,c]) & = a\oplus b\oplus c \\
    g(S[\d a,\u b,c]) & = a\oplus b\oplus \ol c
\end{align*}
\end{proposition}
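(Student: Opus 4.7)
The plan is to treat the eight sub-formulas case-by-case, reducing by the graph automorphism $S[\d a,\d b,c]\cong S[\d b,\d a,c]$ wherever possible and inducting on $c\ge 2$. The base case $c=2$ is verified by direct game-tree enumeration in each configuration.

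The two $(a,b)=(0,0)$ cases follow immediately from state isomorphisms. For $S[\d 0,\d 0,c]$, the reversal $w_i\mapsto c-i$ of the $c$-leg onto $U(\cev T_c)$ sends $h(X)\cup L(G)=\{w_0,w_c\}$ to $h(\cev T_c)\cup L(P_{c+2})=\{c,0\}$ and the tail-plus-leaf set $\{w_c\}$ to $\{0\}$, so Theorem \ref{sufficient} applies and $g(S[\d 0,\d 0,c])=g(\cev T_c)=g(T_c)=c$ by flip symmetry. For $S[\d 0,\u 0,c]$, the balanced hub (one arrow in from $u_1$, one out to $v_1$) makes the identity map $w_i\mapsto i$ onto $U(R_c)$ a state isomorphism by the same theorem, giving $g=g(R_c)=\ol c$.

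For the remaining six configurations I would induct on $c$, classifying followers into three types. (i) When $a=1$, marking the unique valid $u$-leg move $(u_1,u_0)$ yields a state whose $U$-graph coincides with that of the corresponding $a=0$ configuration, and Theorem \ref{sufficient} supplies a state isomorphism to it, so the follower's Grundy value is given by a previously handled case or by the inductive hypothesis. (ii) The analogous reduction applies for $b=1$ via the move $(v_1,v_0)$. (iii) Any mark on the $c$-leg splits the state, up to isomorphism, into a nim-sum of a smaller spider (with shorter $c$-leg) and one of $F_k$, $C_k$, $T_k$, or $\cev T_k$, determined by the position and direction of the new mark together with the hub-side constraints. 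Applying the closed-form Grundy values of $F$, $C$, $T$, and $R$ from the earlier sections, each follower's Grundy value can then be computed and the mex checked against $a\oplus b\oplus c$ (respectively $a\oplus b\oplus\ol c$).

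The principal obstacle will be the subcases $S[\d 1,\d 1,c]$ and $S[\d 1,\u 1,c]$, where both short legs constrain the hub simultaneously. Here the substate resulting from a $c$-leg mark does not always split cleanly, since whether the forced $u$- and $v$-leg moves have already been taken affects the sink and source constraints that propagate from the new mark back toward the hub. As in the proof of Proposition \ref{11n}, I anticipate classifying terminal descendents by which of the key short-leg moves they include, then using nim-sum arithmetic to verify that the mex set covers exactly $\{0,1,\ldots,a\oplus b\oplus c-1\}$ (or its $\d\u$ analog) while excluding the target value; ensuring that no follower Grundy value equals the target is typically the more delicate half of the mex verification.
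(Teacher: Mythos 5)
Your treatment of the $(a,b)=(0,0)$ cases agrees with the paper's, and you correctly single out $S[\d 1,\d 1,c]$ as a configuration that cannot be dispatched by a single isomorphism. However, the engine you propose for the remaining six configurations --- induction on $c$ with a mex computation over followers --- has a concrete defect: the induction never actually engages. A mark at position $m$ on the $c$-leg splits the state as a twig $T_{c-m}$ (or $\cev{T}_{c-m}$) disjoint from a spider of the form $S[\d a,\d b,\widecheck{m-1}]$ or $S[\d a,\d b,\widehat{m-1}]$ (similarly for $\u b$), i.e.\ a spider whose third leg is now \emph{marked} at its end. That is not an instance of the present proposition at a smaller value of $c$; its Grundy value is supplied by Proposition \ref{basic_isos} (when $a$ or $b$ is $0$) and Proposition \ref{11n} (when $a=b=1$), which is exactly how the paper evaluates $g(\mathcal{F}(S[\d 1,\d 1,c]))=\{0,1,\ldots,c-1,c+1\}$. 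So your inductive hypothesis is either applied to the wrong kind of state or never used. Relatedly, the formula is only correct for \emph{even} $c$ (the paper's proof opens with ``Assume $c\ge 2$ is even''): for every $c$ one has $S[\d 0,\d 1,c]\cong \cev{T}_{c+1}$, so $g(S[\d 0,\d 1,3])=4\ne 0\oplus 1\oplus 3$. An induction that passes through odd $c$ therefore fails at $c=3$ no matter how the followers are organized.

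Beyond that, your route is far heavier than the paper's. The paper disposes of five of the six remaining configurations by direct state isomorphisms --- $S[\d 0,\d 1,c]\cong S[\d 1,\d 0,c]\cong T_{c+1}$, $S[\d 0,\u 1,c]\cong S[\d 1,\u 0,c]\cong T_1\sqcup R_c$, and $S[\d 1,\u 1,c]\cong F_2\sqcup R_c$ --- reserving a follower/mex computation for $S[\d 1,\d 1,c]$ alone. In particular, the ``obstacle'' you anticipate at $S[\d 1,\u 1,c]$ dissolves once you observe that its hub can never become a sink or a source, so the state factors cleanly. Finally, the terminal-descendent parity classification of Proposition \ref{11n} applies to the factors $S[\v 1,\v 1,\t n]$ occurring inside the followers of $S[\d 1,\d 1,c]$ (these are $p(n+1)$- and $p(n)$-moves states), not to $S[\d 1,\d 1,c]$ itself, whose Grundy value is $c$ rather than $0$ or $1$; what is needed there is an ordinary mex over follower values, with Proposition \ref{11n} feeding in the values of those factors.
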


\begin{figure}[H]
\centering
\begin{subfigure}{.45\textwidth}  
\centering
\begin{tikzpicture}[scale=.75]
\draw
{
(0,0)node{}
-- (-1,0)node[label={[label distance=0mm]left:$v_1$}]{}
(0,0)
-- (0,1)node[label={[label distance=0mm]above:$u_1$}]{}
(0,0)node{}
-- (1,0)node{}
(2,0)node{}
-- (3,0)node[label={[label distance=0mm]right:$w_c$}]{}
};
\draw[style=dotted, thick]
{
(1,0) -- (2,0)
};
\begin{scope}[decoration={markings, 
    mark=at position 0.58 with {\arrow{Straight Barb[scale=1.5]}}}
    ] 
\draw[postaction={decorate}]{(-1,0) -- (0,0)};
\draw[postaction={decorate}]{(0,1) -- (0,0)};
\end{scope}
\end{tikzpicture}
\end{subfigure}\hspace{1cm}%
\begin{subfigure}{.45\textwidth}  
\centering
\begin{tikzpicture}[scale=.75]
\draw
{
(0,0)node{}
-- (-1,0)node[label={[label distance=0mm]left:$v_1$}]{}
(0,0)
-- (0,1)node[label={[label distance=0mm]above:$u_1$}]{}
(0,0)node{}
-- (1,0)node{}
(2,0)node{}
-- (3,0)node[label={[label distance=0mm]right:$w_c$}]{}
};
\draw[style=dotted, thick]
{
(1,0) -- (2,0)
};
\begin{scope}[decoration={markings, 
    mark=at position 0.58 with {\arrow{Straight Barb[scale=1.5]}}}
    ] 
\draw[postaction={decorate}]{(0,0) -- (-1,0)};
\draw[postaction={decorate}]{(0,1) -- (0,0)};
\end{scope}
\end{tikzpicture}
\end{subfigure}
    \caption{The states $S[\d0,\d0,c]\cong T_c$ and $S[\d0,\u0,c]\cong R_c$.}
    \label{fig:two_short_marked_legs}
\end{figure}

\begin{proof}
Assume $c\ge 2$ is even.  Then we have the following state isomorphisms and Grundy values:
\begin{align*}
    S[\d0,\d0,c] & \cong T_c & g(S[\d0,\d0,c])& = c \\
    S[\d0,\d1,c] \cong S[\d1,\d0,c] & \cong T_{c+1} & g(S[\d0,\d1,c]) = g(S[\d1,\d0,c]) &= c+1 \\
    &&g(S[\d1,\d1,c])& = c \\
    S[\d0,\u0,c] & \cong R_c & g(S[\d0,\u0,c])& = \ol c \\
    S[\d0,\u1,c]  \cong S[\d1,\u0,c] & \cong T_1\sqcup R_c & g(S[\d0,\u1,c]) = g(S[\d1,\u0,c]) & = \ol c\op 1 \\
    S[\d1,\u1,c] & \cong F_2\sqcup R_c & g(S[\d1,\u1,c])& = \ol c 
\end{align*}
In the case of $S[\d1,\d1,c]$, Proposition \ref{11n} can be used to find that $g(\mathcal{F}(S[\d1,\d1,c])\allowbreak =\{0, 1, \ldots, c-1, c+1\}$, justifying the claim above that $g(S[\d1,\d1,c])=c$.  
\end{proof}

\begin{proposition}\label{gun}
For $a\in \{0,1\}$, $b,c\ge 2$, and $c$ even,
\begin{align*}
    g(S[\d a,\d b,c]) & = a\oplus \ol b\oplus (c-2)\oplus 1 \\
    g(S[\d a,\u b,c]) & = a\oplus \ol b\oplus (c-2)
\end{align*}
\end{proposition}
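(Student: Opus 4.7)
My plan is a direct mex computation analogous in structure to the three-marked-leg-spider theorem of Section~8 and to Proposition~\ref{11n}, proceeding by induction on~$b$. I would treat the two identities of Proposition~\ref{gun} in parallel, grouping followers of $S[\d a,\d b,c]$ (or $S[\d a,\u b,c]$) by which leg receives the new arrow.

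Moves on leg $u$, available only when $a=1$, complete that leg with a flow or crash pattern, and I would identify the resulting state via a Proposition~\ref{basic_isos}-style isomorphism with $F_2$ or $C_2$ disjoint-unioned with a suitable two-marked-leg spider state from Section~7 or from the preceding proposition. Moves at an interior edge $\{v_{i-1},v_i\}$ of leg~$v$, for $i\in\{1,\ldots,b-1\}$, produce followers isomorphic to $S[\d a,\widetilde{i-1},c]\sqcup T_{b-i}$ or $S[\d a,\widetilde{i-1},c]\sqcup\cev T_{b-i}$, with the tilde determined by the arrow direction; the inner Grundy value comes from the preceding proposition (for small $i$) or inductively from Proposition~\ref{gun} itself (for $i\ge 3$), and the outer Grundy value is $b-i$. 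The lone legal move at position~$b$ of leg~$v$ is $(v_b,v_{b-1})$, which reduces after a reversal of orientation to an already-handled case.

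Moves on leg~$w$ at edge $\{w_{k-1},w_k\}$ split leg~$w$ at position~$k$: the outer path $w_k,\ldots,w_c$ becomes either $T_{c-k}$ or $\cev T_{c-k}$ (depending on arrow direction), while the inner portion combined with legs $u$ and $v$ forms a three-marked-leg spider $S[\d a,\d b,\widetilde{k-1}]$ (resp.\ $S[\d a,\u b,\widetilde{k-1}]$) when $k\ge 2$, whose Grundy value comes from the Section~8 theorem; the $k=1$ moves I would handle separately via Proposition~\ref{basic_isos} or~\ref{11n}. With all follower Grundy values collected, I would verify that they sweep out $\{0,1,\ldots,(\mathrm{target})-1\}$ and that the target value $a\oplus\ol b\oplus(c-2)\oplus 1$ (respectively $a\oplus\ol b\oplus(c-2)$) is itself absent, with the absence step following a parity-of-Grundy-value argument in the style of the final paragraph of the Section~8 theorem.

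The main obstacle will be the followers near the hub --- moves on $\{u_0,u_1\}$ when $a=1$, on $\{v_0,v_1\}$, and on $\{w_0,w_1\}$ --- where the hub ends up incident to several marked arrows and the state does not factor as a clean disjoint union. For these I would build explicit state isomorphisms using the Local Criterion (Theorem~\ref{local_crit}), reducing them to states covered by Propositions~\ref{basic_isos} and~\ref{11n} or by the other results of Section~7. A secondary complication is that the $\u b$ identity reverses the leaf arrow on leg~$v$, shifting some but not all follower Grundy values by~$1$ and calling for a careful re-verification of parities; handling the two identities in parallel should keep the bookkeeping manageable.
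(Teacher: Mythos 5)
Your overall skeleton --- induction on $b$, grouping followers by which leg receives the new arrow, decomposing each follower as a disjoint union of known states, and then a mex computation --- is exactly the paper's approach. But there is a concrete error in your decomposition of the leg-$v$ followers that would derail the computation. When you place an arrow on an interior edge $\{v_{i-1},v_i\}$ of leg $v$, the outer segment $v_i,\ldots,v_{b+1}$ already carries the pre-existing leaf arrow ($\d b$ or $\u b$) at its far end, so after the move it has marks at \emph{both} ends with $b-i$ unmarked edges between them: it is a flow $F_{b-i}$ or a crash $C_{b-i}$, not a twig $T_{b-i}$ or $\cev{T}_{b-i}$. (Twigs are correct for leg $w$, which starts unmarked, and the paper indeed uses $S[\d0,\d b,c(\d l)]\cong S[\d0,\d b,\widecheck{l-1}]\sqcup T_{c-l}$ there; but on leg $v$ the paper has $S[\d0,\d b(\d k),c]\cong F_{b-k}\sqcup S[\d0,\widecheck{k-1},c]$ and $S[\d0,\d b(\u k),c]\cong C_{b-k}\sqcup S[\d0,\widehat{k-1},c]$.) The distinction is not cosmetic: $g(T_n)=n$, whereas $g(F_n)=\ol n$ and $g(C_n)=\ol n\oplus 1$. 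With the correct flow/crash pieces, the leg-$v$ followers contribute only a few parity-controlled values (in the $a=0$, $\d b$ case: $\ol b$, $\ol b\oplus(c-2)$, and $\ol b\oplus(c+1)$), and it is the leg-$w$ followers, via the twigs $T_{c-l}$, that sweep out $\{0,1,\ldots,c-3\}$. With your twig identification the leg-$v$ followers would appear to sweep a large range and the resulting mex would be wrong.

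Two smaller points. First, when $a=1$ the unique legal move on leg $u$ is $(u_1,u_0)$ (the other orientation creates a sink at $u_1$), and the resulting state is isomorphic to $S[\d0,\t b,c]$ outright: the fully marked leg leaves no unmarked component, so there is no $F_2$ or $C_2$ factor to split off. Second, your worry that hub-adjacent moves fail to factor as disjoint unions is unfounded: marks at positions $1$ or $2$ of a leg do split cleanly, with the hub-side piece absorbed into a state such as $T_c$, $T_{c+1}$, $R_c$, or $T_1\sqcup R_c$ already computed in the first proposition of Section~9, which is precisely why that proposition precedes this one. Once the flow/crash correction is made, your plan coincides with the paper's argument.
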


\begin{figure}[H]
\centering
\begin{subfigure}{.45\textwidth}  
\centering
\begin{tikzpicture}[scale=.7]
\draw
{
(0,0)node{}
-- (-1,0)node{}
(-2,0)node{}
-- (-3,0)node[label={[label distance=0mm]below:$v_b$}]{}
-- (-4,0)node{}
(0,0)
-- (0,1)node{}
(0,0)node{}
-- (1,0)node{}
(2,0)node{}
-- (3,0)node[label={[label distance=0mm]below:$w_c$}]{}
};
\draw[style=dotted, thick]
{
(1,0) -- (2,0)
(-1,0) -- (-2,0)
};
\begin{scope}[decoration={markings, 
    mark=at position 0.58 with {\arrow{Straight Barb[scale=1.5]}}}
    ] 
\draw[postaction={decorate}]{(-4,0) -- (-3,0)};
\draw[postaction={decorate}]{(0,1) -- (0,0)};
\end{scope}
\end{tikzpicture}
\end{subfigure}\hspace{1cm}%
\begin{subfigure}{.45\textwidth}  
\centering
\begin{tikzpicture}[scale=.7]
\draw
{
(0,0)node{}
-- (-1,0)node{}
(-2,0)node{}
-- (-3,0)node[label={[label distance=0mm]below:$v_b$}]{}
-- (-4,0)node{}
(0,0)
-- (0,1)node{}
(0,0)node{}
-- (1,0)node{}
(2,0)node{}
-- (3,0)node[label={[label distance=0mm]below:$w_c$}]{}
};
\draw[style=dotted, thick]
{
(1,0) -- (2,0)
(-1,0) -- (-2,0)
};
\begin{scope}[decoration={markings, 
    mark=at position 0.58 with {\arrow{Straight Barb[scale=1.5]}}}
    ] 
\draw[postaction={decorate}]{(-3,0) -- (-4,0)};
\draw[postaction={decorate}]{(0,1) -- (0,0)};
\end{scope}
\end{tikzpicture}
\end{subfigure}
    \caption{The states $S[\d0,\d b,c]$ and $S[\d0,\u b,c]$.}
    \label{fig:two_marked_legs}
\end{figure}

\begin{proof}
First, we compute $g(S[\d0,\d b,c])$ by induction on $b$.  Assume the formulas in the proposition hold for all smaller values of $b$ greater than or equal to $2$.  The followers of the state $S[\d0,\d b,c]$, along with their Grundy values, are listed below, with $k$ representing any integer in $\{3,\ldots,b\}$ and $l$ any integer in $\{3,\ldots,c\}$:
\begin{align*}
    S[\d0,\d b(\d1),c] & \cong F_{b-1}\sqcup T_c & g(S[\d0,\d b(\d1),c])& =\ol{b}\oplus (c+1) \\
    S[\d0,\d b(\d2),c] & \cong F_{b-2}\sqcup T_{c+1} & g(S[\d0,\d b(\d2),c])& =\ol{b}\oplus (c+1) \\
    S[\d0,\d b(\d k),c] & \cong F_{b-k}\sqcup S[\d0,\widecheck{k-1},c] & g(S[\d0,\d b(\d k),c])& =\ol b\oplus (c-2)\\
S[\d0,\d b(\u1),c] & \cong C_{b-1}\sqcup R_c & g(S[\d0,\d b(\u1),c])& =\ol{b}\\
    S[\d0,\d b(\u2),c] & \cong C_{b-2}\sqcup T_1\sqcup R_c & g(S[\d0,\d b(\u2),c])& =\ol{b} \\
    S[\d0,\d b(\u k),c] & \cong C_{b-k}\sqcup S[\d0,\widehat{k-1},c] & g(S[\d0,\d b(\u k),c])& =\ol b\oplus (c-2) 
\end{align*}
\begin{align*}
S[\d0,\d b,c(\d1)] & \cong C_b\sqcup T_{c-1} & g(S[\d0,\d b,c(\d1)])& =\ol{b}\oplus (c-2) \\
    S[\d0,\d b,c(\d2)] & \cong C_{b+1}\sqcup T_{c-2} & g(S[\d0,\d b,c(\d2)])& =\ol{b}\oplus (c-2) \\
    S[\d0,\d b,c(\d l)] & \cong S[\d0,\d b,\widecheck{l-1}] \sqcup T_{c-l} & g(S[\d0,\d b,c(\d l)])& =\ol b\oplus \ol l\oplus (c-l)\\
S[\d0,\d b,c(\u1)] & \cong T_b\sqcup T_{c-1} & g(S[\d0,\d b,c(\u1)])& =b\oplus (c-1) \\
    S[\d0,\d b,c(\u2)] & \cong T_b\sqcup T_1\sqcup T_{c-2} & g(S[\d0,\d b,c(\u2)])& =b\oplus (c-1) \\
    S[\d0,\d b,c(\u l)] & \cong S[\d0,\d b,\widehat{l-1}] \sqcup T_{c-l} & g(S[\d0,\d b,c(\u l)])& =\ol b\oplus \ol l\oplus 1\oplus (c-l).
\end{align*}
Altogether, we have 
\begin{align*}
    g(\mathcal{F}(S[\d0,\d b,c]))={} &\{\ol b\oplus 0, \ol b\oplus 1, \ldots, \ol b\oplus (c-3), \ol b\oplus (c-2)\}\\
    &\cup \{\ol b\oplus (c+1),b\oplus (c-1)\}\\
    ={} & \{0,1,\ldots,c-4,c-3,\ol b\oplus (c-2),\ol b\oplus (c+1),b\oplus (c-1)\}.
\end{align*} 
It is clear that none of these Grundy values are equal to $\ol b\oplus (c-1)$.  When $b$ is even, $\ol b\oplus (c-2)=c-2$, and so $\ol b\oplus (c-1)=c-1=\ol b\oplus (c-2)\oplus 1$ is the minimum excluded value.  When $b$ is odd, $\ol b\oplus (c-1)=c-2=\ol b\oplus (c-2)\oplus 1$ is the minimum excluded value.  Therefore, $g(S[\d0,\d b,c])=\ol b\oplus (c-2)\oplus 1$.

The arguments for $S[\d0,\u b,c]$, $S[\d1,\d b,c]$, and $S[\d1,\u b,c]$ are analogous.  
\end{proof}

\begin{proposition}
For $a\in \{0,1\}$, $b,c\ge 2$, and $b,c$ even, $$g(S[\t a,b,c]) =a\oplus (b-2)\oplus (c-2).$$
Furthermore, $S[\t a,0,c] \cong T_{a+c}$, so $g(S[\t a,0,c]) =a+c$.
\end{proposition}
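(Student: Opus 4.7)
The supplementary claim is nearly definitional. When $b = 0$, the graph $S(a+1, 0, c)$ is the path $P_{a+c+2}$ with the hub at an interior vertex, and $S[\t a, 0, c]$ carries a single arrow at one leaf end. The obvious relabeling is a graph isomorphism which is a local state isomorphism at every vertex, so by Corollary~\ref{local_iso} it induces a state isomorphism to $T_{a+c}$ (when $\t a = \h a$) or to $\cev{T}_{a+c}$ (when $\t a = \v a$); in either case $g = a + c$.

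For the main formula, first use the flip isomorphism to reduce to $\t a = \d a$. Proceed by induction on $a$, treating $a = 0$ first and then $a = 1$. In both subcases, the followers of $S[\d a, b, c]$ arise from marking an edge on the $b$- or $c$-leg; when $a = 1$, the sink constraint at $u_1$ additionally forces exactly one $(u)$-follower $S[\d 1(\d 1), b, c]$. This $(u)$-follower is isomorphic to $S[\d 0, b, c]$ via the identity on their (equal) unmarked subgraphs, verified by Theorem~\ref{sufficient}, and so has Grundy value $(b-2) \oplus (c-2)$ by the $a = 0$ subcase. For each $k \in \{1, \ldots, b\}$ and each direction, the $b$-leg follower $S[\d a, b(\d k), c]$ (respectively $S[\d a, b(\u k), c]$) decomposes, by a state isomorphism justified by Theorem~\ref{sufficient} at most vertices and by Theorem~\ref{local_crit} at the boundary vertices $v_{k-1}$ and $v_k$, as the disjoint union of a hub piece $S[\d a, \d(k-1), c]$ (respectively $S[\d a, \u(k-1), c]$) and an outer twig $T_{b-k}$ (respectively $\cev{T}_{b-k}$). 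Hub-piece Grundy values come from Proposition~\ref{gun} when $k - 1 \ge 2$, from the preceding proposition on two short marked legs when $k - 1 \in \{0, 1\}$, and from Proposition~\ref{basic_isos} at borderline cases; the outer twig contributes $b - k$ to the nim sum. The $c$-leg followers are handled symmetrically.

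The remaining step is to read off $g(\mathcal{F}(S[\d a, b, c]))$ and verify that its minimum excluded value is $N := a \oplus (b-2) \oplus (c-2)$. I expect this tabulation-and-mex verification to be the principal obstacle: the roughly $2(b+c)+1$ follower values, organized by $(k, \text{direction}, \text{leg})$, must together cover $\{0, 1, \ldots, N-1\}$ and omit $N$. The parity hypothesis that $b$ and $c$ are both even simplifies the bookkeeping by forcing $\ol{b-2} = \ol{c-2} = 0$, which aligns the parities between the hub-piece and outer-twig contributions. To rule out $N$ appearing, I would mimic the bit-level $\mathrm{ord}$-based contradiction used in the preceding section's proof of the three-marked-legs theorem, deriving a relation between $a$, $(b-2)$, $(c-2)$, and $(k-1)$ (or $(l-1)$) that contradicts the definition of $N$.
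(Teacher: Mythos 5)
Your overall route is the same as the paper's: reduce to $\t a=\d a$ by flipping, decompose each follower on the $b$- or $c$-leg as an outer twig $T_{b-k}$ (or its flip) disjoint-unioned with a smaller marked spider $S[\d a,\widecheck{\widehat{k-1}},c]$, pull the hub-piece values from Proposition~\ref{gun} and the short-leg proposition, note that the single $u$-leg follower in the $a=1$ case is isomorphic to $S[\d 0,b,c]$, and finish with a mex computation. All of that matches the paper, and your treatment of the $b=0$ claim is fine.

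The genuine gap is that you have deferred exactly the step that constitutes the proof: the tabulation of $g(\mathcal{F}(S[\t a,b,c]))$ and the verification that its mex is $N=a\oplus(b-2)\oplus(c-2)$. Moreover, your sketch of that step assigns the bit-level $\mathrm{ord}$ argument to the wrong sub-task. Once the follower values are tabulated (for $a=0$ they come out, after pairing duplicates, as $\{(b-2)\oplus l\mid l=0,\ldots,c-3,\,c+1\}\cup\{(c-2)\oplus l\mid l=0,\ldots,b-3,\,b+1\}$), the \emph{exclusion} of $N$ is immediate, since every element has the form $(b-2)\oplus l$ with $l\ne c-2$ or $(c-2)\oplus l$ with $l\ne b-2$. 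The part that actually requires the $\mathrm{ord}$ argument is \emph{coverage}: if some $n<N$ were missing, then $n\oplus(b-2)\ge c-2$ and $n\oplus(c-2)\ge b-2$; setting $r=\mathrm{ord}(n\oplus(b-2)\oplus(c-2))$, the inequality $n<N$ forces $((b-2)\oplus(c-2))_r=1$ while the other two force $(b-2)_r=(c-2)_r=0$, a contradiction. Without carrying out the tabulation you also cannot see the two ``exceptional'' values $(b-2)\oplus(c+1)$ and $(c-2)\oplus(b+1)$ contributed by the $k=1,2$ followers, which must be checked against $N$ separately; and in the $a=1$ case the extra follower value $(b-2)\oplus(c-2)$ is precisely what shifts the mex to $1\oplus(b-2)\oplus(c-2)$. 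So the plan is sound and parallel to the paper's, but as written the proof stops short of its substantive content.
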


\begin{figure}[H]
\centering
\begin{subfigure}{.45\textwidth}  
\centering
\begin{tikzpicture}[scale=.7]
\draw
{
(0,0)node{}
-- (-1,0)node{}
(-2,0)node{}
-- (-3,0)node[label={[label distance=0mm]left:$v_b$}]{}
(0,0)
-- (0,1)node{}
(0,0)node{}
-- (1,0)node{}
(2,0)node{}
-- (3,0)node[label={[label distance=0mm]right:$w_c$}]{}
};
\draw[style=dotted, thick]
{
(1,0) -- (2,0)
(-1,0) -- (-2,0)
};
\begin{scope}[decoration={markings, 
    mark=at position 0.58 with {\arrow{Straight Barb[scale=1.5]}}}
    ] 
\draw[postaction={decorate}]{(0,1) -- (0,0)};
\end{scope}
\end{tikzpicture}
\end{subfigure}\hspace{1cm}%
\begin{subfigure}{.45\textwidth}  
\centering
\begin{tikzpicture}[scale=.7]
\draw
{
(0,0)node{}
-- (-1,0)node{}
(-2,0)node{}
-- (-3,0)node[label={[label distance=0mm]left:$v_b$}]{}
(0,0)
-- (0,1)node[label={[label distance=0mm]right:$u_1$}]{}
(0,0)node{}
-- (1,0)node{}
(2,0)node{}
-- (3,0)node[label={[label distance=0mm]right:$w_c$}]{}
};
\draw[style=dotted, thick]
{
(1,0) -- (2,0)
(-1,0) -- (-2,0)
};
\begin{scope}[decoration={markings, 
    mark=at position 0.58 with {\arrow{Straight Barb[scale=1.5]}}}
    ] 
\draw[postaction={decorate}]{(0,2)node{} -- (0,1)};
\end{scope}
\end{tikzpicture}
\end{subfigure}
    \caption{The states $S[\d0,b,c]$ and $S[\d1, b,c]$.}
    \label{fig:one_marked_leg}
\end{figure}

\begin{proof}
First, we consider the case $a=0$.  The followers of the state $S[\d 0,b,c]$ with an arrow on the second leg, along with their Grundy values, are listed below, with $k$ representing any integer in $\{3,\ldots,b\}$:
\begin{align*}
    S[\d0,b(\d1),c] & \cong T_{b-1}\sqcup S[\d0,\d0,c] & f(S[\d0,b(\d1),c])& =(b-1)\oplus c \\
    S[\d0,b(\d2),c] & \cong T_{b-2}\sqcup S[\d0,\d1,c] & f(S[\d0,b(\d2),c])& =(b-2)\oplus (c+1) \\
    S[\d0,b(\d k),c] & \cong T_{b-k}\sqcup S[\d0,\widecheck{k-1},c] & f(S[\d0,b(\d k),c])& =(b-k)\oplus (c-2)\oplus \ol k \\
    S[\d0,b(\u1),c] & \cong T_{b-1}\sqcup S[\d0,\u0,c] & f(S[\d0,b(\u1),c])& =(b-1)\oplus 0 \\
    S[\d0,b(\u2),c] & \cong T_{b-2}\sqcup S[\d0,\u1,c] & f(S[\d0,b(\u2),c])& =(b-2) \oplus 1 \\
    S[\d0,b(\u k),c] & \cong T_{b-k}\sqcup S[\d0,\widehat{k-1},c] & f(S[\d0,b(\u k),c])& =(b-k)\oplus (c-2)\oplus \ol k\oplus 1.
\end{align*}
Note that each Grundy value in this list appears twice.  For example, $(b-1)\oplus c=(b-2)\oplus (c+1).$  The Grundy values listed above are therefore $(c-2)\oplus 0, (c-2)\oplus 1, \ldots, (c-2)\oplus (b-3)$, along with $(b-2)\oplus 1, (b-2)\oplus (c+1)$.  Analogously, the Grundy values of the followers of $S[\t 0,b,c]$ with a mark on the third leg are $(b-2)\oplus 0, (b-2)\oplus 1, \ldots, (b-2)\oplus (c-3)$, along with $(c-2)\oplus 1, (c-2)\oplus (b+1)$. Altogether, 
\begin{align*}
g(\mathcal{F}(S[\t0,b,c])= {}& \{(b-2)\oplus l\mid l=0, 1, \ldots , c-3,c+1\}\\ 
& \cup \{(c-2)\oplus l\mid l=0, 1, \ldots , b-3,b+1\}.
\end{align*}

It is clear that none of these elements are equal to $(b-2)\oplus (c-2)$.  Suppose, for contradiction, that some nonnegative integer $n<(b-2)\oplus (c-2)$ is not in $g(\mathcal{F}(S[\t0,b,c])$.  Then $n\oplus (b-2)\ge c-2$ and $n\oplus (c-2)\ge (b-2)$. 
Let $r=\mathrm{ord}(n\oplus (b-2)\oplus (c-2))$.  The first of the three inequalities above implies that $((b-2)\oplus (c-2))_r=1$, the second that  $(c-2)_r=0$, and the third that $(b-2)_r=0$.  Because $((b-2)\oplus (c-2))_r=(b-2)_r\oplus (c-2)_r$, we have a contradiction.  We conclude that $g(\mathcal{F}(S[\t0,b,c])=(b-2)\oplus (c-2)$.

The analysis in the case $a=1$ is similar.  We find that 
\begin{align*}
g(\mathcal{F}(S[\t1,b,c])= {}& \{(b-2)\oplus l\mid l=0, 1, \ldots , c-3,c\}\\ 
& \cup \{(c-2)\oplus l\mid l=0, 1, \ldots , b-3,b\}\\
& \cup \{(b-2)\oplus (c-2)\}.
\end{align*}
It is clear that none of these elements are equal to $(b-2)\oplus (c-2)\oplus 1=(b-1)\oplus (c-2)=(b-2)\oplus (c-1)$. The argument above for the $a=0$ case shows that all nonnegative integers less than $(b-2)\oplus (c-2)$ are elements of $g(\mathcal{F}(S[\t1,b,c])$.  Because $(b-2)\oplus (c-2)\in g(\mathcal{F}(S[\t1,b,c])$, we see that $g(S[\t1,b,c])=1\oplus (b-2)\oplus (c-2)$.
\end{proof}

\section{3-Legged Spiders with Two Marked Legs and One Even Unmarked Leg}

This section uses induction to prove the key technical lemma characterizing the Grundy values of $3$-legged spiders with marks at the ends of two legs, where the third, unmarked leg has even length.  

\begin{lemma}
For any even $a$, $b$, and $c$ with $a+b\ge 0$ and $c\ge 2$:
\begin{enumerate}
    \item $g(S[\d a,\u b,c])\le c-2$ and is even. 
    \item If $g(S[\d a,\u b,c])<c-2$, then $g(S[\d a,\d b,c])=g(S[\d a,\u b,c])$.  
    \item If $g(S[\d a,\u b,c])=c-2$, then $g(S[\d a,\d b,c])\in \{c-1,c\}$. 
    \item $g(S[\d a,\widecheck{\widehat{b+1}} ,c]) = g(S[\d a,\du b,c])\op 1$,\\ 
    $g(S[\widecheck{a+1},\du b ,c]) = g(S[\d a,\du b,c])\op 1$, and\\
    $g(S[\widecheck{a+1},\widecheck{\widehat{b+1}} ,c]) = g(S[\d a,\du b,c])$.
\end{enumerate}
\end{lemma}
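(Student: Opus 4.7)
The plan is to prove all four parts simultaneously by strong induction on $a + b$, with $c$ treated as a parameter. The base cases $a, b \in \{0, 1\}$ and $a \in \{0, 1\}, b \ge 2$ are covered by the explicit formulas already established in the preceding propositions (Section 8), which give $g(S[\d a,\d b,c])$, $g(S[\d a,\u b,c])$, and the corresponding odd-leg variants needed for part (4). For the inductive step, fix even $a, b \ge 2$ and even $c \ge 2$, and enumerate the followers of the relevant states, classifying each by which leg receives the new arrow.

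A follower with an arrow placed on leg~1 at position $i$ (direction $\d$ or $\u$) decomposes as $F_{a-i}$ or $C_{a-i}$ disjoint-union a strictly smaller spider state of the form $S[\widecheck{i-1},\du b,c]$ or $S[\widehat{i-1},\du b,c]$; the analogous statement holds for leg~2. When $i-1$ is even, the Grundy value follows from parts (1)--(3) of the induction hypothesis; when $i-1$ is odd, part~(4) of the induction hypothesis supplies the $\op 1$ correction. A follower with an arrow placed on leg~3 at position $l$ decomposes as $S[\d a,\du b,\widetilde{l-1}] \sqcup T_{c-l}$ or $\sqcup \cev{T}_{c-l}$, and the Grundy value of the three-marked-legs factor comes directly from the theorem of Section~9 (with no induction needed). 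Combining these via the nim-sum formula $g(X\sqcup Y) = g(X)\op g(Y)$ produces the full multiset of follower Grundy values.

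I would establish part~(4) first, as it is the cleanest structurally: flipping an end-marker direction induces a canonical bijection on followers which either preserves the Grundy value (when the new arrow is placed far from the flipped mark) or shifts it by $\op 1$ (when the new arrow splits off a path component whose $F$/$C$ type swaps), so the mex values must agree or differ by $\op 1$ exactly as stated. With part~(4) in hand at the current level, I would turn to parts~(1)--(3) by analyzing the multiset of follower Grundy values of $S[\d a,\u b,c]$; parity patterns force the mex to be even, and a pigeonhole-style argument using the results of Section~9 and parts (1)--(3) at smaller parameters forces the mex to lie at or below $c - 2$. To compare $S[\d a,\u b,c]$ and $S[\d a,\d b,c]$ for parts (2)--(3), I would exploit the fact that their follower sets differ only in the third-leg moves near the hub whose legality depends on whether placing an inward arrow would turn the hub into a sink, and this controlled discrepancy is exactly what produces the stated $\{c-1, c\}$ alternative in part~(3).

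The main obstacle will be the tight boundary case in part~(3), where $g(S[\d a,\u b,c]) = c-2$: here one must verify not only that $\{0, 1, \ldots, c-2\}$ all appear as Grundy values of followers of $S[\d a,\d b,c]$, but also the precise condition distinguishing when the mex equals $c-1$ versus $c$. This is where the full strength of the inductive hypothesis (all four parts at strictly smaller $a + b$) is indispensable, and where the case analysis is most delicate, since small shifts in the nim-sum structure of the leg-1 and leg-2 follower contributions can tip the mex between the two possible values.
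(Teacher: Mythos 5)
Your overall framework --- induction on the leg lengths, enumerating followers leg by leg, decomposing each follower as a flow/crash/twig disjoint-union a smaller spider state, and deriving part (4) from a shift of the entire follower Grundy-value set by $\op 1$ --- is the same skeleton the paper uses. But your sketch for parts (2)--(3) rests on a misconception about where the two states differ. The states $S[\d a,\u b,c]$ and $S[\d a,\d b,c]$ differ in the direction of the arrow on the \emph{leaf edge} of the second leg, i.e.\ at $\{v_b,v_{b+1}\}$, which for $a,b\ge 2$ is nowhere near the hub; the hub has all three incident edges unmarked in both states, so no single move at the hub is ever blocked by a sink/source condition, and the third-leg follower sets of the two states are identical as sets of moves. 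The actual discrepancy is global, not local: every leg-1 and leg-2 follower of $S[\d a,\d b,c]$ decomposes into a path component together with a smaller spider still carrying the $\d b$ accent, whose Grundy value differs from its $\u b$ counterpart according to parts (2)--(4) of the inductive hypothesis. The paper's proof of parts (2)--(3) therefore has to walk through \emph{every} follower and translate it via the inductive relations, concluding that the two follower sets contain the same even values below $c-2$ (each paired with the odd value one greater), and that $g(\mathcal{F}(S[\d a,\d b,c]))$ contains $c-2$ but not $c$. Your proposed ``controlled discrepancy near the hub'' would not generate this comparison.

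The second gap is that the crux of part (1) --- why the mex of $g(\mathcal{F}(S[\d a,\u b,c]))$ is even and at most $c-2$ --- is left to ``parity patterns'' and ``a pigeonhole-style argument.'' The paper's argument identifies a specific trichotomy for follower values: each is either odd, or lies below $c-2$ and belongs to a pair with nim sum $1$ (so that an even value is present iff its odd successor is), or equals $c-1$ or $c$; the third-leg followers with $m\in\{1,2\}$ must be computed explicitly to land in the last class. Without isolating this structure the parity claim does not follow. Two smaller points: part (3) only asserts membership in $\{c-1,c\}$, so you do not need to ``distinguish when the mex equals $c-1$ versus $c$'' (the paper never does); and since the lemma quantifies over even $a,b$, the base cases are $a=0$ or $b=0$ (via the Section~8 propositions), with the odd-index claims folded into part (4) rather than treated as separate base cases.
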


\begin{figure}[H]
\centering
\begin{subfigure}{.45\textwidth}  
\centering
\begin{tikzpicture}[scale=.65]
\draw
{
(0,0)node{}
-- (-1,0)node{}
(-2,0)node{}
-- (-3,0)node[label={[label distance=0mm]below:$u_a$}]{}
-- (-4,0)node{}
(0,0)
-- (0,-1)node{}
(0,-2)node{}
-- (0,-3)node[label={[label distance=0mm]below:$w_c$}]{}
(0,0)node{}
-- (1,0)node{}
(2,0)node{}
-- (3,0)node[label={[label distance=0mm]below:$v_b$}]{}
-- (4,0)node{}
};
\draw[style=dotted, thick]
{
(1,0) -- (2,0)
(-1,0) -- (-2,0)
(0,-1) -- (0,-2)
};
\begin{scope}[decoration={markings, 
    mark=at position 0.58 with {\arrow{Straight Barb[scale=1.5]}}}
    ] 
\draw[postaction={decorate}]{(-4,0) -- (-3,0)};
\draw[postaction={decorate}]{(3,0) -- (4,0)};
\end{scope}
\end{tikzpicture}
\end{subfigure}\hspace{1cm}%
\begin{subfigure}{.45\textwidth}  
\centering
\begin{tikzpicture}[scale=.65]
\draw
{
(0,0)node{}
-- (-1,0)node{}
(-2,0)node{}
-- (-3,0)node[label={[label distance=0mm]below:$u_a$}]{}
-- (-4,0)node{}
(0,0)
-- (0,-1)node{}
(0,-2)node{}
-- (0,-3)node[label={[label distance=0mm]below:$w_c$}]{}
(0,0)node{}
-- (1,0)node{}
(2,0)node{}
-- (3,0)node[label={[label distance=0mm]below:$v_b$}]{}
-- (4,0)node{}
};
\draw[style=dotted, thick]
{
(1,0) -- (2,0)
(-1,0) -- (-2,0)
(0,-1) -- (0,-2)
};
\begin{scope}[decoration={markings, 
    mark=at position 0.58 with {\arrow{Straight Barb[scale=1.5]}}}
    ] 
\draw[postaction={decorate}]{(-4,0) -- (-3,0)};
\draw[postaction={decorate}]{(4,0) -- (3,0)};
\end{scope}
\end{tikzpicture}
\end{subfigure}
    \caption{The states $S[\d a,\u b,c]$ and $S[\d a,\d b,c]$.}
    \label{fig:two_long_marked_legs}
\end{figure}

\begin{proof}
We use induction on $a$ and $b$.  The base cases are those with $a=0$ or $b=0$.  It suffices to check the first of these.  We have $g(S[\d 0,\u b,c])=c-2$ and $g(S[\d 0,\d b,c])=c-1$ by Proposition \ref{gun}.  By the same proposition, 
\begin{align*}
    g(S[\d 1,\u b, c])=g(S[\d 0,\widehat{b+1},c]) & =c-1=g(S[\d 0,\u b,c])\op 1,\\     
    g(S[\d 1,\d b,c]) =g(S[\d 0,\widecheck{b+1},c]) & =c-2=g(S[\d 0,\d b,c])\op 1, \\
    g(S[\d 1,\widehat{b+1},c]) & =c-2=g(S[\d 0,\u b,c]),\\
    g(S[\d 1,\widecheck{b+1},c]) & =c-1=g(S[\d 0,\d b,c]).
\end{align*}

Now, assume $a,b\ge 2$, and assume all four statements are true when either $a$ or $b$ is replaced by any smaller even number.  First, we prove part (1).  We have
\begin{align*}
    g(S[\d a(\d a),\u b,c])  =g(S[\widecheck{a-1},\u b,c])
     =g(S[\widecheck{a-2},\u b,c])\op 1, 
\end{align*}
which is odd.  

If $m<a$ is odd, 
\begin{equation}\label{first}
    \begin{aligned}
    g(S[\d a(\d m),\u b,c]) & =g(S[\widecheck{m-1},\u b,c])\op 1, \\
    g(S[\d a(\u m),\u b,c]) & =g(S[\widehat{m-1},\u b,c]).
    \end{aligned}
\end{equation}
If $m<a$ is even, 
\begin{equation}\label{second}
    \begin{aligned}
    g(S[\d a(\d m),\u b,c]) & =g(S[\widecheck{m-1},\u b,c])
    =g(S[\widecheck{m-2},\u b,c])\op 1,\\
    g(S[\d a(\u m),\u b,c]) & =g(S[\widehat{m-1},\u b,c])\op 1
    =g(S[\widehat{m-2},\u b,c]).
    \end{aligned}
\end{equation}

\noindent By the inductive assumptions, $g(S[\d a(\d m),\u b,c])$ must be odd.  Together, these two numbers must be either less than $c-2$ with nim sum $1$, or both equal to $c-1$, or equal to $c-1$ and $c$ in that order. 

The analogous results hold for $g(S[\d a, \u b(\u b), c])$, and for $g(S[\d a,\u b(\u m),c])$ and $g(S[\d a,\allowbreak \u b(\d m),c])$ when $m<b$.

Finally, $g(S[\d a,\u b,c(\du m)]) =g(S[\d a,\u b,\widecheck{\widehat{m-1}}])\op (c-m).$ When $m\ge 3$, both numbers (obtained from choosing either of the accents on $m$) are odd.  When $m$ is $1$ or $2$, both numbers are still odd:
\begin{align*}
    g(S[\d a,\u b,c(\du 1)])& =g(S[\d a,\u b,\du0])\op (c-1)=c-1, \\
    g(S[\d a,\u b,c(\du 2)])& =g(S[\d a,\u b,\du1])\op (c-2)=c-1.
    \end{align*}
    
We have seen that elements of $g(\mathcal{F}(S[\d a,\u b,c]))$ are either odd, or are less than $c-2$ and belong to a pair of elements with nim sum $1$, or are equal to $c-1$ or $c$.  Therefore, the minimum excluded value of this set is an even number less than or equal to $c-2$.

Next, we prove parts (2) and (3).  We have
\begin{align*}
    g(S[\d a(\d a),\d b,c])  =g(S[\widecheck{a-1},\d b,c]) 
     =g(S[\widecheck{a-2},\d b,c])\op 1.
\end{align*}
If $g(S[\widecheck{a-2},\u b,c])<c-2$, then the number above equals $g(S[\widecheck{a-2},\u b,c])\op 1$, which is odd.  If $g(S[\widecheck{a-2},\u b,c])=c-2$, then the number above equals $c-2$ or $c+1$. 

If $m<a$, 
\begin{align*}
    g(S[\d a(\d m),\d b,c]) & =g(S[\widecheck{m-1},\d b,c])\op \ol m, \\
    g(S[\d a(\u m),\d b,c]) & =g(S[\widehat{m-1},\d b,c])\op \ol m\op 1.
\end{align*}
If $g(S[\d a(\d m),\u b,c])<c-2$, then we know from equations \ref{first} and \ref{second} that \linebreak $g(S[\widecheck{\widehat{m-1}}, \ud b,c])<c-2$. It follows by induction that $g(S[\widecheck{\widehat{m-1}}, \du b,c])=g(S[\widecheck{\widehat{m-\nolinebreak 1}}, \ud b,c])$.  Therefore, in this case, $$g(S[\d a(\d m),\d b,c])=g(S[\d a(\u m),\d b,c])\op 1=g(S[\d a(\d m),\u b,c])=g(S[\d a(\u m),\u b,c])\op 1.$$ 

If $g(S[\d a(\d m),\u b,c])=c-1$, then $g(S[\d a(\u m),\d b,c])\allowbreak =c-2$, and $g(S[\d a(\d m),\d b,c])$ is equal to $c-2$ or $c+1$.  Note in particular that $g(S[\d a(\u1),\d b,c])=g(S[\u0,\d b,c])=c-2$.

The analogous results hold for $g(S[\d a, \d b(\d b), c])$, and for $g(S[\d a,\d b(\u m),c])$ and $g(S[\d a,\allowbreak \d b(\d m),c])$ when $m<b$.

Finally, $g(S[\d a,\d b,c(\du m)])=g(S[\d a,\d b,\widecheck{\widehat{m-1}}])\op (c-m).$  When $m\ge 3$, both numbers are odd.  When $m$ is $1$ or $2$, both numbers are equal to $c-2$:
\begin{align*}
    g(S[\d a,\d b,c(\du 1)])& =g(S[\d a,\d b,\du0])\op (c-1)=c-2, \\
    g(S[\d a,\d b,c(\du 2)])& =g(S[\d a,\d b,\du1])\op (c-2)=c-2.
\end{align*}

In summary, $g(\mathcal{F}(S[\d a,\d b,c]))\cap \{0,\ldots,c-3\}$ and $g(\mathcal{F}(S[\d a,\u b,c]))\cap \{0,\ldots,c-3\}$ contain exactly the same even numbers, and for each such even number both sets contain the odd number that is one greater.  Moreover, $g(\mathcal{F}(S[\d a,\d b,c]))$ contains $c-2$ but not $c$.  It follows that the minimum excluded value of $g(\mathcal{F}(S[\d a,\d b,c]))$ is either the same as that of $g(\mathcal{F}(S[\d a,\u b,c]))$, when $g(S[\d a,\u b,c])<c-2$, or is equal to $c-1$ or $c$, when $g(S[\d a,\u b,c])=c-2$.  

To conclude, we prove part (4), beginning with $g(S[\d a,\widecheck{\widehat{b+1}} ,c]) = g(S[\d a,\du b,\allowbreak c]) \op 1$.  For $m\le a$, 
\begin{align*}
    g(S[\d a(\d m),\widecheck{\widehat{b+1}} ,c]) 
    & = g(S[\widecheck{m-1},\widecheck{\widehat{b+1}} ,c])\op \ol m \\ 
    & =g(S[\widecheck{m-1},\du b,c])\op \ol m\op 1\\
    & = g(S[\d a(\d m),\du b,c])\op 1,\\
\end{align*}
and for $m<a$, 
\begin{align*}
    g(S[\d a(\u m),\widecheck{\widehat{b+1}} ,c]) 
    & =g(S[\widehat{m-1},\widecheck{\widehat{b+1}} ,c])\op \ol m\op 1\\
    & =g(S[\widehat{m-1},\du b,c])\op \ol m\\
    & = g(S[\d a(\u m),\du b,c])\op 1.
\end{align*}
Similarly, for $m\le b$, 
\begin{align*}
    g(S[\d a,\widecheck{\widehat{b+1}}(\ud m),c]) 
    & = g(S[\d a,\widehat{\widecheck{m-1}},c])\op \ol m \hspace{-2.7em}&& = g(S[\d a,\du b(\ud m),c])\op 1,\\
    g(S[\d a,\widecheck{\widehat{b+1}}(\du m),c]) 
    & = g(S[\d a,\widecheck{\widehat{m-1}},c])\op \ol m\op 1 \hspace{-2.7em}&& = g(S[\d a,\du b(\du m),c])\op 1.
\end{align*}
Furthermore, $g(S[\d a, \widecheck{\widehat{b+1}}(\widecheck{\widehat{b+1}}),c]) = g(S[\d a,\du b,c]).$
Finally, 
\begin{align*}
    g(S[\d a,\widecheck{\widehat{b+1}},c(\d m)]) 
    & = g(S[\d a,\widecheck{\widehat{b+1}},\widecheck{m-1})])\op (c-m) \\
    & = g(S[\d a,\du b,\widecheck{m-1})])\op 1\op (c-m) \\
    & = g(S[\d a,\du b,c(\d m)])\op 1, 
\end{align*}
and similarly for $g(S[\d a,\widecheck{\widehat{b+1}},c(\u m)])$.

In summary, $g(\mathcal{F}(S[\d a,\widecheck{\widehat{b+1}} ,c]))=\left(g(\mathcal{F}(S[\d a,\du b ,c]))\op 1\right)\cup \{g(S[\d a,\du b,c])\}.$  Whether $g(S[\d a,\du b,c])$ is even or odd, this set equality implies that $$g(S[\d a,\widecheck{\widehat{b+1}},c]))=g(S[\d a,\du b,c]))\op 1.$$
The proof that $g(S[\widecheck{a+1},\du b ,c]) = g(S[\d a,\du b,c])\op 1$ is analogous.

Similar calculations show that 
\begin{align*}
g(\mathcal{F}(S[\widecheck{a+1},\widecheck{\widehat{b+1}} ,c])) & =\left(g(\mathcal{F}(S[\d a,\du b ,c]))\right)\cup \{g(S[\widecheck{a+1},\du b,c]),g(S[\d a,\widecheck{\widehat{b+1}},c])\}\\
& =\left(g(\mathcal{F}(S[\d a,\du b ,c]))\right)\cup \{g(S[\d a,\du b,c])\op 1\}.
\end{align*} 
Therefore, $g(S[\widecheck{a+1},\widecheck{\widehat{b+1}} ,c]) = g(S[\d a,\du b,c])$.
\end{proof}

\section{3-Legged Spiders with One Marked Leg and Two Even Unmarked Legs}

We are now prepared to compute the parity of the Grundy value of a $3$-legged spider with just a single mark, where the two unmarked legs have even lengths.    

\begin{lemma}\label{even_odd}
Suppose $a$, $b$, $c$, and $g(S[\t a,b,c])$ are even.  Then $g(S[\widetilde {a+1},b,c])$ is odd.  
\end{lemma}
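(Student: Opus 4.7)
The plan is to compute $g(S[\widetilde{a+1},b,c])$ by enumerating all followers of $S[\widetilde{a+1},b,c]$ and relating each one, via state isomorphism, to either a follower of $S[\t a,b,c]$ or to $S[\t a,b,c]$ itself. By analogy with part~(4) of the key lemma in Section~10, the target relationship is
$$g\bigl(\mathcal{F}(S[\widetilde{a+1},b,c])\bigr) = \bigl(g(\mathcal{F}(S[\t a,b,c])) \oplus 1\bigr) \cup \{g(S[\t a,b,c])\},$$
from which $g(S[\widetilde{a+1},b,c]) = g(S[\t a,b,c]) \oplus 1$ would follow, and the latter is odd by hypothesis.

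First, I would enumerate followers of $S[\widetilde{a+1},b,c]$ by the leg containing the new mark. For a follower adding a mark at position $m \in \{1,\ldots,a\}$ on leg~1 (strictly between the hub and the existing leaf arrow), I would apply Corollary~\ref{local_iso} to establish an isomorphism $S[\widetilde{a+1}(\t m),b,c] \cong S[\t(m-1),b,c] \sqcup F_{a+1-m}$ or $S[\widetilde{a+1}(\t m),b,c] \cong S[\t(m-1),b,c] \sqcup C_{a+1-m}$, depending on whether the new arrow is same- or opposite-directed to the existing leaf arrow. Since $a$ is even, $\overline{a+1-m} = \overline{a-m} \oplus 1$, so the Grundy value of this follower differs from that of the analogous follower of $S[\t a,b,c]$ by an XOR with~$1$. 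Followers on legs~2 and~3 are handled analogously: decompose into a spider piece whose first leg has one more unmarked edge plus a twig, flow, or crash piece, and use the parity flip induced by extending leg~1 by a single unmarked edge.

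Second, the crucial special follower arises when the new arrow is placed at position $m = a+1$, adjacent to the existing leaf arrow. The source/sink constraint at $u_{a+1}$ forces the new arrow's direction to align with the leaf arrow's direction (one incoming and one outgoing edge at $u_{a+1}$). I would then apply Theorem~\ref{local_crit} to verify that the resulting state is isomorphic to $S[\t a,b,c]$ itself, since the unmarked edge set and the sink/source constraints on the hub-side portion of leg~1 and on legs~2 and~3 are identical in the two states. This follower therefore contributes exactly one additional Grundy value, $g(S[\t a,b,c])$, to the set of follower Grundy values.

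Combining these contributions yields the target set equality, and a mex calculation completes the argument: the global XOR by~$1$ shifts the mex from the even value $g(S[\t a,b,c])$ to the odd value $g(S[\t a,b,c]) \oplus 1$, and adjoining the even value $g(S[\t a,b,c])$ itself does not lower this odd mex since every odd value below $g(S[\t a,b,c]) \oplus 1$ is already present in the XORed set. The main obstacle is carefully verifying all the state isomorphisms with their direction-dependence, particularly the special isomorphism at $m = a+1$ and the ones for followers on legs~2 and~3, where the extra unmarked edge on leg~1 must uniformly introduce the parity flip. A secondary concern is handling anomalous small cases (followers with $m \in \{1,2\}$ or adjacent to the hub), analogous to the boundary cases that appeared in the proof of the Section~10 lemma.
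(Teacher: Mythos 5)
Your proposal is correct and follows essentially the same route as the paper: the same set identity $g(\mathcal{F}(S[\widetilde{a+1},b,c]))=\bigl(g(\mathcal{F}(S[\t a,b,c]))\oplus 1\bigr)\cup\{g(S[\t a,b,c])\}$, obtained by the same leg-by-leg decomposition (flow/crash pieces on leg one with the forced-direction follower at $m=a+1$ giving the extra value $g(S[\t a,b,c])$, and part (4) of the Section~10 lemma supplying the parity flip for legs two and three), followed by the same mex computation. The only point the paper settles explicitly that you flag but defer is the degenerate case $b=0$ or $c=0$, where the state is a twig and the claim is immediate.
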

\begin{proof}
If $b$ or $c$ is $0$, then $S[\t a,b,c]$ is just a twig and the result is immediate, so we may assume that $b,c\ge 2$.

Beginning on the first leg, we have 
$g(S[\widecheck{a+1}(\widecheck{a+1}),b,c]) =g(S[\d a,b,c]).$ For $m\le a$, 
\begin{align*}
g(S[\widecheck{a+1}(\d m),b,c]) & =g(S[\widecheck{m-1},b,c])\op \ol m\op 1 \hspace{-2.8em} && = g(S[\d a(\d m),b,c])\op 1,\\
g(S[\widecheck{a+1}(\u m),b,c]) & =g(S[\widehat{m-1},b,c])\op \ol m \hspace{-2.8em} && =g(S[\d a(\u m),b,c])\op 1.
\end{align*}

We proceed to the second leg.  For $m\le b$,
\begin{align*}
    g(S[\widecheck{a+1},b(\du m),c]) & =g(S[\widecheck{a+1},\widecheck{\widehat{m-1}},c])\op (b-m) \\
    & =g(S[\d a,\widecheck{\widehat{m-1}},c])\op (b-m)\op 1 \\
    & =g(S[\d a,b(\du m),c])\op 1.
\end{align*}

These calculations demonstrate that $$g(\mathcal{F}(S[\widecheck{a+1},b,c]))=\left(g(\mathcal{F}(S[\d a,b,c]))\op 1\right)\cup g(S[\d a,b,c]).$$
Because the even $g(S[\d a,b,c])$ was the minimum excluded value of $g(\mathcal{F}(S[\d a,b,c]))$, the set equality above shows that $g(S[\d a,b,c])+1$ is the minimum excluded value of $g(\mathcal{F}(S[\widecheck{a+1},b,c]))$.  Therefore, $g(S[\widecheck{a+1},b,c])$ is odd.  
\end{proof}

\begin{proposition}\label{penultimate}
For any $a$, and for any even $b$ and $c$, $\ol {g(S[\t a,b,c])}=\ol a$.
\end{proposition}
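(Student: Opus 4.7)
The plan is to proceed by induction on $a$, exploiting Lemma~\ref{even_odd} for one half of the inductive step and mirroring its proof for the other.

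The base cases $a\in\{0,1\}$ follow immediately from Section~9: for $b,c\ge 2$, the formula $g(S[\t a,b,c])=a\oplus(b-2)\oplus(c-2)$ has parity $\overline{a}$ since $b$ and $c$ are even, and when $b=0$ (or symmetrically $c=0$) the isomorphism $S[\t a,0,c]\cong T_{a+c}$ gives Grundy value $a+c$, again of parity $\overline{a}$.

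For the inductive step with $a\ge 2$, assume the claim for $a-1$. If $a-1$ is even, then $g(S[\widetilde{a-1},b,c])$ is even by hypothesis, and Lemma~\ref{even_odd} (applied with its ``$a$'' set to $a-1$) immediately yields $g(S[\t a,b,c])$ odd, matching $\overline{a}=1$. If $a-1$ is odd, I would mirror the follower-by-follower computation from Lemma~\ref{even_odd} to establish the set equality
\[
g(\mathcal{F}(S[\t a,b,c]))=\bigl(g(\mathcal{F}(S[\widetilde{a-1},b,c]))\oplus 1\bigr)\cup\{g(S[\widetilde{a-1},b,c])\}.
\]
Letting $\mu=g(S[\widetilde{a-1},b,c])$, which is odd by the inductive hypothesis, a short parity analysis of the mex closes the step: since $\mu$ is odd, $\{0,\dots,\mu-1\}\oplus 1=\{0,\dots,\mu-2\}\cup\{\mu\}$, so adjoining $\mu$ leaves $\mu-1$ as the smallest value missing from $g(\mathcal{F}(S[\t a,b,c]))$, forcing the mex to be the even number $\mu-1$. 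This gives $\overline{g(S[\t a,b,c])}=0=\overline{a}$.

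The main obstacle is producing the set equality in the odd-$(a-1)$ case. Lemma~\ref{even_odd}'s derivation hinged on the identity $g(S[\widecheck{a+1},\widecheck{\widehat{m-1}},c])=g(S[\d a,\widecheck{\widehat{m-1}},c])\oplus 1$, which it extracted from part~(4) of the two-marked-legs lemma under the assumption that the first-leg length is even. For odd $a-1$, I would supply a companion to that part~(4) tailored to an odd-length marked first leg, proved by the same kind of parity-tracked induction used to establish the original four-part lemma; once the companion identity is in hand, the remainder of Lemma~\ref{even_odd}'s proof transfers essentially verbatim to the odd case and closes the induction.
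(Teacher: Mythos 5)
Your base case and the odd-$a$ half of the induction (applying Lemma~\ref{even_odd} when $a-1$ is even) match the paper. The gap is in the even-$a$ step. The ``companion identity'' you propose to supply --- in effect $g(S[\widecheck{a+2},\du b,c])=g(S[\widecheck{a+1},\du b,c])\oplus 1$ for even $a,b$, equivalently that $g(S[\d a,\du b,c])$ does not depend on the even first-leg length $a$ --- is false, so the set equality you build on it cannot be established by follower-by-follower mirroring. Concretely, Proposition~\ref{gun} gives $g(S[\d 0,\u 2,c])=c-2$, but a direct check shows that \emph{every} follower of $S[\d 2,\u 2,c]$ has odd Grundy value (each decomposes, via the isomorphisms of Sections 5--8, into a disjoint union with value $c-1$ when $c=4$, and similarly for larger even $c$), so $g(S[\d 2,\u 2,c])=0<c-2$ for $c\ge 4$; the companion identity would instead predict $g(S[\d 2,\u 2,c])=g(S[\d 1,\u 2,c])\oplus 1=c-2$. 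This is exactly the phenomenon that parts (2) and (3) of the Section~10 lemma exist to accommodate: the value $g(S[\d a,\u b,c])$ genuinely drops below $c-2$ as $a$ and $b$ grow. At the level of this proposition, the mismatch surfaces in second-leg followers: for $a=2$ and $b\ge 4$ even, $g(S[\d 2,b(\u 3),c])=(b-3)\oplus 0=b-3$, whereas $g(S[\d 1,b(\u 3),c])\oplus 1=(b-3)\oplus(c-2)$, and these differ for $c\ge 4$.

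The paper's proof of the even-$a$ case takes a structurally different route that avoids needing any exact value: rather than comparing $g(\mathcal{F}(S[\d a,b,c]))$ with $g(\mathcal{F}(S[\widecheck{a-1},b,c]))$, it pairs followers of $S[\d a,b,c]$ \emph{with each other} --- the two orientations at each position $m$ on each leg --- and uses parts (1)--(4) of the Section~10 lemma to show that every even element of $g(\mathcal{F}(S[\d a,b,c]))$ is accompanied by the odd element one greater (the two members of each pair are either both odd or have nim sum $1$). That forces the mex to be even without ever identifying it. To repair your argument you would need either this internal-pairing idea or a correct replacement for the false companion identity; as written, the odd-$(a-1)$ step does not go through.
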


\begin{figure}[H]
\centering
\begin{tikzpicture}[scale=.7]
\draw
{
(0,0)node{}
-- (-1,0)node{}
(-2,0)node{}
-- (-3,0)node[label={[label distance=0mm]left:$v_b$}]{}
(0,0)
-- (0,1)node{}
(0,2)node{}
-- (0,3)node[label={[label distance=0mm]left:$u_a$}]{}
(0,0)node{}
-- (1,0)node{}
(2,0)node{}
-- (3,0)node[label={[label distance=0mm]right:$w_c$}]{}
};
\draw[style=dotted, thick]
{
(1,0) -- (2,0)
(-1,0) -- (-2,0)
(0,1) -- (0,2)
};
\begin{scope}[decoration={markings, 
    mark=at position 0.58 with {\arrow{Straight Barb[scale=1.5]}}}
    ] 
\draw[postaction={decorate}]{(0,4)node{} -- (0,3)};
\end{scope}
\end{tikzpicture}
    \caption{The state $S[\d a, b,c]$.}
    \label{fig:one_long_marked_leg}
\end{figure}

\begin{proof}
If $b$ or $c$ is $0$, then $S[\t a,b,c]$ is just a twig and the result is immediate, so we may assume that $b,c\ge 2$.

We use induction on $a$.  We know that $g(S[\t 0,b,c])=(b-2)\op (c-2)$, which is even, so the result holds for $a=0$. 

Whenever the result holds for $a$ even, it also holds for $a+1$, by Lemma \ref{even_odd}. 

Suppose $a$ is even and the result holds for all smaller $a$-values.  In order to show that $g(S[\t a,b,c])$ is even, we must show that for any even $n\in g(\mathcal{F}(S[\t a,b,c]))$, we also have $n+1\in g(\mathcal{F}(S[\t a,b,c]))$.

Beginning on the first leg, we have $g(S[\d a(\d a),b,c])=g(S[\widecheck{a-1},b,c])$, which is odd. Furthermore, for $m<a$,
\begin{align*}
    g(S[\d a(\d m),b,c]) & = g(S[\widecheck{m-1},b,c])\op \ol m, \\
    g(S[\d a(\u m),b,c]) & = g(S[\widehat{m-1},b,c])\op \ol m\op 1,
\end{align*}
two numbers with a nim sum of $1$.  

We proceed to the second leg. When $m\le b$ is even, \begin{align*}
    g(S[\d a,b(\u m),c]) & = g(S[\d a,\widehat{m-1},c])\op (b-m) \\
    & = g(S[\d a,\widehat{m-2},c])\op (b-m)\op 1,
\end{align*}
which is odd.  Also, 
\begin{align*}
    g(S[\d a,b(\d m),c]) & =g(S[\d a,\widecheck{m-1},c])\op (b-m) \\
    & = g(S[\d a,\widecheck{m-2},c])\op (b-m)\op 1,
\end{align*}
which either is also odd or is equal to $g(S[\d a,\allowbreak \widehat{m-2},\allowbreak c])\op (b-m)$.

When $m$ is odd, $g(S[\d a,b(\u m),c]) = g(S[\d a,\widehat{m-1},c])\op (b-m),$ which is odd.  Also, 
$g(S[\d a,b(\d m),c]) = g(S[\d a,\widecheck{m-1},c])\op (b-m),$ which either is also odd or is equal to $g(S[\d a,\widehat{m-1},c])\op (b-m) \op 1$.

The same reasoning applies to the third leg.  

Combining these results, we see that for any even $n\in g(\mathcal{F}(S[\t a,b,c]))$, we also have $n+1=n\op 1\in g(\mathcal{F}(S[\t a,b,c]))$. 
\end{proof}

\section{The Trimmed Game of Arrows on a 3-Legged Spider with Even Legs}

We conclude by proving our main result.  

\begin{theorem}
For any even $a$, $b$, and $c$ with $abc\ne 0$, the empty state $S[a,b,c]$ of the spider graph $S(a,b,c)$ has Grundy value $0$.   
\end{theorem}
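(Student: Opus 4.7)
My plan is to apply the Sprague--Grundy recursion directly: I will show that every follower of $S[a,b,c]$ has odd (hence nonzero) Grundy value, so $g(S[a,b,c]) = \mathrm{mex}\, g(\mathcal{F}(S[a,b,c])) = 0$. By the symmetry among the three legs, it suffices to analyze followers obtained by marking an edge $\{u_{j-1}, u_j\}$ of leg 1, for some $j \in \{1, \ldots, a\}$, in either direction; such a move produces the state $S[a(\widetilde j), b, c]$.

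The central claim is the decomposition
\[
S[a(\widetilde j), b, c] \;\cong\; S[\widetilde{j-1}, b, c] \;\sqcup\; \widetilde T_{a-j},
\]
where the second factor is $T_{a-j}$ or $\cev T_{a-j}$ according to the direction of the marked arrow, and where the $j = a$ case is trivial since $T_0$ is terminal with Grundy value $0$. The isomorphism is induced by the graph isomorphism between unmarked subgraphs that is the identity on the stub spider $S(j-1,b,c)$ and relabels the detached path $u_j,\ldots,u_a$ as the path underlying the twig. By Corollary \ref{local_iso}, it suffices to verify that the induced arrow bijection is a local state isomorphism at every vertex of the unmarked subgraph; the only nontrivial checks occur at the boundary vertices $u_{j-1}$ and $u_j$, where both the original state and the decomposed state impose the same virtual-arrow constraint (incoming at $u_{j-1}$ and outgoing at $u_j$, or vice versa). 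This verification is closely parallel to the decomposition arguments already carried out in Proposition \ref{basic_isos}.

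Granting the isomorphism, Grundy additivity together with $g(T_{a-j}) = g(\cev T_{a-j}) = a - j$ gives
\[
g(S[a(\widetilde j), b, c]) \;=\; g(S[\widetilde{j-1}, b, c]) \oplus (a - j).
\]
Because $b$ and $c$ are even, Proposition \ref{penultimate} applies and yields $\ol{g(S[\widetilde{j-1}, b, c])} = \ol{j-1}$, so the follower's Grundy value has parity $\ol{j-1} \oplus \ol{a-j} = \ol{a-1} = 1$, i.e.\ is odd. The same conclusion holds for followers on legs 2 and 3 by symmetric arguments, so every follower of $S[a,b,c]$ has odd Grundy value and $g(S[a,b,c]) = 0$. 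The main obstacle is the formal verification of the decomposition isomorphism, but this is a routine application of the Local Criterion (Theorem \ref{local_crit}) modeled closely on the verifications in Proposition \ref{basic_isos}; once that step is in hand, the remainder of the proof is a single parity computation.
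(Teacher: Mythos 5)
Your proposal is correct and follows essentially the same route as the paper: both decompose each follower as $S[\widetilde{m-1},b,c]\sqcup T_{a-m}$ (up to flip), apply Grundy additivity, and invoke Proposition \ref{penultimate} to conclude that every follower has odd Grundy value, forcing $g(S[a,b,c])=0$. Your extra care in justifying the decomposition isomorphism via the Local Criterion is a reasonable elaboration of a step the paper treats as routine.
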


\begin{proof}
Let $m$ be any element of $\{1,2\ldots,a\}$.  Because $S[a(\t m),b,c]\cong T_{a-m}\sqcup S[\widetilde{m-1},b,c]$, we have $g(S[a(\t m),b,c])=(a-m)\oplus g(S[\widetilde{m-1},b,c]).$ By Proposition \ref{penultimate}, this implies that $\ol{g(S[a(\t m),b,c])}=\ol{a-m}\oplus \ol {m-1}=1.$
It follows that every follower of $S[a,b,c]$ has odd Grundy value.  As a result, $g(S[a,b,c])=0$.
\end{proof}

\noindent This theorem establishes that the second player has a winning strategy in the Trimmed Game of Arrows on any $3$-legged spider graph with legs of even length.  

The trimming of a spider graph with legs of odd length is either a spider graph with legs of even length, or a disjoint union of two path graphs (when exactly one leg has length one), or the empty graph (when all three legs have length one).  It follows that the second player has a winning strategy in the Game of Arrows on any $3$-legged spider graph with legs of odd length.  

\section*{Acknowledgments}
I would like to thank Sharon McCathern and Kathryn Tickle for their feedback on earlier drafts of this paper.   

\bibliographystyle{elsarticle-num}
\bibliography{Cycles3.bib}

\begin{thebibliography}{1}
\expandafter\ifx\csname url\endcsname\relax
  \def\url#1{\texttt{#1}}\fi
\expandafter\ifx\csname urlprefix\endcsname\relax\def\urlprefix{URL }\fi
\expandafter\ifx\csname href\endcsname\relax
  \def\href#1#2{#2} \def\path#1{#1}\fi

\bibitem{alvarado2020}
R.~Alvarado, M.~Averett, B.~Gaines, C.~Jackson, M.~L. Karker, M.~A. Marciniak,
  F.~Su, S.~Walker, The game of cycles, The American Mathematical Monthly
  128~(10) (2021) 868--887.
\newblock \href {https://doi.org/10.1080/00029890.2021.1979715}
  {\path{doi:10.1080/00029890.2021.1979715}}.

\bibitem{su2020}
F.~Su, C.~Jackson, Mathematics for Human Flourishing, Yale University Press,
  2020.

\bibitem{gr1939}
P.~M. Grundy, Mathematics and games, Eureka 2 (1939) 6--8.

\bibitem{sp1936}
R.~Sprague, \"{U}ber mathematische kampfspiele, Tohoku Mathematical Journal,
  First Series 41 (1936) 438--444.

\bibitem{demaine2009}
E.~Demaine, R.~Hearn, Playing games with algorithms: Algorithmic combinatorial
  game theory, in: M.~Albert, R.~Nowakowski (Eds.), Games of No Chance 3,
  Vol.~56 of Mathematical Sciences Research Institute Publications, Cambridge
  University Press, 2009, pp. 3--56.
\newblock \href {https://doi.org/10.1017/CBO9780511807251.002}
  {\path{doi:10.1017/CBO9780511807251.002}}.

\bibitem{schleicher2006}
D.~Schleicher, M.~Stoll, An introduction to {C}onway's games and numbers,
  Moscow Mathematical Journal 6 (2006) 359--388.
\newblock \href {https://doi.org/10.17323/1609-4514-2006-6-2-359-388}
  {\path{doi:10.17323/1609-4514-2006-6-2-359-388}}.

\end{thebibliography}

\end{document}